\newcommand{\ra}{\rightarrow}
\newcommand{\PP}{\mathbb P}
\newcommand{\cO}{\mathcal{O}}
\newcommand{\IM}{\mbox{im}}
\newcommand{\Hom}{\mbox{Hom}}
\theoremstyle{plain}
\newtheorem{theorem}{Theorem}[section]
\newtheorem{lem}[theorem]{Lemma}
\newtheorem{prop}[theorem]{Proposition}
\newtheorem{cor}[theorem]{Corollary}
\theoremstyle{definition}
\newtheorem{rem}[theorem]{Remark}
\newtheorem{ex}[theorem]{Example}
\numberwithin{equation}{section}
\begin{document}
\title[Clifford indices]{Vector bundles of rank 2 computing Clifford indices}

\author{H. Lange}
\author{P. E. Newstead}

\address{H. Lange\\Department Mathematik\\
              Universit\"at Erlangen-N\"urnberg\\
              Cauerstra\ss e $11$\\
              D-$91058$ Erlangen\\
              Germany}
              \email{lange@mi.uni-erlangen.de}
\address{P.E. Newstead\\Department of Mathematical Sciences\\
              University of Liverpool\\
              Peach Street, Liverpool L69 7ZL, UK}
\email{newstead@liv.ac.uk}

\thanks{Both authors are members of the research group VBAC (Vector Bundles on Algebraic Curves). The second author 
would like to thank the Department Mathematik der Universit\"at 
         Erlangen-N\"urnberg for its hospitality}
\keywords{Semistable vector bundle, Clifford indices, gonality}
\subjclass[2000]{Primary: 14H60; Secondary: 14F05, 32L10}

\begin{abstract}
Clifford indices of vector bundles on algebraic curves were introduced in a previous paper of the authors.
In this paper we study bundles of rank 2 which compute these Clifford indices. This is of particular interest 
in the light of recently discovered counterexamples to a conjecture of Mercat.
\end{abstract}
\maketitle

\section{Introduction}\label{intro}

Let $C$ be a smooth irreducible curve of genus $g\ge4$ defined over an algebraically closed field of characteristic zero. 
In describing the geometry of $C$, an important r\^ole is played by the classical {\em Clifford index} $\gamma_1$, 
which may be defined as follows. For any line bundle $L$ of degree $d_L$ on $C$, consider $\gamma(L):=d_L-2(h^0(L)-1)$. 
Then we define
$$
\gamma_1:=\min_L\{\gamma(L)\;|\;h^0(L)\ge 2,h^1(L)\ge2\}
$$
or equivalently
$$
\gamma_1:=\min_L\{\gamma(L)\;|\;h^0(L)\ge 2,d_L\le g-1\}.
$$
A line bundle $L$ satisfying the conditions of the first definition is said to {\em contribute} to $\gamma_1$; if in 
addition $\gamma(L)=\gamma_1$, then $L$ is said to {\em compute} $\gamma_1$.

In a previous paper \cite{cl}, we introduced Clifford indices for vector bundles, 
generalising the classical definition, as follows.
For any vector bundle $E$ of rank $r_E$ and degree $d_E$ on $C$, consider
$$
\gamma(E) := \frac{1}{r_E} (d_E - 2(h^0(E) -r_E)) = \mu(E) -2\frac{h^0(E)}{r_E} + 2.
$$
We then define:
$$
\gamma_n := \min_{E} \left\{ \gamma(E) \left|  
\begin{array}{c}   E \;\mbox{semistable of rank}\; n, \\
h^0(E) \geq n+1,\; \mu(E) \leq g-1
\end{array} \right\} \right.
$$
and
$$
\gamma_n' := \min_{E} \left\{ \gamma(E) \;\left| 
\begin{array}{c} E \;\mbox{semistable of rank}\; n, \\
h^0(E) \geq 2n,\; \mu(E) \leq g-1
\end{array} \right\}. \right.
$$
Note that $\gamma_1$ is just the classical Clifford index of the curve $C$ and $\gamma_1'=\gamma_1$. We say that $E$ 
{\it contributes to} 
$\gamma_n$ (respectively $\gamma_n'$) if $E$ is semistable of rank $n$ with $\mu(E) \leq g-1$ and $h^0(E) \geq n+1$ 
(respectively $h^0(E) \geq 2n$). If in addition 
$\gamma(E) = \gamma_n$ (respectively $\gamma(E) = \gamma_n'$), we say that $E$ {\it computes} $\gamma_n$ 
(respectively $\gamma_n'$).

Our object in the present paper is to study the bundles which compute $\gamma_2$ or $\gamma_2'$. In order to describe our results, we need some further definitions.
The {\em gonality sequence} 
$d_1,d_2,\ldots,d_r,\ldots$ of $C$ is defined by 
$$
d_r := \min \{ d_L \;|\; L \; \mbox{a line bundle on} \; C \; \mbox{with} \; h^0(L) \geq r +1\}.
$$
We have always $d_r<d_{r+1}$ and $d_{r+s}\le d_r+d_s$; in particular $d_n\le nd_1$ for all $n$ (see \cite[Section 4]{cl}). 
We say that $d_r$ {\em computes} $\gamma_1$ if $d_r \leq g-1$ and $d_r-2r=\gamma_1$ and that $C$ has {\em Clifford dimension} $r$ if $r$ is 
the smallest integer for which $d_r$ computes $\gamma_1$.

Following a section of preliminaries, we proceed in Section \ref{spc} to consider curves of Clifford dimension $2$, in other 
words smooth plane curves. In this case, we can describe all the bundles computing either $\gamma_2$ or $\gamma_2'$ 
(Propositions \ref{pr3.4}, \ref{pr3.5} and \ref{prop3.3}). In Section \ref{exc}, we consider curves of Clifford 
dimension $\ge3$; these are also known as {\em exceptional curves} (see \cite{elms}). We determine all the bundles 
which compute $\gamma_2$ (Theorem \ref{thm4.3} and Proposition \ref{prop4.4}). We are not able to determine all 
bundles computing $\gamma_2'$ except when $r=3$ (Proposition \ref{prop4.4}) or $r=4$ (Proposition \ref{prop4.6}), 
but we do describe how they arise when $r\ge5$ and $g=4r-2$ (it is conjectured that all exceptional curves have 
genus $4r-2$). 

An interesting by-product of this investigation is that, for $r\ge4$, we have $\gamma_2'<\gamma_1$, 
yielding further counterexamples to Mercat's conjecture in rank $2$ (see proposition \ref{propmer}) to add to those already described in \cite{fo} 
and \cite{ln2}. In particular Proposition \ref{prop4.6} and Theorem \ref{thm4.9} give the following theorem.

\begin{theorem} \label{thm1.1}
Let $C$ be a curve of Clifford dimension $r \geq 4$ of genus $g = 4r-2$.
Then there exists a stable bundle $E$ of rank $2$ and degree $\leq 4r-3$ on $C$ with $h^0(E) \geq 4$. In particular 
$$
\gamma'_2 < \gamma_1.
$$ 
\end{theorem}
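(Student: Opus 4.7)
The plan is to derive Theorem~\ref{thm1.1} as an immediate consequence of Proposition~\ref{prop4.6} (case $r=4$) and Theorem~\ref{thm4.9} (case $r\geq 5$), each of which produces a stable rank~$2$ bundle $E$ with $h^0(E)\geq 4$ and $d_E\leq 4r-3$. The assertion $\gamma_2'<\gamma_1$ then follows by a short numerical computation once the arithmetic of the hypothesis is pinned down. Since $C$ has Clifford dimension $r$ and $g=4r-2$, the condition $d_r\leq g-1=4r-3$ together with the minimality in the definition of the Clifford dimension forces $d_r=4r-3$, the computing line bundle $L$ to satisfy $h^0(L)=r+1$, and hence $\gamma_1=d_r-2r=2r-3$. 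As a by-product the strict inequality $d_s-2s>\gamma_1$ for $s<r$ gives $d_1\geq 2r$. With $E$ in hand,
$$\gamma(E) = \frac{1}{2}\bigl(d_E - 2(h^0(E)-2)\bigr) \leq \frac{1}{2}\bigl((4r-3)-4\bigr) = \frac{4r-7}{2} < 2r-3 = \gamma_1,$$
so $\gamma_2'\leq \gamma(E)<\gamma_1$.

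The substance of the argument lies in the construction of $E$, which in both cases takes $L$ as input. The approach I would take is to realise $E$ as a non-split extension
$$0 \to A \to E \to B \to 0,$$
with $d_A+d_B\leq 4r-3$, where $B$ is $L$ itself or a small twist $L(-D)$ contributing at least three sections to $E$ and $A$ is a low-degree line bundle providing the fourth. The cohomological long exact sequence yields $h^0(E)\geq 4$ as soon as the extension class lies in the kernel of a suitable cup-product map $H^0(B)\to H^1(A)$; that this kernel is large is ensured by the abundance of sections of $L$ guaranteed by $h^0(L)=r+1\geq 5$.

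The main obstacle, and the reason the two ranges of $r$ are treated separately, is stability. A putative destabilising line subbundle $A'\subset E$ has $\mu(A')\geq d_E/2$; in the typical case where $A'$ also contributes sections, one gets $h^0(A')\geq 2$, which combined with the gonality bound $d_1\geq 2r$ and the finer lower bounds $d_s\geq 2s+\gamma_1+1=2s+2r-2$ forces the degree of $A'$ to be too large to be a subbundle of $E$. The remaining cases (in which $A'$ has few sections) are handled by analysing the extension class directly and invoking, when $r\geq 5$, the structure of exceptional curves of genus $4r-2$. Once stability is established, the numerical computation above closes the proof.
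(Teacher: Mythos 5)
Your reduction of the theorem to Proposition~\ref{prop4.6} ($r=4$) and Theorem~\ref{thm4.9} ($r\ge5$), and the closing numerical computation $\gamma(E)\le\frac{4r-7}{2}<2r-3=\gamma_1$, are exactly what the paper does and are correct. But the substance of the theorem is the existence of $E$, and there your proposal has a genuine gap. You propose to build $E$ as a non-split extension $0\to A\to E\to B\to 0$ with $B$ essentially equal to $L$ and $A$ of low degree, and you assert that an extension class killing enough of the cup-product map $H^0(B)\to H^1(A)$ exists because of ``the abundance of sections of $L$.'' This is precisely the point that cannot be waved through: a stable rank-$2$ bundle with $h^0\ge4$ and $\gamma(E)<\gamma_1$ is by definition a counterexample to Mercat's conjecture, and such bundles are conjectured \emph{not} to exist on general curves even though those curves have line bundles with many sections. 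The locus of extension classes $e$ for which $\cup e$ has corank $\ge 4-h^0(A)$ on $H^0(B)$ is a determinantal locus whose non-emptiness (away from $e=0$, which gives an unstable direct sum) requires a genuine geometric input; nothing in your sketch supplies it, and your stability discussion is likewise only a description of what would need to be checked.

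The paper's construction is different in kind. For $r\ge5$ it counts quadrics through $C\subset\PP^r$ (at least $\binom{r-2}{2}$ of them, since $\dim S^2H^0(H)=\frac{1}{2}(r+1)(r+2)$ while $h^0(H^2)\le 4r-2$), deduces that $C$ lies on a quadric of rank exactly $5$, and uses an $(r-3)$-plane on that quadric avoiding $C$ (projecting from the vertex if necessary) to produce a $3$-dimensional subspace $W\subset H^0(H')$ generating $H'$ such that $\ker\bigl(W\otimes H^0(H')\to H^0(H'^2)\bigr)$ has dimension $\ge\bigwedge^2W+1=4$ (Lemma~\ref{lem4.8}). The bundle is then the evaluation (syzygy) bundle $0\to E^*\to W\otimes\cO_C\to H'\to0$ twisted by $H'$, with $H^0(E)\simeq N$; it is not presented as an extension of $L$ by a line bundle, and its stability is immediate because it is generated with $h^0(E^*)=0$, so every quotient line bundle has $h^0\ge2$ and hence degree $\ge d_1=2r>\frac{1}{2}d_E$. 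For $r=4$ the paper instead invokes the K3 surface containing $C$ and \cite[Remark 3.4]{gmn}. In short: your outer frame is right, but the key existence mechanism --- the rank-$5$ quadric for $r\ge5$ and the K3 input for $r=4$ --- is absent from your argument and is not replaceable by a genericity or abundance heuristic.
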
 

In Section \ref{htt}, we start the investigation of curves of Clifford dimension $1$ by looking at hyperelliptic, 
trigonal and tetragonal curves. The most general result that we obtain is 

\begin{theorem} \label{thm1.2}
Let $C$ be a general tetragonal curve of genus $g \geq 8$.
 Then the tetragonal line bundle $Q$ is unique and 
the only bundles computing $\gamma_2$ are
\begin{enumerate}
\item $Q \oplus Q$;
\item possibly non-trivial extensions
\begin{equation*} 
0 \ra Q \ra E \ra K \otimes Q^* \ra 0
\end{equation*}
with $h^0(E) = h^0(Q) + h^0(K \otimes Q^*) = g-1$. 
\end{enumerate}
In particular $\gamma_2 = \gamma'_2 = \gamma_1$. When $g \geq 27$, bundles of type $(2)$ do not exist.\\
{\em (See Theorem \ref{thm5.11} and Remarks \ref{rem5.12} and \ref{rem5.15}.)}
\end{theorem}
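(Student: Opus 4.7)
The plan is to first pin down $\gamma_2$, then classify the semistable bundles that achieve it, and finally analyse the residual extension problem to obtain the $g \ge 27$ statement.

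\textbf{Step 1 (pinning down $\gamma_2$).} For a general tetragonal curve of genus $g \ge 8$, classical Brill--Noether theory on $W^1_4$ gives uniqueness of the $g^1_4$, hence uniqueness of $Q$, and $\gamma_1 = d_1 - 2 = 2$. Since $Q \oplus Q$ is semistable with $\gamma(Q \oplus Q) = 2$, we have $\gamma_2 \le 2$. The reverse inequality $\gamma_2 \ge \gamma_1$ is Mercat's conjecture, known in this range from results cited in \cite{cl} and the preliminaries. Thus $\gamma_2 = \gamma_2' = \gamma_1 = 2$, and any bundle $E$ computing $\gamma_2$ satisfies $d_E = 2 h^0(E)$.

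\textbf{Step 2 (classification).} Let $E$ be semistable of rank $2$ with $h^0(E) \ge 3$, $\mu(E) \le g-1$ and $\gamma(E) = 2$. If $E$ is strictly semistable, then $E$ is $S$-equivalent to $L_1 \oplus L_2$ with $\mu(L_1) = \mu(L_2) = \mu(E)$, and a short calculation gives $\gamma(L_1) + \gamma(L_2) = 2\gamma(E) = 4$. Using Clifford's theorem, $\mu(E) \le g-1$, and the non-existence of a $g^1_2$ on a non-hyperelliptic curve, one deduces $h^0(L_i) \ge 2$, hence $\gamma(L_i) \ge \gamma_1 = 2$, with equality. Each $L_i$ therefore computes $\gamma_1$; uniqueness of $Q$ then forces $L_1 \cong L_2 \cong Q$ and $E \cong Q \oplus Q$. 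If $E$ is stable, I would pick a line subbundle $L \hookrightarrow E$ with $h^0(L) \ge 2$ of maximal slope (using the structural results of \cite{cl} together with $h^0(E) \ge 3$). Writing $M = E/L$, the sequence $0 \to L \to E \to M \to 0$ yields
\[
\gamma(L) + \gamma(M) \le 2 \gamma(E) = 4.
\]
A case analysis according to whether $d_M \le g-1$ or $d_M \ge g$, combined with the uniqueness of $Q$ applied to $L$ or to $K \otimes M^{*}$ respectively, and the fact that stability forbids $L \cong M$, forces $L \cong Q$ and $M \cong K \otimes Q^{*}$. Comparing degrees gives $d_E = 2g-2$, $\mu(E) = g-1$, $h^0(E) = g-1$, and the equality $h^0(E) = h^0(Q) + h^0(K \otimes Q^{*})$ is then automatic.

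\textbf{Step 3 (the bound $g \ge 27$).} The condition $h^0(E) = g-1$ is equivalent to vanishing of the coboundary $\partial \colon H^0(K \otimes Q^{*}) \to H^1(Q)$, i.e.\ of cup product with the extension class $e \in \mathrm{Ext}^1(K \otimes Q^{*}, Q) = H^1(K^{-1} \otimes Q^{2})$. By Serre duality, the subspace of such $e$ is dual to the cokernel of the multiplication map
\[
H^0(K \otimes Q^{*}) \otimes H^0(K \otimes Q^{*}) \lra H^0(K^{2} \otimes Q^{-2}).
\]
Surjectivity of this map is analysed via the base-point-free pencil trick applied to $|Q|$ and reduces to the vanishing of $H^0$ of a specific line bundle whose degree becomes negative precisely for $g \ge 27$. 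For such $g$ the only extension with $h^0(E) = g-1$ is the trivial one, yielding $Q \oplus K \otimes Q^{*}$, which is not semistable for $g \ge 8$; hence no bundle of type $(2)$ exists.

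\textbf{Main obstacle.} The stable case of Step~$2$ is the crux: one must produce a line subbundle $L \hookrightarrow E$ with $h^0(L) \ge 2$ of the right slope and exploit the tight interplay between $\gamma(L)$, $\gamma(M)$, and $\gamma(K \otimes M^{*})$ together with the uniqueness of $Q$ to identify both $L$ and $M$. Step~$3$ is more technical than conceptually difficult, the main delicate point being the precise identification of the threshold $g = 27$ via the pencil trick.
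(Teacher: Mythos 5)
Your overall architecture (pin down $\gamma_2$, reduce to extensions whose sub- and quotient line bundles compute $\gamma_1$, then analyse a multiplication map) matches the paper's, but three steps would not go through as written. First, ``$\gamma_2\ge\gamma_1$ is Mercat's conjecture'' is a misattribution: Mercat's conjecture in rank $2$ is equivalent to $\gamma_2'=\gamma_1$ (Proposition \ref{propmer}) and does not force $\gamma_2\ge\gamma_1$ (indeed $\gamma_2<\gamma_1$ for plane curves and for general curves of genus $\ge7$). What is actually needed is \eqref{eq4.1}, $\gamma_2=\min\{\gamma_1,\frac{d_2}{2}-1\}$, together with $d_2\ge7$ for the general tetragonal curve of genus $\ge8$ (a $g^2_6$ would compute $\gamma_1$, contrary to the genericity statement that only the $g^1_4$ does); this also eliminates bundles with $h^0(E)=3$, which you never explicitly exclude. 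Second, in the strictly semistable case you pass from ``$E$ is $S$-equivalent to $Q\oplus Q$'' to ``$E\cong Q\oplus Q$'', silently discarding non-split extensions $0\to Q\to E\to Q\to 0$ with $h^0(E)=4$. Ruling these out is exactly where the paper uses Lemma \ref{mult}: such an extension with all sections lifting exists if and only if $h^0(Q^2)\ge4$, and $h^0(Q^2)=3$ because $d_3\ge9$. Relatedly, the line subbundle with $h^0\ge2$ in the stable case --- which you correctly identify as the crux --- cannot simply be ``picked''; its existence requires the Paranjape--Ramanan lemma (Lemma \ref{lempr}): if no such subbundle exists and $h^0(E)=2+s$, then $d_E\ge d_{2s}$, which together with $d_E=2s+4$ and \eqref{eq0} is contradictory for $g\ge6$ (Lemma \ref{lem5.6}).

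Third, your Step 3 is not viable as described. The base-point-free pencil trick computes the kernel of multiplication against a \emph{pencil}; it gives no handle on the surjectivity of $H^0(K\otimes Q^*)\otimes H^0(K\otimes Q^*)\to H^0(K^2\otimes Q^{*2})$, where $h^0(K\otimes Q^*)=g-3$ is large, and there is no single line bundle whose degree changes sign at $g=27$ controlling the cokernel. The paper instead checks that $K\otimes Q^*$ is very ample (via $h^0(K\otimes Q^*(-p-q))=h^0(K\otimes Q^*)-2$, using $h^0(Q(p+q))=h^0(Q)=2$) and then invokes the Green--Lazarsfeld projective normality theorem \cite[Theorem 2(a)]{gl}, whose numerical hypothesis is what produces the (probably non-optimal) threshold $g\ge27$. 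Your reduction of the lifting condition to non-surjectivity of this multiplication map is correct (it is Lemma \ref{l2.3}), as is the observation that the split extension $Q\oplus(K\otimes Q^*)$ is not semistable.
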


In Section \ref{kgonal} we study $k$-gonal curves for $k\ge5$ and prove a similar result to Theorem \ref{thm1.2} 
(Theorem \ref{thm6.2} and Corollary \ref{cor6.3}).

Section 7 concerns general curves. For such curves it is conjectured (see \cite{fo}) that $\gamma'_2 = \gamma_1$ and 
this is certainly true for $g \leq 16$. We work out the possible bundles computing $\gamma'_2$ under this assumption
(see Theorem \ref{prop7.5}).

In the final section we consider curves with $\gamma'_2 < \gamma_1$. Examples of such curves are known for all genera $g \geq 11$
(see \cite{fo}, \cite{ln2} and Theorem \ref{thm1.1} above). In this case we show that all bundles computing $\gamma'_2$ 
are stable with $h^0(E) = 2 + s, \; s \geq 2$, and do not possess a line subbundle with $h^0 \geq 2$ (we refer to such 
bundles as bundles of type PR). We show that $s \leq \gamma'_2 - \frac{\gamma_1}{2}$ (Proposition \ref{prop8.1}).
In one case we get an almost complete description of the bundles computing $\gamma'_2$.

\begin{theorem} {\em (Theorem 8.3)}
Suppose $\gamma'_2 < \gamma_1$ and $d_4 = 2 \gamma'_2 + 4$. Then the set of bundles 
of type PR with $s=2$ which compute $\gamma'_2$ is in bijective correspondence with the set of line bundles
$$
U(d_4,5) := \left\{ M \;{\Big |}\; \begin{array}{c}
                      d_M = d_4,\quad  h^0(M) = 5,\\
                      S^2H^0(M) \ra H^0(M^2) \; \mbox{not injective}
                      \end{array} \right\}.
$$
\end{theorem}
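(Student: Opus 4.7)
The correspondence is $E \mapsto M := \det E$.

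\emph{Forward direction.} Let $E$ be a bundle of type PR with $s=2$ computing $\gamma'_2$: then $h^0(E)=4$, and $\gamma(E)=\gamma'_2$ forces $d_E = 2\gamma'_2+4 = d_4$, so $d_M = d_4$. Choose a basis $s_1,\ldots,s_4$ of $H^0(E)$ and set $m_{ij} := s_i \wedge s_j \in H^0(M)$, assembling into a linear map $\alpha \colon \wedge^2 H^0(E) \to H^0(M)$. The PR property is equivalent to $s \wedge t \neq 0$ for any linearly independent $s,t \in H^0(E)$, so $\ker \alpha$ contains no non-zero decomposable vector. Since the Grassmannian $\operatorname{Gr}(2,H^0(E)) \subset \PP(\wedge^2 H^0(E)) = \PP^5$ is a quadric hypersurface, every projective subspace of $\PP^5$ of positive dimension meets it by B\'ezout; hence $\dim \ker\alpha \leq 1$ and $h^0(M) \geq 5$. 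Combined with $h^0(M) \leq 5$ (since $d_M = d_4 < d_5$), this gives $h^0(M) = 5$, and $\alpha$ is surjective with one-dimensional kernel spanned by some $\kappa$.

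\emph{Non-injectivity of $S^2 H^0(M) \to H^0(M^2)$.} The classical Pl\"ucker identity for $4$ vectors in a rank-$2$ space, applied fibrewise, yields
\[
m_{12}m_{34} - m_{13}m_{24} + m_{14}m_{23} = 0 \quad \text{in } H^0(M^2).
\]
We show the left-hand side, as an element of $S^2 H^0(M)$, is non-zero: it is the image under the surjection $S^2\alpha\colon S^2 \wedge^2 H^0(E) \to S^2 H^0(M)$ of
\[
\pi := (s_1\wedge s_2)\cdot(s_3\wedge s_4) - (s_1\wedge s_3)\cdot(s_2\wedge s_4) + (s_1\wedge s_4)\cdot(s_2\wedge s_3) \in S^2\wedge^2 H^0(E).
\]
The kernel of $S^2\alpha$ equals $\kappa \cdot \wedge^2 H^0(E)$; every element of it is a single symmetric product $\kappa \cdot v$, and hence has rank at most $2$ as a symmetric bilinear form on $(\wedge^2 H^0(E))^*$. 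A direct computation in the basis $\{s_i \wedge s_j\}_{i<j}$ shows that $\pi$, as such a form, has rank $6$. Therefore $\pi \not\in \ker S^2\alpha$, yielding the required non-zero element of $\ker(S^2 H^0(M) \to H^0(M^2))$, and so $M \in U(d_4,5)$.

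\emph{Inverse and bijectivity.} Given $M \in U(d_4,5)$ and a non-zero $\beta \in \ker(S^2 H^0(M) \to H^0(M^2))$, viewed as a quadric in $\PP H^0(M)^* \cong \PP^4$ containing $\phi_M(C)$, one applies the Mukai/Green--Lazarsfeld construction attached to such a quadric to produce a rank-$2$ bundle $E$ with $\det E = M$, and then verifies that $E$ is stable (forced by its computing $\gamma'_2$), of type PR, and has $h^0(E) = 4$. Bijectivity is then obtained by recovering $\beta$ (up to scalar) from $E$ as its Pl\"ucker element computed in the forward direction; this simultaneously shows that $\dim \ker(S^2 H^0(M) \to H^0(M^2)) = 1$ and that $E$ is determined by $M$ up to isomorphism.

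\emph{Main obstacle.} The forward direction reduces to Pl\"ucker calculations and B\'ezout. The genuine difficulty lies in the inverse: producing $E$ from $(M,\beta)$, controlling $h^0(E)$ exactly (not merely $\geq 4$), establishing stability and the PR property, and proving independence from the choice of $\beta$. The key technical point is establishing that $\ker(S^2 H^0(M) \to H^0(M^2))$ is exactly one-dimensional in this setting, which makes the correspondence canonical.
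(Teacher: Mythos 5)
Your forward direction is correct and is genuinely more self-contained than the paper's treatment: the paper proves only that $d_E = 2\gamma_2'+4 = d_4 < 2d_1$ and that $E$ is stable, and then invokes \cite[Theorem 3.2 and Remark 3.4]{gmn} for the entire correspondence. Your Pl\"ucker-quadric/B\'ezout argument for $h^0(\det E)\ge 5$ (recovering Lemma \ref{lempr} for $s=2$), together with $h^0(\det E)\le 5$ from $d_4<d_5$, and the rank computation showing that the Pl\"ucker relation survives in $S^2H^0(M)$, is a clean direct proof that $\det E\in U(d_4,5)$.

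The genuine gap is the inverse direction, which is where all the content of the cited result of \cite{gmn} lives and which you explicitly leave as a plan rather than a proof. Concretely, you do not: (a) carry out the construction of $E$ from a quadric $\beta$ through $\phi_M(C)$ (this requires knowing that every nonzero $\beta$ in the kernel has rank $5$ --- a rank $\le 4$ quadric would force a decomposition $M\simeq L_1\otimes L_2$ with $h^0(L_i)\ge 2$, which is excluded precisely by $d_M=d_4<2d_1$, an inequality you never verify and which is the one hypothesis the paper does check before citing \cite{gmn}); (b) prove semistability of the constructed $E$, without which you cannot conclude that $E$ computes $\gamma_2'$ --- your parenthetical ``stable (forced by its computing $\gamma_2'$)'' is circular at this point, since in the inverse direction you do not yet know $E$ contributes to $\gamma_2'$ at all; and (c) establish injectivity and well-definedness, i.e.\ that $\dim\ker(S^2H^0(M)\to H^0(M^2))=1$ or at least that different choices of $\beta$ yield isomorphic bundles. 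Your closing remark that recovering $\beta$ from $E$ ``simultaneously shows'' one-dimensionality of the kernel does not follow: it shows only that the $\beta$'s arising from bundles form a distinguished set, not that every element of the kernel arises this way nor that two independent elements cannot both arise. Note that once (a) and (b) are done, a shortcut is available for the rest of (b): semistability plus $h^0(E)\ge 4$ plus $d_E=2\gamma_2'+4$ already force $\gamma(E)=\gamma_2'$ and hence $h^0(E)=4$ exactly, and then stability and the PR property follow from Proposition \ref{prop8.1}; but points (a) and (c) still require the substantive arguments of \cite{gmn}.
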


It is interesting to note that the condition that $S^2H^0(M) \ra H^0(M^2)$ be not injective can be restated in terms of
Koszul cohomology and that there are close connections between the problems discussed here and the maximal rank conjecture (see \cite{fo}). Our results also have implications for the non-emptiness of higher rank Brill-Noether loci, but we have not developed this here because we have no ``unexpected'' results for general curves.\\ 

We suppose throughout that $C$ is a smooth irreducible curve of genus $g\ge4$ defined over an algebraically closed field of characteristic zero and that $K$ denotes the canonical 
line bundle on $C$.

\section{Preliminaries}\label{prelim}

In this section, we recall a number of results from \cite{cl} and \cite{ln} and prove some additional lemmas.
First recall from \cite[Corollary 5.3, Lemma 2.2 and Theorem 5.2]{cl} that
\begin{equation} \label{eq4.1}
\gamma_2 = \min \left\{ \gamma_1, \frac{d_2}{2} - 1 \right\} \quad \mbox{and} \quad 
\gamma_1 \geq \gamma'_2 \geq \min \left\{ \gamma_1, \frac{d_4}{2} - 2 \right\}.
\end{equation} 
Also \cite[Lemma 4.6]{cl}
\begin{equation}\label{eq0}
d_r\ge\min\{\gamma_1+2r,g+r-1\}
\end{equation}

Next we have from \cite{ln}
\begin{lem}\label{lem1}
Any bundle computing $\gamma_2$ or $\gamma_2'$ is generated.
\end{lem}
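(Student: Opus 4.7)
\medskip
\noindent\textbf{Proof plan.} We argue by contradiction: suppose $E$ is a rank-$2$ bundle computing $\gamma_2$ (or $\gamma_2'$) but is not generated. Let $F$ be the image of the evaluation map $H^0(E) \otimes \cO_C \to E$. Then $F$ is a proper subsheaf of $E$ that is torsion-free, hence locally free on the smooth curve $C$; moreover, every section of $E$ factors through $F$, so $h^0(F) = h^0(E)$, and consequently $d_F < d_E$. The proof then splits according to the rank of $F$.

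Case 1 ($\rk F = 1$). Let $L$ be the saturation of $F$ in $E$, a line subbundle with $h^0(L) \geq h^0(F) = h^0(E) \geq 3$. Semistability of $E$ gives $d_L \leq \mu(E) \leq g-1$, so $L \oplus L$ is semistable of rank $2$ with $\mu(L \oplus L) = d_L \leq g-1$ and $h^0(L \oplus L) = 2 h^0(L) \geq 6$; hence $L \oplus L$ contributes to both $\gamma_2$ and $\gamma_2'$, and $\gamma(L \oplus L) = \gamma(L)$. A direct computation using $d_E \geq 2 d_L$ and $h^0(L) \geq h^0(E)$ yields
\[
\gamma(E) - \gamma(L) \;=\; \tfrac{1}{2} d_E - d_L + 2 h^0(L) - h^0(E) \;\geq\; h^0(E) \;\geq\; 3,
\]
contradicting $\gamma(E) \leq \gamma(L \oplus L) = \gamma(L)$.

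Case 2 ($\rk F = 2$). Here $F$ is a locally free rank-$2$ subsheaf of $E$ with $d_F < d_E$, $h^0(F) = h^0(E)$, and generated by its global sections. If $F$ is semistable, then $F$ itself contributes to $\gamma_2$ (or $\gamma_2'$) and satisfies $\gamma(F) < \gamma(E)$, contradicting minimality of $\gamma(E)$. If $F$ is not semistable, let $L \subset F$ be the maximal destabilizing line subbundle and $M = F/L$. Since $L \hookrightarrow E$ is a line subbundle, $d_L \leq \mu(E)$ by semistability of $E$, while $d_L > d_M$ by destabilization; furthermore, $M$ is a generated line bundle, so either $M = \cO_C$ (when $d_M = 0$) or $h^0(M) \geq 2$ (when $d_M > 0$). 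Combining $h^0(L) + h^0(M) \geq h^0(F) = h^0(E)$ with these constraints, in each subcase at least one of $L, M$ contributes to $\gamma_1$, and a Clifford-type estimate parallel to Case 1 (together with $\gamma_2 \leq \gamma_1$ from \eqref{eq4.1}) yields $\gamma(E) > \gamma_1$, contradicting $\gamma(E) = \gamma_2 \leq \gamma_1$.

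The principal obstacle is the case analysis when $F$ has rank $2$ but is unstable: one must track how the $h^0(E)$ sections distribute between the destabilizing line bundles $L$ and $M$ and then invoke whichever of them carries at least $\lceil h^0(E)/2 \rceil$ sections; the tension between $d_L \leq \mu(E)$ (semistability of $E$) and $d_L > d_M$ (destabilization of $F$) is what drives the final contradiction.
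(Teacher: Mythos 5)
The paper does not actually prove this lemma: it is quoted verbatim from the companion paper \cite{ln}, so there is no in-text argument to compare yours against. Your self-contained rank-$2$ proof via the image $F$ of the evaluation map is sound and is in the same spirit as (though more elementary than) the general-rank argument of \cite{ln}. Case 1 and Case 2 with $F$ semistable are complete and correct. Case 2 with $F$ unstable is only sketched, but the sketch does close: writing $0\to L\to F\to M\to 0$, if both $h^0(L)\ge 2$ and $h^0(M)\ge 2$ then $d_M<d_L\le\mu(E)\le g-1$ shows both contribute to $\gamma_1$ and $\gamma(E)>\frac{d_F}{2}-h^0(F)+2\ge\frac{1}{2}(\gamma(L)+\gamma(M))\ge\gamma_1$; in the two degenerate subcases ($M\simeq\cO_C$, or $h^0(L)\le 1$ with $h^0(M)\ge h^0(E)-1\ge 2$) the averaged bound is insufficient and you must instead use $\frac{d_E}{2}\ge d_L$ (resp.\ $\frac{d_E}{2}\ge d_L\ge d_M+1$) together with the loss of at most one section to the rank-one piece, which gives $\gamma(E)\ge\gamma(L)+h^0(L)-1>\gamma_1$ (resp.\ $\gamma(E)\ge\gamma(M)+h^0(M)>\gamma_1$). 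Since $\gamma_2\le\gamma_2'\le\gamma_1$ by \eqref{eq4.1} (whose proof does not rely on this lemma, so there is no circularity), this contradicts minimality in every subcase. You should write out these estimates explicitly, since the phrase ``a Clifford-type estimate parallel to Case 1'' hides exactly the point where the argument could go wrong; but as it stands the proof is correct.
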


We now recall that, for any generated line bundle $L$ with $h^0(L)=3$, one can define a vector bundle $E_L$ of rank $2$ by 
means of the evaluation sequence
\begin{equation}\label{eq2}
0\ra E_L^*\ra H^0(L)\otimes{\mathcal O_C}\ra L\ra 0.
\end{equation}

\begin{lem}\label{lem0} Let $E_L$ be defined by \eqref{eq2}. Then $h^0(E_L)\ge3$. Moreover, 
\begin{itemize}
\item[(i)] if $d_L\le2d_1$, then $E_L$ is semistable; 
\item[(ii)] if $d_L<2d_1$, then $E_L$ is stable;
\item[(iii)] if $d_L=d_2<2d_1$, then  $h^0(E_L)=3$.
\end{itemize}
\end{lem}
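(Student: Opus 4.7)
My plan is to handle the four claims --- the bound $h^0(E_L)\ge 3$ and the three parts (i)--(iii) --- in that order. First I would derive the cohomology inputs: applying $H^0(-)$ to the dualized evaluation sequence
$$
0 \ra L^* \ra H^0(L)^* \otimes \cO_C \ra E_L \ra 0
$$
and using $h^0(L^*)=0$ (which holds since $h^0(L)=3$ forces $L\not\cong \cO_C$) immediately gives $h^0(E_L)\ge 3$. At the same time, applying $H^0(-)$ to \eqref{eq2} produces the identity map $H^0(L)\to H^0(L)$, so $H^0(E_L^*)=0$; this vanishing will drive the stability arguments.

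For (i) and (ii) I aim for one uniform estimate: every line subbundle $M\subset E_L$ should satisfy $d_M\le d_L-d_1$, from which (i) and (ii) follow by comparing with $d_L/2$. Given $M$ with quotient $N = L\otimes M^{-1}$, I would consider the surjective composition $H^0(L)^*\otimes \cO_C\twoheadrightarrow E_L\twoheadrightarrow N$. If $h^0(N)\ge 2$, then by the definition of the gonality sequence $d_N\ge d_1$ and we are done. The task is to rule out $h^0(N)\le 1$: the case $h^0(N)=0$ would force the composed sheaf map to be zero, contradicting surjectivity, while the case $h^0(N)=1$ is the real subtlety. Here the sheaf map factors as three sections proportional to a single $s_0\in H^0(N)$, and surjectivity then forces $s_0$ to be nowhere vanishing, so $N\cong \cO_C$ and $M\cong L$. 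Dualizing $0\to L\to E_L\to \cO_C\to 0$ would produce $\cO_C\hookrightarrow E_L^*$, contradicting $H^0(E_L^*)=0$. This borderline case $N\cong \cO_C$ is the main obstacle.

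For (iii) I would argue by contradiction, assuming $h^0(E_L)\ge 4$ with the aim of producing a line subbundle $M\subset E_L$ with $h^0(M)\ge 2$: this would force $d_M\ge d_1$, in conflict with the stability bound $d_M<d_L/2=d_2/2<d_1$ coming from (ii). Choosing a $4$-dimensional subspace $V\subset H^0(E_L)$, I would use the wedge map
$$
\wedge^2V\lra H^0(\det E_L)=H^0(L).
$$
Since $\dim\wedge^2V=6>3$, its kernel has dimension $\ge 3$, so its projectivization is a linear subspace of $\PP(\wedge^2V)\cong \PP^5$ of dimension $\ge 2$. The Grassmannian $G(2,V)$ sits inside as a smooth quadric $4$-fold, and any linear subspace of positive dimension meets a quadric hypersurface by B\'ezout, producing a decomposable element of the kernel. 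This yields linearly independent $s_1,s_2\in H^0(E_L)$ with $s_1\wedge s_2=0$ in $H^0(L)$; they span a rank-$1$ subsheaf of $E_L$ whose saturation is the desired $M$.
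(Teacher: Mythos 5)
Your proof is correct. For the bound $h^0(E_L)\ge3$ and for parts (i) and (ii) you follow essentially the same route as the paper: dualise \eqref{eq2} to see that $E_L$ is generated with $h^0(E_L)\ge3$, observe $h^0(E_L^*)=0$, and conclude that every line bundle quotient $N$ of $E_L$ has $h^0(N)\ge2$ and hence $d_N\ge d_1$, so that every line subbundle has degree $\le d_L-d_1$; your case analysis on $h^0(N)\in\{0,1\}$ (with the borderline $N\simeq\cO_C$ excluded via $h^0(E_L^*)=0$) merely spells out what the paper compresses into a single sentence about generated quotients. The genuine difference is in (iii): the paper disposes of it by citing \cite[Theorem 4.15(a)]{cl}, whereas you give a self-contained argument --- for a $4$-dimensional $V\subset H^0(E_L)$ the wedge map $\wedge^2V\ra H^0(L)$ has kernel of dimension $\ge3$, whose projectivisation must meet the Pl\"ucker quadric $G(2,V)\subset\PP^5$, yielding independent sections $s_1,s_2$ with $s_1\wedge s_2=0$ and hence a line subbundle $M$ with $h^0(M)\ge2$, so $d_M\ge d_1>d_2/2$, contradicting the stability established in (ii). This is in substance a direct proof of the $s=2$ case of the Paranjape--Ramanan lemma (Lemma \ref{lempr} of the paper), which could equally well have been invoked at this point; what your version buys is independence from the external reference, at the cost of reproving an argument the paper already keeps in stock.
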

\begin{proof}
Dualising \eqref{eq2}, we see that $h^0(E_L)\ge3$ and $E_L$ is generated. Since also $h^0(E_L^*)=0$, it follows that any quotient line bundle of $E_L$ has $h^0\ge2$ and therefore has degree $\ge d_1$. This gives (i) and (ii).

(iii) is a special case of \cite[Theorem 4.15(a)]{cl}.
\end{proof}

\begin{lem}\label{lem2}
If $E$ is a semistable bundle of rank $2$ with $h^0(E)\ge3$, then $d_E\ge d_2$. Moreover, if $E$ computes $\gamma_2$ but not $\gamma_2'$, then $E\simeq E_L$ for some line bundle $L$ of degree $d_L=d_2$ with $h^0(L)=3$.\end{lem}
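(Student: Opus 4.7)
The plan is to prove the two assertions separately. For the inequality $d_E \ge d_2$, I would split into cases according to $h := \max_{M \subset E} h^0(M)$, where $M$ ranges over line subbundles. If $h \ge 3$, then a maximizing $M$ satisfies $d_M \ge d_2$, and semistability gives $d_E \ge 2 d_M \ge 2 d_2$. If $h = 2$, then $d_M \ge d_1$, and semistability of $E$ gives $d_{E/M} \ge d_E/2$, whence $d_E \ge 2 d_M \ge 2 d_1 \ge d_2$ (using $d_2 \le 2 d_1$). Otherwise $h \le 1$; three linearly independent sections $s_1, s_2, s_3 \in H^0(E)$ must then span a rank-$2$ subsheaf of $E$ (otherwise their saturation is a line subbundle with three sections, violating $h \le 1$). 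The three $2$-wedges $s_i \wedge s_j \in H^0(\det E)$ are then linearly independent: any linear relation would give a decomposable $2$-form $u \wedge v$ in $\bigwedge^2 \mathrm{span}(s_1,s_2,s_3)$ vanishing in $H^0(\det E)$ (every $2$-form on a $3$-dimensional space is decomposable), forcing $u, v$ to be proportional at each point and hence to lie in a common line subbundle with $h^0 \ge 2$, contrary to $h \le 1$. Hence $h^0(\det E) \ge 3$ and $d_E = d_{\det E} \ge d_2$.

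For the second assertion, suppose $E$ computes $\gamma_2$ but not $\gamma_2'$. I first show $h^0(E) = 3$: since every bundle contributing to $\gamma_2'$ also contributes to $\gamma_2$, we have $\gamma_2 \le \gamma_2'$, so if $h^0(E) \ge 4$ then $\gamma(E) = \gamma_2 \le \gamma_2' \le \gamma(E)$, forcing $E$ to compute $\gamma_2'$, contradicting the hypothesis. By Lemma \ref{lem1}, $E$ is generated, and writing $L := \det E$ the evaluation sequence becomes
\[
0 \to L^{-1} \to H^0(E) \otimes \cO_C \to E \to 0,
\]
the kernel being $(\det E)^{-1}$ by a determinant computation. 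Dualizing yields
\[
0 \to E^* \to H^0(E)^* \otimes \cO_C \to L \to 0.
\]
Semistability combined with $\mu(E) \ge d_2/2 > 0$ forces $H^0(E^*) = 0$, so the cohomology long exact sequence shows that the $3$-dimensional space $H^0(E)^*$ injects into $H^0(L)$; thus $L$ is globally generated with $h^0(L) \ge 3$ and $d_L = d_E$. From $\gamma(E) = d_E/2 - 1$ and $\gamma_2 \le d_2/2 - 1$ (part of \eqref{eq4.1}) we deduce $d_E \le d_2$, which combined with the first assertion gives $d_L = d_E = d_2$. Since $d_3 > d_2$, we cannot have $h^0(L) \ge 4$, so $h^0(L) = 3$ and the injection $H^0(E)^* \hookrightarrow H^0(L)$ is an isomorphism. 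Under this identification the dualized sequence is the defining sequence of $E_L^*$, so $E \simeq E_L$.

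The main obstacle is the $h \le 1$ subcase of the first assertion: one needs the classical fact that every $2$-form on a $3$-dimensional vector space is decomposable in order to convert a linear dependence among the $s_i \wedge s_j$ into an actual line subbundle of $E$ with two sections. Once the bound $d_E \ge d_2$ is in hand, the second assertion is a clean consequence of Lemma \ref{lem1} and the evaluation sequence of a generated rank-$2$ bundle with exactly three global sections.
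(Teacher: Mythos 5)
Your proof is correct. For the second assertion you follow exactly the paper's route: deduce $h^0(E)=3$, invoke generatedness (Lemma \ref{lem1}), take the evaluation sequence with kernel $(\det E)^{-1}$, dualise, and use the degree count $\gamma(E)=\tfrac{d_E}{2}-1\le\tfrac{d_2}{2}-1$ from \eqref{eq4.1} to force $d_L=d_2$, $h^0(L)=3$ and $E\simeq E_L$; you merely make explicit the steps the paper leaves implicit (the vanishing of $H^0(E^*)$ by semistability of negative slope, and the identification of the dualised sequence with \eqref{eq2}). For the first assertion the paper gives no argument at all, citing instead the case $n=2$ of \cite[Proposition 4.11]{cl}, whereas you supply a self-contained proof. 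Your case split is sound: when $E$ has a line subbundle $M$ with $h^0(M)\ge2$, semistability gives $d_E\ge 2d_M\ge 2d_1\ge d_2$; when it has none, your wedge argument (linear independence of the sections $s_i\wedge s_j$ of $\det E$, using that every $2$-form on a $3$-dimensional space is decomposable) is precisely the proof of the rank-$2$, $s=1$ instance of the Paranjape--Ramanan lemma, i.e.\ Lemma \ref{lempr}, which you could have invoked directly to shorten that case. Either way, the argument is complete and the conclusion matches the paper's.
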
 
\begin{proof}
The first statement is the case $n=2$ of \cite[Proposition 4.11]{cl}. If $E$ computes $\gamma_2$ but not $\gamma_2'$, then certainly $h^0(E)=3$; so $d_E\ge d_2$. Moreover $E$ is generated by Lemma \ref{lem1}, so we have an exact sequence
$$0\ra L^*\ra H^0(E)\otimes{\mathcal O}\ra E\ra 0,$$
where $L\simeq\det E$ is a generated line bundle of degree $d_L\ge d_2$ with $h^0(L)\ge3$. In order to minimise $\gamma(E)$, we must take $d_L=d_2$ and then $h^0(L)=3$ and $E\simeq E_L$.
\end{proof}

\begin{cor}\label{cor2}
If $d_2 \leq 2\gamma_1+2$, then the bundles computing $\gamma_2$ but not $\gamma_2'$ are precisely the bundles $E_L$ 
for $L$ a line bundle of degree $d_2$ with $h^0(L)=3$.
\end{cor}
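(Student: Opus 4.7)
The plan is to combine Lemma~\ref{lem2} with a direct numerical verification. One inclusion is free: if $E$ computes $\gamma_2$ but not $\gamma_2'$, Lemma~\ref{lem2} already produces the required $L$ of degree $d_2$ with $h^0(L)=3$ and $E\simeq E_L$.

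For the converse, I would start with any line bundle $L$ satisfying $d_L=d_2$ and $h^0(L)=3$. Such $L$ is automatically generated, since otherwise removing its base locus would yield a line bundle of strictly smaller degree still with three sections, contradicting the minimality built into the definition of $d_2$. Hence the evaluation sequence~\eqref{eq2} defines a rank~$2$ bundle $E_L$. Using $d_2\le 2d_1$ (subadditivity of the gonality sequence recalled in Section~\ref{intro}) together with Lemma~\ref{lem0}(i), $E_L$ is semistable; the slope bound $\mu(E_L)=d_2/2\le\gamma_1+1\le g-1$ follows from the hypothesis and the classical estimate $\gamma_1\le (g-1)/2$, so $E_L$ is an admissible test bundle for $\gamma_2$.

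The decisive step is to pin down $\gamma(E_L)$. The hypothesis $d_2\le 2\gamma_1+2$ combined with the first equation in~\eqref{eq4.1} forces $\gamma_2=d_2/2-1$. Lemma~\ref{lem0} gives $h^0(E_L)\ge 3$, so $E_L$ contributes to $\gamma_2$ and hence $\gamma(E_L)\ge\gamma_2=d_2/2-1$; but $\gamma(E_L)=d_2/2-(h^0(E_L)-2)$, and combining these two inequalities forces $h^0(E_L)=3$ and $\gamma(E_L)=\gamma_2$. Therefore $E_L$ computes $\gamma_2$, and because $h^0(E_L)=3<4$ it does not even contribute to $\gamma_2'$. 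The only mildly delicate point is precisely this equality $h^0(E_L)=3$: it is tempting to invoke Lemma~\ref{lem0}(iii), but that statement requires the strict inequality $d_2<2d_1$. The argument above sidesteps this issue by reading $h^0(E_L)=3$ off the extremality of $\gamma_2$, so the corollary holds whether or not $d_2=2d_1$.
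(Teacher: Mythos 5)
Your proof is correct, and the overall structure (Lemma~\ref{lem2} for one inclusion, properties of $E_L$ for the other) matches the paper's. The one genuine difference is how you obtain $h^0(E_L)=3$: the paper notes that $d_2\le 2\gamma_1+2\le 2d_1-2$ (using $\gamma_1\le d_1-2$) and then invokes Lemma~\ref{lem0}(ii),(iii) to get stability and $h^0(E_L)=3$ directly, whereas you extract $h^0(E_L)=3$ from the extremality of $\gamma_2$: since \eqref{eq4.1} and the hypothesis force $\gamma_2=\frac{d_2}{2}-1$, the inequality $\gamma(E_L)=\frac{d_2}{2}-(h^0(E_L)-2)\ge\gamma_2$ caps $h^0(E_L)$ at $3$. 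Your route is more self-contained (it avoids the citation behind Lemma~\ref{lem0}(iii)) and only needs semistability from $d_2\le 2d_1$. One small caveat: your closing remark that the argument covers the case $d_2=2d_1$ is vacuous here, since the hypothesis $d_2\le 2\gamma_1+2$ together with $\gamma_1\le d_1-2$ already rules that case out --- which is exactly why the paper's appeal to Lemma~\ref{lem0}(iii) is legitimate. Both arguments are valid; yours trades an external reference for a short numerical squeeze.
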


\begin{proof}
By the lemma, any bundle computing $\gamma_2$ but not $\gamma_2'$ has the form $E_L$. Since $d_2\le2\gamma_1+2\le2d_1-2$, it follows from Lemma \ref{lem0} that $E_L$ is stable and $h^0(E_L)=3$. A direct computation using \eqref{eq4.1} gives $\gamma(E_L)=\gamma_2$.
\end{proof}

In discussing $\gamma_2'$, we shall make much use of the Lemma of Paranjape and Ramanan \cite[Lemma 3.9]{pr} (see also \cite[Lemma 4.8]{cl}), which we now state for the case of bundles of rank $2$.

\begin{lem}\label{lempr}
Let $E$ be a vector bundle of rank $2$ with $h^0(E)=2+s$ for some $s\ge1$. Suppose that $E$ has no line subbundle $M$ with $h^0(M)\ge2$. Then $h^0(\det E)\ge2s+1$ and, in particular, $d_E\ge d_{2s}$.
\end{lem}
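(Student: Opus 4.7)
The plan is to analyse the exterior multiplication
$$\mu_E\colon\wedge^2 H^0(E)\lra H^0(\wedge^2 E)=H^0(\det E)$$
and exhibit an image of $\CC$-dimension at least $2s+1$.

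First I would translate the hypothesis into the statement that $\ker\mu_E$ contains no nonzero decomposable element. For $\CC$-linearly independent sections $\sigma,\tau\in H^0(E)$, the section $\sigma\wedge\tau\in H^0(\det E)$ vanishes precisely when $\sigma(p)$ and $\tau(p)$ are proportional in $E_p$ at every $p\in C$; equivalently, $\sigma$ and $\tau$ generate a rank-one subsheaf whose saturation is a line subbundle $M\subset E$ with $h^0(M)\ge 2$. This is ruled out by the assumption on $E$.

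Next I would apply the classical dimension count on the Grassmannian. Set $n=h^0(E)=2+s$. The classes of nonzero decomposable elements form $\mathrm{Gr}(2,H^0(E))$, an irreducible closed subvariety of $\PP(\wedge^2 H^0(E))=\PP^{\binom{n}{2}-1}$ of dimension $2n-4$. Since any projective subvariety and any projective linear subspace of $\PP^{N}$ whose dimensions sum to at least $N$ must meet, every linear subspace of $\PP(\wedge^2 H^0(E))$ of projective dimension $\ge\binom{n-2}{2}$ intersects $\mathrm{Gr}(2,H^0(E))$. Thus
$$\dim_{\CC}\ker\mu_E\;\le\;\binom{n-2}{2}\quad\text{and so}\quad \dim_{\CC}\IM\mu_E\;\ge\;\binom{n}{2}-\binom{n-2}{2}=2n-3=2s+1,$$
giving $h^0(\det E)\ge 2s+1$. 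The inequality $d_E\ge d_{2s}$ is then immediate: since $E$ has rank two, $\det E$ is a line bundle of degree $d_E$ carrying at least $2s+1$ sections, and by definition $d_{2s}$ is the minimum such degree.

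The main obstacle is the translation step. Everything depends on identifying nonzero decomposable elements of $\ker\mu_E$ with line subbundles $M\subset E$ having $h^0(M)\ge 2$, which requires passing from the rank-one subsheaf generated by $\sigma$ and $\tau$ to its saturation as a subbundle. Once this correspondence is in place, the Grassmannian dimension bound is standard and completes the argument cleanly.
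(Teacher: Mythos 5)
Your proof is correct. The paper does not actually prove this lemma --- it quotes it from Paranjape--Ramanan \cite[Lemma 3.9]{pr} and \cite[Lemma 4.8]{cl} --- but your argument (a nonzero decomposable element of $\ker\left(\wedge^2H^0(E)\to H^0(\det E)\right)$ saturates to a line subbundle with $h^0\ge 2$, so $\PP(\ker)$ misses $\mathrm{Gr}(2,H^0(E))$ and the projective dimension theorem bounds $\dim\ker$ by $\binom{n-2}{2}$, giving image of dimension $\ge 2n-3=2s+1$) is exactly the standard proof of the rank-$2$ case underlying those references, and the final step $d_E\ge d_{2s}$ follows from the definition of the gonality sequence as you say.
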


As a complement to this lemma, we have
\begin{lem}\label{lempr2}
Suppose that $E$ is a semistable bundle of rank $2$ and degree $\le2g-2$ which possesses a subbundle $M$ with $h^0(M)\ge2$. Then $\gamma(E)\ge\gamma_1$, with equality if and only if $\gamma(M)=\gamma(E/M)=\gamma_1$ and all sections of $E/M$ lift to $E$.
\end{lem}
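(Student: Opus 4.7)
The plan is to write $N = E/M$ for the quotient line bundle, and bound $\gamma(E)$ from below by the average $\tfrac{1}{2}(\gamma(M)+\gamma(N))$. From the exact sequence $0 \to M \to E \to N \to 0$ we have $d_E = d_M + d_N$ and $h^0(E) \le h^0(M)+h^0(N)$, so a one-line computation gives
$$
2\gamma(E) \;=\; d_E - 2h^0(E) + 4 \;\ge\; \gamma(M) + \gamma(N),
$$
with equality precisely when $h^0(E) = h^0(M)+h^0(N)$, i.e.\ all sections of $N$ lift to $E$. This reduces the lemma to showing $\gamma(M) \ge \gamma_1$ and $\gamma(N) \ge \gamma_1$, with the stricter equality tracking as claimed.

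For $M$: semistability gives $d_M \le \mu(E) \le g-1$, and $h^0(M)\ge 2$ by hypothesis, so $M$ contributes to $\gamma_1$ and $\gamma(M)\ge\gamma_1$. From $\gamma(M)\ge\gamma_1$ and $h^0(M)\ge 2$ we also extract the bound $d_M \ge \gamma_1 + 2$, which will be used for $N$.

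For $N$ I would split into three cases. If $h^0(N)\ge 2$ and $h^1(N)\ge 2$, then $N$ contributes to $\gamma_1$ and $\gamma(N) \ge \gamma_1$. If $h^0(N)\ge 2$ but $h^1(N)\le 1$, use Riemann--Roch to write $h^0(N) = d_N - g + 1 + h^1(N)$, whence $\gamma(N) = 2g - d_N - 2h^1(N)$; combined with $d_N \le 2g-2-d_M \le 2g-4-\gamma_1$ this yields $\gamma(N) \ge 2+\gamma_1 > \gamma_1$. Finally, if $h^0(N) \le 1$ then $h^0(E) \le h^0(M)+1$, and semistability ($d_E \ge 2d_M$) gives directly
$$
\gamma(E) \ge d_M - h^0(M) + 1 = \gamma(M) + h^0(M) - 1 \ge \gamma_1 + 1 > \gamma_1.
$$

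Combining the cases, $\gamma(E)\ge\gamma_1$ in general. For the equality statement, equality in $\gamma(E)=\gamma_1$ forces equality in $2\gamma(E)\ge\gamma(M)+\gamma(N)$ (so sections lift) and $\gamma(M)=\gamma(N)=\gamma_1$; the last two cases above are ruled out as strict inequalities, so we are in the first case. Conversely, if the three conditions hold then $2\gamma(E)=\gamma(M)+\gamma(N)=2\gamma_1$ immediately. The only mild subtlety, which I would treat carefully, is the use of Serre duality implicit in case~(i) and the sharp bookkeeping in case~(ii); otherwise the argument is a routine assembly of the inequalities noted above.
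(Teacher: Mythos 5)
Your proof is correct and follows essentially the same route as the paper: bound $2\gamma(E)$ below by $\gamma(M)+\gamma(N)$ with equality precisely when all sections of $N=E/M$ lift, then show $\gamma(M),\gamma(N)\ge\gamma_1$ (or bound $\gamma(E)$ directly) by a case analysis on whether $N$ contributes to $\gamma_1$. The only cosmetic difference is that where you invoke Riemann--Roch for the non-contributing cases, the paper uses $d_N\ge d_M$, $d_{K\otimes N^*}\ge d_M$ and Serre duality to compare $\gamma(N)$ with $\gamma(M)$; both are routine and equivalent.
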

\begin{proof}
(This follows the proof of \cite[Theorem 5.2]{cl}.) By semistability, we have $d_M\le g-1$, so $M$ contributes to $\gamma_1$ and $\gamma(M)\ge\gamma_1$. Moreover $d_{E/M}\ge d_M$, so, if  $h^0(E/M)\le h^0(M)$, then $\gamma(E/M)\ge\gamma(M)\ge\gamma_1$. Note also that
$$d_{K\otimes (E/M)^*}=2g-2-d_E+d_M\ge d_M.$$
Hence, if $h^0(K\otimes (E/M)^*)\le h^0(M)$, then $\gamma(E/M)=\gamma(K\otimes(E/M)^*)\ge\gamma_1$. If neither of these possibilities occurs, then $E/M$ contributes to $\gamma_1$, so again $\gamma(E/M)\ge\gamma_1$. The result now follows from the fact that $\gamma(E)\ge\frac12(\gamma(M)+\gamma(E/M))$ with equality if and only if all sections of $E/M$ lift to $E$.
\end{proof}

In \cite{mer}, V. Mercat made a conjecture concerning the number of sections that a semistable bundle $E$ on a curve of given Clifford index may have. In the case of rank $2$, this conjecture can be expressed as follows.
\begin{itemize}
\item[(i)] If $\gamma_1+2\le\mu(E)\le g-1$, then $\gamma(E)\ge\gamma_1$;
\item[(ii)] if $\frac{\gamma_1+3}2\le\mu(E)<\gamma_1+2$, then $h^0(E)\le3$;
\item[(iii)] if $1\le\mu(E)<\frac{\gamma_1+3}2$, then $h^0(E)\le2$.
\end{itemize}
We observed in \cite[Proposition 3.3]{cl} that this conjecture implies that $\gamma_2'=\gamma_1$. In fact, we have
\begin{prop}\label{propmer}
Mercat's conjecture for rank $2$ holds if and only if $\gamma_2'=\gamma_1$.
\end{prop}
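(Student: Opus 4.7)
The forward implication is already recorded in [cl, Proposition 3.3]: any $E$ contributing to $\gamma_2'$ has $h^0(E)\ge 4$ and $\mu(E)\le g-1$, so applying case (i) of Mercat (noting that (ii) and (iii) force $h^0(E)\le 3$) yields $\gamma(E)\ge\gamma_1$, hence $\gamma_2'\ge\gamma_1$; combined with $\gamma_2'\le\gamma_1$ from \eqref{eq4.1} we get equality. So the content is the converse, and my plan is to assume $\gamma_2'=\gamma_1$ and verify each of Mercat's three inequalities, using the rank $2$ identity $\gamma(E)=\mu(E)-h^0(E)+2$ throughout.

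For (i) and (ii), the cases with $h^0(E)\ge4$ are the only ones that invoke the hypothesis. Here one first notes that $\gamma_1+2\le g-1$ (an immediate consequence of Clifford's bound $\gamma_1\le(g-1)/2$ together with $g\ge 4$), so any $E$ with $\mu(E)<\gamma_1+2$ automatically satisfies $\mu(E)\le g-1$ and therefore contributes to $\gamma_2'$. Thus $\gamma(E)\ge\gamma_2'=\gamma_1$, which rearranges to $\mu(E)\ge\gamma_1+h^0(E)-2\ge\gamma_1+2$, contradicting the upper bound in (ii). For (i) with $h^0(E)\le 3$, a direct computation gives $\gamma(E)\ge\mu(E)-1\ge\gamma_1+1$, well above $\gamma_1$.

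For (iii), the subcase $h^0(E)\ge4$ again follows exactly as in (ii); the resulting inequality $\mu(E)\ge\gamma_1+2$ is inconsistent with $\mu(E)<(\gamma_1+3)/2$ since $\gamma_1\ge 0$. The interesting observation is that the subcase $h^0(E)=3$ actually holds \emph{unconditionally}, so it needs no appeal to the hypothesis. Indeed, apply Lemma \ref{lempr} with $s=1$: either $E$ has a line subbundle $M$ with $h^0(M)\ge 2$, or $d_E\ge d_2$. In the first alternative, semistability gives $d_M\le d_E/2<(\gamma_1+3)/2\le g-1$, so $M$ contributes to $\gamma_1$; then $\gamma(M)\ge\gamma_1$ forces $d_M\ge\gamma_1+2$, which combined with $d_M<(\gamma_1+3)/2$ yields $\gamma_1<-1$, a contradiction. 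In the second alternative, \eqref{eq0} gives $d_2\ge\min\{\gamma_1+4,g+1\}=\gamma_1+4$ (using $\gamma_1\le g-3$ for $g\ge 4$), while $d_E<\gamma_1+3$, another contradiction.

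There is really no single hard step; the proof is a bookkeeping exercise once one separates the cases by $h^0(E)$. The only mildly subtle point is to check in each subcase that the $\mu(E)\le g-1$ hypothesis needed to invoke $\gamma_2'$ is actually available, which reduces to the Clifford-type bound $\gamma_1+2\le g-1$ mentioned above; everything else is a one-line inequality manipulation.
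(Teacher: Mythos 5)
Your proof is correct and follows essentially the same route as the paper: the forward direction is delegated to \cite[Proposition 3.3]{cl}, and the converse is a case analysis on Mercat's three clauses using $\gamma(E)=\mu(E)-h^0(E)+2$ and the fact that any $E$ with $h^0(E)\ge4$ and $\mu(E)\le g-1$ contributes to $\gamma_2'$. The only (harmless) divergence is in clause (iii) for $h^0(E)=3$, where the paper simply observes that $\gamma(E)<\frac{\gamma_1+1}{2}$ contradicts \eqref{eq4.1}, while you unpack the same contradiction via Lemma \ref{lempr} and \eqref{eq0}; your version is slightly more explicit about why the needed bound on $d_2$ holds, but the content is the same.
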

\begin{proof} By \cite[Proposition 3.3]{cl}, the conjecture implies that $\gamma_2'=\gamma_1$. Conversely, suppose that $\gamma_2'=\gamma_1$. Then certainly (i) holds. If $\mu(E)<\gamma_1+2$ and $h^0(E)\ge4$, then $E$ contributes to $\gamma_2'$. On the other hand $\gamma(E)<\gamma_1+2-2=\gamma_1$, a contradiction. Finally, suppose that $\mu(E)<\frac{\gamma_1+3}2$ and $h^0(E)\ge3$. Then $\gamma(E)<\frac{\gamma_1+3}2 -1=\frac{\gamma_1+1}2$; this contradicts \eqref{eq4.1}.
\end{proof}

Finally in this section we prove three lemmas which will be useful in determining when all sections of a quotient $E/M$ lift to $E$.

\begin{lem} \label{l2.3}
There exists a non-trivial extension of vector bundles
$$
0 \ra F \ra E \ra G \ra 0
$$
with the property that all sections of $G$ lift to $E$ if and only if the multiplication map
\begin{equation} \label{e2.2}
\alpha: H^0(G) \otimes H^0(K \otimes F^*) \ra H^0(K \otimes F^* \otimes G)
\end{equation}
is not surjective.
\end{lem}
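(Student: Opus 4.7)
The plan is to translate the lifting condition into a vanishing condition on a cup product, and then use Serre duality to identify this with the surjectivity of $\alpha$.

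First I would set up the standard identifications. Extensions of $G$ by $F$ are classified by $\Ext^1(G,F) \cong H^1(F \otimes G^*)$. Given such an extension with class $e \in H^1(F \otimes G^*)$, the long exact sequence in cohomology yields a coboundary map $\partial_e : H^0(G) \ra H^1(F)$, and a section $s \in H^0(G)$ lifts to $E$ precisely when $\partial_e(s) = 0$. Thus the extension has the property that all sections of $G$ lift to $E$ if and only if $\partial_e = 0$. The key functorial fact is that $\partial_e$ is given by cup product with $e$, i.e.\ $\partial_e(s) = s \cup e$ under the natural pairing
$$H^0(G) \otimes H^1(F \otimes G^*) \lra H^1(F).$$

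Next I would invoke Serre duality to dualise everything on the target side. There are canonical isomorphisms $H^1(F) \cong H^0(K \otimes F^*)^\vee$ and $H^1(F \otimes G^*) \cong H^0(K \otimes F^* \otimes G)^\vee$, and under these the cup-product pairing above becomes the transpose of the multiplication map
$$\alpha : H^0(G) \otimes H^0(K \otimes F^*) \lra H^0(K \otimes F^* \otimes G).$$
Explicitly, for $s \in H^0(G)$, $t \in H^0(K \otimes F^*)$ and $e \in H^1(F \otimes G^*)$, one has
$$\langle t, \partial_e(s)\rangle \;=\; \langle e, \alpha(s \otimes t)\rangle,$$
where the brackets denote the Serre duality pairings. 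The verification of this compatibility is the only technical point, and the main obstacle in writing the proof cleanly; it follows from the standard fact that cup product is compatible with Serre duality (one can check it via \v{C}ech representatives, or quote it from Hartshorne III).

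Finally, combining the two steps, $\partial_e = 0$ if and only if $e$ annihilates the image of $\alpha$ under the Serre duality pairing between $H^1(F \otimes G^*)$ and $H^0(K \otimes F^* \otimes G)$. A non-trivial $e$ satisfying this exists if and only if $\IM(\alpha) \neq H^0(K \otimes F^* \otimes G)$, i.e.\ $\alpha$ is not surjective. This proves both directions of the lemma.
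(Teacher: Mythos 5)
Your argument is correct and is essentially the paper's own proof written out in more detail: the map $e\mapsto\partial_e$ is exactly the canonical map $H^1(G^*\otimes F)\ra\Hom(H^0(G),H^1(F))$ used there, and your Serre-duality computation is the paper's one-line assertion that the dual of this map is $\alpha$, so a non-trivial extension with full lifting exists iff the kernel is non-trivial iff $\alpha$ is not surjective. No gap; the only difference is the level of explicitness in verifying the compatibility of cup product with Serre duality.
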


\begin{proof}
All sections of $G$ lift to $E$ if and only if the extension class is in the kernel of the canonical map
$$
H^1(G^* \otimes F) \ra \Hom(H^0(G),H^1(F)).
$$
We require the condition that this kernel is non-trivial which is the case if and only if the dual map \eqref{e2.2}
is not surjective.
\end{proof}

\begin{lem}  \label{mult}
Suppose $G$ is a generated line bundle with $h^0(G) = 2$ and $F$ is a stable vector bundle with $\mu(F) = \deg G$.
Then we have
$$
h^0(F \otimes G) \geq \left\{ \begin{array}{ccc}
                              2 h^0(F)  & \mbox{if} & F \not \simeq G,\\
                              3 & \mbox{if} & F \simeq G.
                              \end{array} \right.
$$
Moreover,
$$
\mbox{\em codim im}(\alpha) = \left\{ \begin{array}{lcc}
                              h^0(F \otimes G) - 2h^0(F) & \mbox{if} & F \not \simeq G,\\
                              h^0(G \otimes G) - 3 & \mbox{if} & F \simeq G. 
                              \end{array}  \right.
$$
\end{lem}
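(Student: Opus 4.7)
The plan is to prove both statements together by applying the evaluation sequence of $G$. Since $G$ is generated with $h^0(G) = 2$, there is a short exact sequence
$$
0 \ra G^* \ra H^0(G) \otimes \cO_C \ra G \ra 0.
$$
Tensoring with $F$ and passing to the long exact sequence in cohomology yields
$$
0 \ra H^0(F \otimes G^*) \ra H^0(G) \otimes H^0(F) \stackrel{\alpha}{\lra} H^0(F \otimes G) \ra H^1(F \otimes G^*),
$$
so that $\ker \alpha = H^0(F \otimes G^*)$ and
$$
\codim \IM(\alpha) \;=\; h^0(F \otimes G) - 2\,h^0(F) + h^0(F \otimes G^*).
$$
Both parts of the lemma thus reduce to determining $h^0(F \otimes G^*)$ in the two cases.

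Since $F$ is stable and $G^*$ is a line bundle, $F \otimes G^*$ is again stable, with slope $\mu(F) - \deg G = 0$. If $F \simeq G$, then $F \otimes G^* \simeq \cO_C$ and $h^0(F \otimes G^*) = 1$, whence $\dim \IM(\alpha) = 2\,h^0(G) - 1 = 3$; this immediately gives $h^0(G \otimes G) \ge 3$ together with the second codimension formula.

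The key step, and the place where I expect to pause for a short separate argument, is the vanishing $h^0(F \otimes G^*) = 0$ when $F \not\simeq G$. A nonzero section $\cO_C \ra F \otimes G^*$ generates a rank-one subsheaf of non-negative degree. If $F$ has rank $\ge 2$, then so does the stable bundle $F \otimes G^*$, and stability forces every rank-one subsheaf to have strictly negative degree, a contradiction. If instead $F$ is a line bundle, then $F \otimes G^*$ is a degree-zero line bundle admitting a section, hence $F \otimes G^* \simeq \cO_C$ and $F \simeq G$, contrary to hypothesis. Substituting $h^0(F \otimes G^*) = 0$ into the displayed formulas then yields $h^0(F \otimes G) \ge 2\,h^0(F)$ and the first codimension formula, completing the proof.
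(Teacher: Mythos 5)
Your first assertion and your key vanishing are fine: the evaluation sequence of the pencil $H^0(G)$ is exactly the base-point-free pencil trick that the paper uses, and your case analysis showing $h^0(F\otimes G^*)=0$ for $F\not\simeq G$ and $=1$ for $F\simeq G$ (stability of $F\otimes G^*$, slope $0$) is the same stability input the paper invokes. The inequality $h^0(F\otimes G)\ge 2h^0(F)-h^0(F\otimes G^*)$ therefore comes out correctly.

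The gap is in the second half: you compute the codimension of the image of the map $H^0(G)\otimes H^0(F)\to H^0(F\otimes G)$, but the $\alpha$ of the lemma is the map \eqref{e2.2} from Lemma \ref{l2.3}, namely $H^0(G)\otimes H^0(K\otimes F^*)\to H^0(K\otimes F^*\otimes G)$ — this is the map whose non-surjectivity controls lifting of sections in an extension $0\to F\to E\to G\to 0$, and it is the one used throughout Sections 5--7. These are different maps living in different target spaces. Applying your exact sequence to $K\otimes F^*$ instead of $F$ gives $\ker\alpha\simeq H^0(K\otimes F^*\otimes G^*)$ and
$\mbox{codim im}(\alpha)=h^1(F\otimes G^*)-2h^1(F)+h^1(F\otimes G)$,
and one must then use Riemann--Roch (the Euler characteristics satisfy $\chi(F\otimes G)-2\chi(F)+\chi(F\otimes G^*)=0$) together with the same stability vanishing to convert this into $h^0(F\otimes G)-2h^0(F)+h^0(F\otimes G^*)$. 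That conversion is precisely the computation the paper carries out, and it is the step your argument silently bypasses by working with the wrong map. Your final formulas are numerically correct for exactly this reason, but as written the proof establishes a statement about a different multiplication map; either redo the computation for the map \eqref{e2.2} as above, or prove explicitly that the two codimensions coincide.
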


\begin{proof}
Since $G$ is generated with $h^0(G) = 2$, the first assertion follows from the base-point-free pencil trick.
Note also that by the base-point-free pencil trick, 
$$
\ker(\alpha) \simeq H^0(G^* \otimes F^* \otimes K) \simeq H^1(F \otimes G)
$$
and hence
\begin{eqnarray*}
\dim \IM(\alpha) & = & 2[h^0(F) - d_F + r_F(g-1)] \\
&& - [h^0(F \otimes G) - d_F  - r_F d_G + r_F(g-1)]\\
& = & 2h^0(F) -h^0(F \otimes G) -d_F + r_F d_G + r_F (g-1).
\end{eqnarray*}
Since $F$ is stable and $\mu(F) = \deg G$, we have $h^1(G \otimes F^* \otimes K) = 0$ if $F \not \simeq G$ and $= 1$ 
if $F \simeq G$. So 
$$
h^0 (G \otimes F^* \otimes K) = \left\{ \begin{array}{lcc}
                                        r_F(g-1) + r_F d_G -d_F & \mbox{if} & F \not \simeq G,\\
                                        r_F(g-1) + r_F d_G -d_F + 1 & \mbox{if} & F \simeq G.
                                        \end{array} \right.
$$
The result follows.                                         
\end{proof}

For the final lemma we need a definition which we shall use several times. A curve is said to be a {\it Petri curve}
if the map 
$$
H^0(L) \otimes H^0(K \otimes L^*) \ra H^0(K)
$$
is injective for all line bundles $L$. It is important to note that the general curve of any genus is a Petri curve and that 
\begin{equation} \label{eqdr}
d_r = g + r - \left[ \frac{g}{r+1} \right].
\end{equation}
The equation \eqref{eqdr} is straightforward from the definitions.

\begin{lem} \label{lem2.10}
Let $C$ be a Petri curve of genus $g$ and $Q$ a line bundle of degree $d_1$ computing $\gamma_1$. Then

{\em (1)} $h^0(Q^2) = 3$ if $g$ is even;

{\em (2)} $h^0(Q^2) = 4$ if $g$ is odd.
\end{lem}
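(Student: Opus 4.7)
The plan is to apply the base-point-free pencil trick to $Q$ paired with itself and read $h^0(Q^2)$ off the resulting cohomology long exact sequence, using the Petri hypothesis to pin down the relevant connecting homomorphism.

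First I would observe that $h^0(Q)=2$: any line bundle of degree $d_1$ with $h^0\ge 3$ would force $d_1\ge d_2>d_1$, a contradiction. I would also note that $Q$ is generated, since a base point would produce a line bundle of degree $d_1-1$ with $h^0\ge 2$, again contradicting the definition of $d_1$. In particular $\gamma_1=d_1-2$.

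Tensoring the evaluation sequence for $Q$ by $Q$ yields
$$0\ra\cO_C\ra H^0(Q)\otimes Q\ra Q^2\ra 0.$$
Its long exact cohomology sequence contains the map $\varphi\colon H^1(\cO_C)\ra H^0(Q)\otimes H^1(Q)$ induced by the left-hand inclusion, which by Serre duality is dual to the Petri multiplication $H^0(Q)\otimes H^0(K\otimes Q^*)\ra H^0(K)$. Since $C$ is Petri, this multiplication is injective, so $\varphi$ is surjective. Riemann-Roch applied to $Q$ gives $h^1(Q)=g-d_1+1$, hence $\dim\ker\varphi=g-2h^1(Q)=2d_1-g-2$.

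Reading $h^0(Q^2)$ off the rest of the sequence then yields $h^0(Q^2)=4-1+\dim\ker\varphi=2d_1-g+1$. Substituting $d_1=g+1-[g/2]$ from \eqref{eqdr} gives $h^0(Q^2)=3$ when $g$ is even and $h^0(Q^2)=4$ when $g$ is odd. The only step requiring care, rather than a serious obstacle, is the Serre-duality identification of $\varphi$ with the dual of the Petri map; this is standard via the cup-product description of maps induced in cohomology.
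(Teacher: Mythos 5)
Your proof is correct, and at its core it is the paper's argument in Serre-dual form. The paper computes $h^0(K\otimes Q^{*2})=h^0(Q^2)-3$ (resp.\ $-4$) by Riemann--Roch using $d_1=\left[\frac{g+3}{2}\right]$, exactly the numerics you use, and then shows $h^0(K\otimes Q^{*2})=0$ by a direct diagram: a nonzero section $s$ of $K\otimes Q^{*2}$ would embed $H^0(Q)\otimes H^0(Q)$ into $H^0(Q)\otimes H^0(K\otimes Q^*)$, and composing with the injective Petri map would force $H^0(Q)\otimes H^0(Q)\to H^0(Q^2)$ to be injective, contradicting the fact that it kills $\bigwedge^2H^0(Q)$. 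You instead prove the equivalent vanishing $h^1(Q^2)=0$ by running the base-point-free pencil trick on $Q\otimes Q$ and identifying the induced map $H^1(\mathcal{O}_C)\to H^0(Q)\otimes H^1(Q)$ with the dual of the Petri map. The two routes are interchangeable (applying the pencil trick to $K\otimes Q^*$ identifies the kernel of the Petri map with $H^0(K\otimes Q^{*2})$, which is how one sees the equivalence directly); what the paper's version buys is that it sidesteps the one step you flag as delicate, namely the Serre-duality identification of $\varphi$ with the dual Petri map, which implicitly invokes the isomorphism $H^0(Q)^*\cong H^0(Q)\otimes\bigl(\bigwedge^2H^0(Q)\bigr)^*$ for the two-dimensional space $H^0(Q)$, at the cost of introducing the auxiliary section $s$. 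Your preliminary reductions ($h^0(Q)=2$, $Q$ generated) and the final arithmetic $h^0(Q^2)=2d_1-g+1$ are all sound and consistent with the paper.
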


\begin{proof}
By Riemann-Roch,
\begin{eqnarray*}
h^0(K \otimes {Q^*}^2) & = & g-2d_1 -1 + h^0(Q^2)\\
& = & \left\{ \begin{array}{lcl}
              h^0(Q^2) - 3 & \mbox{if} & g \; \mbox{is even},\\
              h^0(Q^2) - 4 & \mbox{if} & g \; \mbox{is odd},
              \end{array}  \right.
\end{eqnarray*}
since $d_1 = \left[ \frac{g+3}{2} \right]$. We claim now that $h^0(K \otimes {Q^*}^2) =0$ on a Petri curve giving the result.

To prove the claim, suppose that $0 \neq s \in H^0(K \otimes {Q^*}^2)$.
Consider the commutative diagram
$$ 
\xymatrix{
H^0(Q) \otimes H^0(Q) \ar[r]  \ar[d]_{id \otimes ( \cdot s)} & H^0(Q^2) \ar[d]^{\cdot s}\\
H^0(Q) \otimes H^0(K \otimes Q^*) \ar[r] & H^0(K).
    }
$$
The left hand vertical homomorphism is clearly injective and for a Petri curve the bottom map as well.
Since $h^0(Q) > 1$, the top horizontal map is not injective, a contradiction.
\end{proof}

\section{Smooth plane curves}\label{spc}

Let $C$ be a curve of Clifford dimension $2$, in other words a smooth plane curve of degree $\delta\ge5$. We recall (see, for example, \cite[Section 8]{cl}) that
\begin{equation} \label{eq3.1}
\gamma_1 = \delta - 4, \quad d_1 = \delta-1, \quad d_2 = \delta, \quad d_3=2\delta-2, \quad d_4 = 2 \delta -1.
\end{equation}
Moreover, the hyperplane bundle $H$ is the unique line bundle of degree $\delta$ with $h^0(H) =3$ and $H$ and $K\otimes H^*\simeq H^{\delta-4}$ are the only line bundles computing $\gamma_1$.
It is well known that $C$ is projectively normal in $\PP^2$. So all multiplication maps 
$H^0(H^r) \otimes H^0(H^s) \ra H^0(H^{r+s})$ are surjective.

\begin{prop} \label{pr3.4}
For a smooth plane quintic the only bundle computing $\gamma_2 = \gamma'_2 = 1$ is
$H \oplus H.$
\end{prop}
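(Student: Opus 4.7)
The plan is to pin $E$ down numerically using $\gamma(E)=1$ and $\mu(E)\leq g-1$, then analyse the resulting possibilities via the two dichotomous situations of Lemmas \ref{lempr} and \ref{lempr2}. Since $\delta = 5$ one has $g = 6$, and from \eqref{eq3.1} $\gamma_1 = 1$, $d_1 = 4$, $d_2 = 5$, $d_4 = 9$; moreover $K \simeq H^2$, so $K\otimes H^* \simeq H$ and therefore $H$ is the only line bundle computing $\gamma_1$. By \eqref{eq4.1} we have $\gamma_2 = \gamma_2' = \gamma_1 = 1$. Writing $\gamma(E)=1$ as $d_E = 2h^0(E)-2$ and using Lemma \ref{lem2} to exclude $h^0(E)=3$ (since that would force $d_E = 4 < d_2$), the condition $\mu(E)\leq 5$ leaves exactly three options $(d_E,h^0(E)) \in \{(6,4),(8,5),(10,6)\}$.

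I would then split on whether $E$ admits a line subbundle $M$ with $h^0(M)\geq 2$. If it does, Lemma \ref{lempr2} forces $\gamma(M) = \gamma(E/M) = \gamma_1$; by the uniqueness above, both $M$ and $E/M$ are isomorphic to $H$, so $d_M = d_{E/M} = 5$ and hence $d_E = 10$. It then remains to show that any extension $0 \to H \to E \to H \to 0$ in which all sections of the quotient lift must be split. By Lemma \ref{l2.3} this amounts to surjectivity of
\begin{equation*}
H^0(H)\otimes H^0(K\otimes H^*) \;=\; H^0(H)\otimes H^0(H) \;\lra\; H^0(H^2),
\end{equation*}
which is exactly projective normality of $C\subset\PP^2$. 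Hence $E\simeq H\oplus H$.

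If no such $M$ exists, I would apply Lemma \ref{lempr} with $s = h^0(E)-2$ to get $d_E\geq d_{2s}$ together with $h^0(\det E)\geq 2s+1$. For $s=2$ this demands $6\geq d_4 = 9$; for $s=3$ it demands $8\geq d_6$, but projective normality gives $h^0(H^2)=6$, hence $d_5 = 10$ and $d_6\geq 11$; for $s=4$ the bundle $\det E$ would be of degree $2g-2=10$ with $h^0\geq 9$, beyond the Clifford bound $h^0\leq g=6$. All three subcases fail. The main obstacle in the argument is the $d_E = 10$ subcase of the previous paragraph: ruling out non-trivial self-extensions of $H$ depends crucially on the quintic-specific identification $K\otimes H^*\simeq H$, which is what converts the lifting obstruction of Lemma \ref{l2.3} into the classical projective normality statement; the numerical eliminations in the no-subbundle case are then routine bookkeeping.
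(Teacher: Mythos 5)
Your proposal is correct and follows essentially the same route as the paper: the same dichotomy via Lemmas \ref{lempr} and \ref{lempr2}, reduction to an extension of $H$ by $H$, and splitting via Lemma \ref{l2.3} and projective normality ($K\simeq H^2$). The only cosmetic difference is that you eliminate the no-subbundle case by enumerating $s=2,3,4$ separately, where the paper uses the uniform bound $d_{2s}\ge d_4+2s-4$ to get $\gamma(E)\ge \frac{d_4}{2}-2>1$ in one stroke.
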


\begin{proof} By \eqref{eq4.1} and \eqref{eq3.1}, we see that $\gamma_2 = \gamma'_2 = 1$. 
Let $E$ be a bundle computing $\gamma_2$. By Lemma \ref{lem2}, we have $d_E \geq 5$. Moreover, 
$h^0(E) = \frac{d_E}{2} + 1$, implying that $h^0(E)\ge4$ and $E$ computes $\gamma_2'$.
If $E$ has no line subbundle $L$ with $h^0(L) \geq 2$, then, writing $h^0(E) = 2 + s$ with $s \geq 2$, 
it follows from Lemma \ref{lempr} that $d_E \geq d_{2s} \geq d_4 + 2s -4$.
So $\gamma(E) \geq \frac{d_4}{2} - 2 = \delta - \frac{5}{2} > 1$, a contradiction.

So $E$ must occur in an exact sequence 
\begin{equation} \label{eq3.2}
0 \ra M \ra E \ra N \ra 0
\end{equation}
with $h^0(M) \geq 2$. By Lemma \ref{lempr2}, we must have $\gamma(M)=\gamma(N)=1$ and hence $M \simeq N \simeq H$. 

We need to show that the extension \eqref{eq3.2} must be trivial.
By Lemma \ref{l2.3} it suffices to show that the map $H^0(H) \otimes H^0(K \otimes H^*) \ra H^0(K)$ is 
surjective, which is the case, since $K \simeq H^2$.
\end{proof}

\begin{prop} \label{pr3.5}
For a smooth plane sextic the bundles computing $\gamma_2 = 2$ are
$E_H$ and $H \oplus H$.
The only bundle computing $\gamma'_2 = 2$ is $H \oplus H$.
\end{prop}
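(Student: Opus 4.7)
The plan is to split the argument along the same lines as the proof of Proposition \ref{pr3.4}. Setting $\delta = 6$ in \eqref{eq3.1} gives $\gamma_1 = 2$ and $d_2 = 6$, so \eqref{eq4.1} yields $\gamma_2 = \gamma'_2 = 2$; the genus is $g = 10$ and $K \simeq H^3$, so the two line bundles computing $\gamma_1$ are $H$ and $K \otimes H^* \simeq H^2$.

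First I would handle bundles $E$ computing $\gamma_2$ but not $\gamma'_2$. Since $d_2 = 6 = 2 \gamma_1 + 2$, Corollary \ref{cor2} applies and such an $E$ must be of the form $E_L$ with $d_L = d_2 = 6$ and $h^0(L) = 3$. But by \eqref{eq3.1} the unique line bundle of degree $\delta$ with $h^0 = 3$ is $H$, so $E \simeq E_H$.

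Next I would tackle the bundles $E$ computing $\gamma'_2$, writing $h^0(E) = 2+s$ with $s \geq 2$, whence $\gamma(E) = \mu(E) - s = 2$ gives $d_E = 2s+4$. If $E$ has no line subbundle with $h^0 \geq 2$, Lemma \ref{lempr} forces $d_E \geq d_{2s} \geq d_4 + (2s-4) = 2s + 7$, a contradiction. Otherwise there is an exact sequence $0 \to M \to E \to N \to 0$ with $h^0(M) \geq 2$; Lemma \ref{lempr2} then yields $\gamma(M) = \gamma(N) = \gamma_1$ together with the lifting of all sections of $N$ to $E$, so $M, N \in \{H, H^2\}$. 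Semistability forces $d_M \leq d_N$, and the bound $\mu(E) \leq g - 1 = 9$ rules out $(H^2, H^2)$; so only $(M,N) = (H,H)$ or $(H,H^2)$ survive.

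In each surviving case the lifting requirement is controlled by Lemma \ref{l2.3}. For $(H, H^2)$ the relevant map $\alpha : H^0(H^2) \otimes H^0(K \otimes H^*) = H^0(H^2) \otimes H^0(H^2) \to H^0(H^4)$ is surjective by projective normality of $C \subset \PP^2$, so the only extension with all sections lifting is the trivial one $H \oplus H^2$, which however has $H^2$ as a subbundle of slope $12 > 9 = \mu(E)$ and is not semistable. For $(H, H)$ the analogous map $\alpha : H^0(H) \otimes H^0(H^2) \to H^0(H^3) = H^0(K)$ is again surjective by projective normality, so $E \simeq H \oplus H$, which is semistable with $h^0(E) = 6$ and $\gamma(E) = 2$. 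Combining the two parts produces the list $\{E_H, H \oplus H\}$ for $\gamma_2$ and $\{H \oplus H\}$ for $\gamma'_2$. The main obstacle is simply the verification in the sub-case $(M,N) = (H,H^2)$ that the unique extension meeting the lifting condition fails to be semistable; once the identifications $K \simeq H^3$ and the projective normality of $C$ are in hand, the rest mimics the proof of Proposition \ref{pr3.4}.
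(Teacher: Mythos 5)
Your proposal is correct and follows essentially the same route as the paper: Corollary \ref{cor2} for the bundles computing $\gamma_2$ but not $\gamma_2'$, then Lemmas \ref{lempr}, \ref{lempr2} and \ref{l2.3} combined with projective normality to reduce the $\gamma_2'$ case to $H\oplus H$. The only difference is that you spell out explicitly why $H\oplus H^2$ fails semistability, which the paper leaves implicit.
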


\begin{proof}
The fact that $\gamma_2=\gamma_2'=2$ follows from \eqref{eq4.1} and \eqref{eq3.1}. By Corollary \ref{cor2} and \eqref{eq3.1}, $E_H$ is the only bundle computing $\gamma_2$ but not $\gamma_2'$.

Suppose now that $E$ computes $\gamma_2'$ and write $h^0(E)=2+s$, $s\ge2$. If $E$ has no line subbundle with $h^0 \geq 2$, then 
Lemma \ref{lempr} implies that $d_E\ge d_{2s}$. Moreover
$$d_{2s}\ge d_4+2s-4>2s+4$$
by \eqref{eq3.1}. So $\gamma(E)>2$, a contradiction. Thus $E$ occurs in an exact sequence \eqref{eq3.2} with $h^0(M) \geq 2$; moreover, by Lemma \ref{lempr2}, both $M$ and $N$ compute $\gamma_1$.

Noting that $H$ and $H^2$ are the only line bundles computing $\gamma_1$, we have either $M\simeq N\simeq H$ or $M\simeq H$, $N\simeq H^2$. Since all sections of $N$ must lift to $E$ and $H^0(N)\otimes H^0(K\otimes M^*)\to H^0(K\otimes M^*\otimes N)$ is surjective in both cases, this allows only the split extension. Since $E$ is semistable, only $E\simeq H\oplus H$ is possible. \end{proof}

\begin{prop} \label{prop3.3}
Let $C$ be a smooth plane curve of degree $\delta \geq 7$. 
Then $\gamma_2 < \gamma'_2 = \gamma_1$ and 

{\em (i)} $E_H$ is the only bundle computing $\gamma_2$; moreover,
$E_H$ is stable with $h^0(E_H) = 3$;

{\em (ii)} $H \oplus H$ is the only bundle computing $\gamma'_2$.
\end{prop}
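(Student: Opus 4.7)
The plan is to first compute $\gamma_2$ and $\gamma_2'$ numerically from \eqref{eq4.1} and \eqref{eq3.1}, then handle parts (i) and (ii) separately.

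From \eqref{eq4.1}, $\gamma_2=\min\{\gamma_1,d_2/2-1\}=\min\{\delta-4,\delta/2-1\}$; since $\delta\ge 7$, the second term is smaller, so $\gamma_2=\delta/2-1<\delta-4=\gamma_1$. Also $\gamma_2'\ge\min\{\gamma_1,d_4/2-2\}=\min\{\delta-4,\delta-5/2\}=\gamma_1$, so combined with $\gamma_2'\le\gamma_1$ we get $\gamma_2'=\gamma_1$, yielding the asserted strict inequality $\gamma_2<\gamma_2'=\gamma_1$. In particular, any bundle computing $\gamma_2$ has $\gamma(E)<\gamma_1$, hence cannot compute $\gamma_2'$.

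For (i), I would note that $d_2=\delta\le 2\delta-6=2\gamma_1+2$, so Corollary \ref{cor2} applies: any bundle computing $\gamma_2$ (necessarily not $\gamma_2'$) has the form $E_L$ for some line bundle $L$ of degree $d_2=\delta$ with $h^0(L)=3$. Since $H$ is the unique such line bundle on a plane curve, $E\cong E_H$. Stability and $h^0(E_H)=3$ follow from Lemma \ref{lem0}(ii)(iii), both of which apply because $d_2=\delta<2\delta-2=2d_1$.

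For (ii), let $E$ compute $\gamma_2'$. I split into two cases according to whether $E$ has a line subbundle $M$ with $h^0(M)\ge 2$. If not, write $h^0(E)=2+s$ with $s\ge 2$; Lemma \ref{lempr} gives $d_E\ge d_{2s}\ge d_4+2s-4$ (using $d_r>d_{r-1}$), whence $\gamma(E)=d_E/2-s\ge d_4/2-2=\delta-5/2>\gamma_1$, contradicting $\gamma(E)=\gamma_2'=\gamma_1$. So such an $M$ exists, and Lemma \ref{lempr2} forces $\gamma(M)=\gamma(E/M)=\gamma_1$ with all sections of $E/M$ lifting to $E$. The only line bundles computing $\gamma_1$ are $H$ and $K\otimes H^*\cong H^{\delta-4}$, so both $M$ and $E/M$ lie in $\{H,H^{\delta-4}\}$; semistability eliminates the case $M\cong H^{\delta-4}$, $E/M\cong H$.

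For the remaining three pairs, the map $\alpha$ of Lemma \ref{l2.3} becomes a multiplication map between global sections of powers of the hyperplane bundle $H$ (using $K\cong H^{\delta-3}$), which is surjective by the projective normality of $C\subset\PP^2$. Hence $E$ must be the trivial extension in each case. Now I would rule out two of the three candidates numerically: $E\cong H\oplus H^{\delta-4}$ has $H^{\delta-4}$ as a subbundle of degree $(\delta-4)\delta>\delta(\delta-3)/2=\mu(E)$ for $\delta\ge 7$, hence is unstable; and $E\cong H^{\delta-4}\oplus H^{\delta-4}$ has $\mu(E)=(\delta-4)\delta>(\delta^2-3\delta)/2=g-1$ for $\delta\ge 7$, so does not contribute to $\gamma_2'$. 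This leaves only $E\cong H\oplus H$, which indeed computes $\gamma_2'$ (verify $h^0=6\ge 4$, $\mu=\delta\le g-1$, and $\gamma(H\oplus H)=\delta-4=\gamma_1$). The main obstacle is simply keeping the case analysis in Case B organized and performing the numerical checks that cleanly eliminate the two spurious splittings under the hypothesis $\delta\ge 7$.
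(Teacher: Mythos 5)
Your proposal is correct and follows essentially the same route as the paper: the numerical identification of $\gamma_2$ and $\gamma_2'$ from \eqref{eq4.1} and \eqref{eq3.1}, Corollary \ref{cor2} and Lemma \ref{lem0} for (i), and for (ii) the elimination of the Paranjape--Ramanan case via Lemma \ref{lempr}, followed by Lemma \ref{lempr2}, projective normality forcing the extensions to split, and semistability leaving only $H\oplus H$. The only cosmetic difference is that you enumerate all four pairs $(M,E/M)$ and discard three, whereas the paper uses $d_M\le g-1$ to fix $M\simeq H$ at the outset; the content is identical.
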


\begin{proof} 
It follows from \eqref{eq4.1} and \eqref{eq3.1} that 
$$
\gamma_2 = \frac{\delta}{2} -1 \quad \mbox{and} \quad \gamma'_2 = \gamma_1 = \delta -4.
$$
Hence $\gamma_2 < \gamma'_2$. (i) now follows from Corollary \ref{cor2} and Lemma \ref{lem0}.

(ii) Suppose that $E$ computes $\gamma'_2$. If $h^0(E) = 2 + s$ with $s \geq 2$ and $E$ has no line subbundle $M$ with 
$h^0(M) \geq 2$, then by Lemma \ref{lempr},
$$
\gamma(E) \geq \frac{d_{2s}}{2} - s \geq \frac{d_4}{2} - 2 = \delta - \frac{5}{2} > \gamma_1,
$$
a contradiction. It follows from Lemma \ref{lempr2} that $E$ can be written as an extension
\eqref{eq3.2} with $h^0(M) \geq 2$ and $\gamma(M)=\gamma(N)=\gamma_1$; moreover all sections of $N$ lift to $E$. The proof is now completed exactly as for Proposition \ref{pr3.5}, noting that in this case either $N\simeq H$ or $N\simeq H^{\delta-4}$.
\end{proof}

\section{Exceptional curves}\label{exc}

In this section we consider curves of Clifford dimension $\ge3$, in other words curves for which neither $d_1$ nor $d_2$ computes $\gamma_1$.
\begin{lem} \label{lem4.1}
Let $C$ be a curve of Clifford dimension  $r \geq 3$. Then
$$
d_1 > \frac{d_2}{2}, \quad \frac{d_2}{2} > \frac{d_3}{3} \quad \mbox{and} \quad \gamma_1 \geq \frac{d_2}{2} -1.
$$
The last inequality is strict for $r \geq 4$.
\end{lem}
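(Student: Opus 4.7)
The plan is to pin down the gonality sequence by two-sided bounds and then combine with a lower bound on $\gamma_1$ from the structure theory of exceptional curves. For the upper bound, I would take a line bundle $M$ with $d_M=d_r=\gamma_1+2r$ and $h^0(M)=r+1$; since $M$ realises the minimum such degree it must be generated (any base point would contradict minimality), so removing $r-k$ general points produces a line bundle of degree $\gamma_1+r+k$ with $h^0\ge k+1$, giving $d_k\le\gamma_1+r+k$ for $1\le k<r$. For the lower bound, since $d_k<d_r\le g-1$ and $d_k$ does not compute $\gamma_1$ for $k<r$, the inequality $d_k-2k>\gamma_1$ forces $d_k\ge\gamma_1+2k+1$.

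Substituting these bounds into the three claimed inequalities reduces them to arithmetic:
\[
2d_1-d_2\ge\gamma_1+4-r,\qquad 3d_2-2d_3\ge\gamma_1+9-2r,\qquad 2\gamma_1+2-d_2\ge\gamma_1-r.
\]
Invoking the lower bound $\gamma_1\ge 2r-3$ for exceptional curves from \cite{elms}, each of these is non-negative in the required range and strictly positive where asserted: the first two are positive since $2r-3>r-4$ and $2r-3>2r-9$, and the third is non-negative with strict inequality precisely when $2r-3>r$, i.e., when $r\ge 4$. For $r=3$ the third inequality is an equality, consistent with the gonality sequence on complete intersections of two cubics in $\PP^3$ where $\gamma_1=3$ and $d_2=8=2\gamma_1+2$.

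The main obstacle is supplying the structural bound $\gamma_1\ge 2r-3$ from \cite{elms}: this is genuinely deeper than the elementary tools (subadditivity of $d_\bullet$, Clifford's inequality, and the base-point-free subtraction applied to $M$) used to derive the two-sided bounds on the $d_k$. Without it, the combinatorial estimates above would leave open the case of atypically small $\gamma_1$ combined with large $r$, for which the second and third inequalities could fail.
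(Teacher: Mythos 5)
Your proof is correct and follows essentially the same route as the paper: both arguments combine the lower bounds $d_k\ge\gamma_1+2k+1$ for $k<r$ (from $d_k$ not computing $\gamma_1$) with the upper bounds $d_k\le d_r-(r-k)$ and the bound $d_r\ge 4r-3$ of Eisenbud--Lange--Martens--Schreyer, which is exactly your $\gamma_1\ge 2r-3$ since $\gamma_1=d_r-2r$. The only cosmetic difference is that you parametrise by $\gamma_1$ where the paper parametrises by $d_r$, and you spell out the base-point subtraction justifying $d_k\le d_r-(r-k)$, which the paper treats as a standing fact about the gonality sequence.
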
 

\begin{proof}
According to \cite[Corollary 3.5]{elms}, $d_r \geq 4r-3$.
Since $d_1$ does not compute $\gamma_1$, we have
$$
d_1 \geq \gamma_1 + 3 = d_r -2r + 3.
$$
Since $d_2 \leq d_r - r + 2$, this gives the first inequality.
Since $d_2$ does not compute $\gamma_1$, we have 
$$
d_2 \geq \gamma_1 + 5 = d_r -2r + 5.
$$
Using the fact that $d_3 \leq d_r - r + 3$, we obtain the second inequality. For the third inequality, we have
$$
d_2 \leq d_r -r + 2 \leq 2d_r -4r + 2 = 2 \gamma_1 + 2,
$$
since $d_r \geq 4r-3 \geq 3r$. By the same reason this inequality is strict for $r \geq 4$.
\end{proof}

\begin{theorem} \label{thm4.3}
Let $C$ be of Clifford dimension $r \geq 3$. Then there is a semistable bundle $E$ of rank $2$ and
degree $d_2$ with $h^0(E) = 3$. Any such bundle computes $\gamma_2$, is of the form $E \simeq E_L$ and is stable.

If $r \geq 4$ and $d_4 > d_2 + 2$, then these are the only bundles computing $\gamma_2$. 
\end{theorem}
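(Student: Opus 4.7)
The plan is to establish the three assertions in order: first produce a bundle with the stated properties, then show any such bundle is of the form $E_L$ and computes $\gamma_2$, and finally rule out all other bundles computing $\gamma_2$ under the stated hypotheses.

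For existence, I would choose a line bundle $L$ of degree $d_2$ with $h^0(L) \geq 3$, which exists by definition of $d_2$. Since the gonality sequence is strictly increasing, $h^0(L) = 3$, and minimality of $d_2$ forces $L$ to be generated (otherwise its image would be a line bundle of smaller degree with $h^0 \geq 3$). Lemma \ref{lem4.1} gives $d_2 < 2d_1$, so parts (ii) and (iii) of Lemma \ref{lem0} apply: $E_L$ is a stable rank $2$ bundle of degree $d_2$ with $h^0(E_L) = 3$, supplying the required semistable bundle.

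Next I would take any semistable rank $2$ bundle $E$ of degree $d_2$ with $h^0(E) = 3$ and compute $\gamma(E) = \frac{d_2}{2} - 1$. Combining \eqref{eq4.1} with the bound $\gamma_1 \geq \frac{d_2}{2} - 1$ from Lemma \ref{lem4.1} shows $\gamma(E) = \gamma_2$; and since $d_2 \leq d_r - r + 2 \leq g - r - 1$ (using $d_r \leq g-1$ because $d_r$ computes $\gamma_1$) we have $\mu(E) \leq g-1$. So $E$ contributes to, hence computes, $\gamma_2$. Because $h^0(E) = 3 < 2\cdot 2$, $E$ cannot contribute to $\gamma_2'$, so Lemma \ref{lem2} gives $E \simeq E_L$, and stability follows from Lemma \ref{lem0}(ii).

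For the final assertion, assume $r \geq 4$ and $d_4 > d_2 + 2$, and let $E$ compute $\gamma_2$. Writing $h^0(E) = 2 + s$, I need to show $s = 1$; then $\gamma(E) = \gamma_2 = \frac{d_2}{2}-1$ forces $d_E = d_2$ and the previous paragraph applies. Assume $s \geq 2$. If $E$ has a line subbundle $M$ with $h^0(M) \geq 2$, Lemma \ref{lempr2} yields $\gamma(E) \geq \gamma_1$; but the last inequality of Lemma \ref{lem4.1} is strict for $r \geq 4$, so $\gamma_1 > \frac{d_2}{2} - 1 = \gamma_2$, contradicting $\gamma(E) = \gamma_2$. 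Otherwise Lemma \ref{lempr} gives $d_E \geq d_{2s}$; the equality $\gamma(E) = \gamma_2$ yields $d_E = d_2 + 2s - 2$, hence $d_{2s} \leq d_2 + 2s - 2$. Strict monotonicity of the gonality sequence (so $d_{2s} \geq d_4 + (2s-4)$ for $s \geq 2$) then forces $d_4 \leq d_2 + 2$, contradicting the hypothesis.

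The argument is essentially bookkeeping around Lemmas \ref{lem0}, \ref{lem2}, \ref{lempr}, and \ref{lempr2}; the only delicate point is handling the case $s \geq 3$ uniformly, which is achieved by invoking strict monotonicity of $d_r$. The strict form of the third inequality of Lemma \ref{lem4.1} (valid precisely when $r \geq 4$) is what lets us dispose of a large line subbundle and explains the hypothesis $r \geq 4$.
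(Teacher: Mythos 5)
Your argument is correct, and for the existence and the classification of the degree-$d_2$ bundles it follows the paper's route exactly: Lemma \ref{lem4.1} supplies $d_2<2d_1$ and $\gamma_1\ge\frac{d_2}{2}-1$, after which Lemma \ref{lem0}, \eqref{eq4.1} and Lemma \ref{lem2} do the work. (One harmless slip: $d_2\le d_r-r+2\le g-r+1$, not $g-r-1$; either bound gives $\mu(E)\le g-1$.) Where you genuinely diverge is the uniqueness assertion. The paper notes that $r\ge4$ and $d_4>d_2+2$ force $\min\left\{\gamma_1,\frac{d_4}{2}-2\right\}>\frac{d_2}{2}-1$, hence $\gamma_2'>\gamma_2$ by \eqref{eq4.1}, and then simply quotes Corollary \ref{cor2}. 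You instead take an arbitrary $E$ computing $\gamma_2$ with $h^0(E)=2+s$, $s\ge2$, and run the dichotomy of Lemmas \ref{lempr} and \ref{lempr2} by hand, using the strict inequality $\gamma_1>\frac{d_2}{2}-1$ to kill the case of a line subbundle with $h^0\ge2$ and the monotonicity bound $d_{2s}\ge d_4+2s-4$ in the Paranjape--Ramanan case. This amounts to re-proving, in the situation at hand, the second inequality of \eqref{eq4.1} together with Corollary \ref{cor2}; it is longer than the paper's two-line deduction but self-contained, and it makes visible exactly where each of the hypotheses $r\ge4$ and $d_4>d_2+2$ is used. Both arguments are sound.
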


\begin{proof}
Taking account of Lemma \ref{lem4.1}, the first part follows from Lemma \ref{lem0} and \eqref{eq4.1}.

If $r \geq 4$, we have $\gamma_1 > \frac{d_2}{2} -1$ by Lemma \ref{lem4.1}. If $d_4 > d_2 + 2$, 
then $\frac{d_4}{2} - 2 > \frac{d_2}{2}-1$.
So $\gamma'_2 > \gamma_2$ by \eqref{eq4.1}. Hence, by Corollary \ref{cor2}, these bundles of degree $d_2$ with $h^0 = 3$ are 
the only bundles computing $\gamma_2$. 
\end{proof}

Let $C$ be a curve of Clifford dimension 3. Then $C$ is a complete intersection of 2 cubics in $\PP^3$ \cite{m}.
The curve $C$ is of genus 10 with
$$
\gamma_1 = 3, \; d_1 = 6,\; d_2 = 8, \; d_3 = 9 \;\; \mbox{and} \;\; d_4 = 12.
$$
Let $H$ denote the hyperplane bundle on $C$. 

\begin{prop} \label{prop4.4}
Let $C$ be a curve of Clifford dimension $3$. Then $\gamma_2 = \gamma'_2 = \gamma_1 = 3$ and the only bundles computing $\gamma_2$ 
are the bundles $E_L$ of Theorem \ref{thm4.3} and $H \oplus H$. The only bundle computing $\gamma'_2$ is $H \oplus H$.
\end{prop}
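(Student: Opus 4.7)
The plan is to follow the same template as Propositions \ref{pr3.4}--\ref{prop3.3}. First I compute the invariants from \eqref{eq4.1}: with $\gamma_1=3$, $d_2=8$, $d_4=12$, we get $\gamma_2=\min\{3,3\}=3$ and $\gamma_2'\ge\min\{3,4\}=3$, while $\gamma_2'\le\gamma_1$ always, so $\gamma_2=\gamma_2'=\gamma_1=3$.

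Next, I split the analysis of bundles $E$ computing $\gamma_2$ according to $h^0(E)$. If $h^0(E)=3$, Lemma \ref{lem2} identifies $E$ with some $E_L$ where $d_L=d_2=8$ and $h^0(L)=3$; since $d_2<2d_1=12$, Lemma \ref{lem0} ensures stability and $h^0(E_L)=3$, and existence is guaranteed by Theorem \ref{thm4.3}. The remaining bundles computing $\gamma_2$ are those with $h^0(E)\ge4$, which are exactly the bundles computing $\gamma_2'$. So the whole problem reduces to determining the latter, and it suffices to show that $H\oplus H$ is the only bundle computing $\gamma_2'$.

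Now let $E$ compute $\gamma_2'$ with $h^0(E)=2+s$, $s\ge2$. If $E$ has no line subbundle $M$ with $h^0(M)\ge2$, Lemma \ref{lempr} yields $d_E\ge d_{2s}\ge d_4+2s-4=2s+8$, whence $\gamma(E)\ge 4>3$, a contradiction. Thus such an $M$ exists, and Lemma \ref{lempr2} forces $\gamma(M)=\gamma(E/M)=\gamma_1=3$ with all sections of $E/M$ lifting to $E$. A quick enumeration using $d_1=6$, $d_2=8$, $d_3=9$, $g-1=9$ shows that the only line bundle of degree $\le g-1$ with $h^0\ge 2$ and $\gamma=3$ is $H$; the identity $K\simeq H^2$ (from the adjunction formula for a smooth complete intersection of two cubics in $\PP^3$) means Serre duality sends $H$ to itself, so $M\simeq E/M\simeq H$.

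Finally, in the extension $0\to H\to E\to H\to 0$, Lemma \ref{l2.3} guarantees that all sections of the quotient $H$ lift to $E$ in a nontrivial extension only when $\alpha\colon H^0(H)\otimes H^0(K\otimes H^*)\to H^0(K)$ fails to be surjective. Since $K\simeq H^2$, this map is just $H^0(H)\otimes H^0(H)\to H^0(H^2)$, which is surjective by projective normality of $C\subset\PP^3$ (a complete intersection). Hence the extension splits and $E\simeq H\oplus H$; this bundle visibly computes $\gamma_2'$ since $\mu=9$, $h^0=8$, and it is semistable. The main obstacle is the enumeration step identifying the line bundles computing $\gamma_1$, but the coincidence $K\simeq H^2$ collapses Serre duality and makes this routine.
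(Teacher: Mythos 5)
Your proposal is correct and follows essentially the same route as the paper: compute $\gamma_2=\gamma_2'=\gamma_1=3$ from \eqref{eq4.1}, dispose of the $h^0=3$ case via $E_L$ (the paper invokes Corollary \ref{cor2}, you use Lemmas \ref{lem2} and \ref{lem0} directly, which is the same content), rule out bundles of type PR with Lemma \ref{lempr} since $d_4/2-2=4>3$, reduce to extensions of $H$ by $H$ via Lemma \ref{lempr2}, and split them using Lemma \ref{l2.3} and projective normality. The only difference is that you spell out the enumeration showing $M\simeq N\simeq H$ and the adjunction $K\simeq H^2$, which the paper leaves implicit.
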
 

\begin{proof}
First, by \eqref{eq4.1} we have $\gamma_2 = \gamma'_2 = \gamma_1$.
By Corollary \ref{cor2}, the bundles computing $\gamma_2$ but not $\gamma'_2$ are precisely the bundles $E_L$. Now suppose
that $E$ computes $\gamma'_2$. If $E$ has no line subbundle with $h^0 \geq 2$, then, by
Lemma \ref{lempr}, $d_E \geq d_{2s}$ with $s \geq 2$ and hence
$$
\gamma(E) \geq \frac{d_{2s}}{2} - s \geq \frac{d_4}{2} - 2 = 4,
$$
a contradiction. 

So, by Lemma \ref{lempr2}, $E$ fits into an extension \eqref{eq3.2} with $\gamma(M) = \gamma(N) = 3$. The only possibility is $M \simeq N \simeq H$ and all sections of $N$ must lift to $E$.
It follows from the projective normality of $C$ in $\PP^3$ and Lemma \ref{l2.3} that $E\simeq H \oplus H$. 
\end{proof}

Now let $C$ be a curve of Clifford dimension $r = 4$. Then $C$ has genus 14 and has a semicanonical projectively 
normal embedding into $\PP^4$ given by the unique line bundle $H$ of degree 13 computing $\gamma_1$. In particular
we have
$$
\gamma_1 = 5, \; d_1 = 8 \; \mbox{and} \; d_4 = 13.
$$

\begin{lem}  \label{lem4.5}
$d_2 = 10, \; d_3 = 12$ and $d_6 = 18$.
\end{lem}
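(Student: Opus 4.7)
The plan is to bound each of $d_2$, $d_3$, and $d_6$ from below and above, using the inequalities of Section~\ref{prelim} (especially \eqref{eq0}, the strict monotonicity $d_r < d_{r+1}$, and subadditivity), Serre duality, and the structure of the semicanonical embedding $|H| \colon C \hookrightarrow \PP^4$.

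I would dispose of the lower bounds uniformly. Since $C$ has Clifford dimension $4$, neither $d_2$ nor $d_3$ computes $\gamma_1$; combined with \eqref{eq0} (and the fact that $g-1 = 13 > \gamma_1 + 2r$ for $r = 2, 3$), this gives $d_2 \geq 10$ and $d_3 \geq 12$. Also, \eqref{eq0} gives $d_6 \geq \min\{\gamma_1 + 12, g+5\} = 17$.

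Next, the easy upper bounds. Monotonicity $d_3 < d_4 = 13$ forces $d_3 \leq 12$, hence $d_3 = 12$. For $d_6 \leq 18$, take $L$ with $d_L = d_1 = 8$ and $h^0(L) = 2$; Riemann-Roch gives $\deg(K \otimes L^*) = 18$ and $h^0(K \otimes L^*) = h^1(L) = 7$. If $d_6 = 17$ held, a degree-$17$ bundle $M$ with $h^0(M) = 7$ would Serre-dualize to a degree-$9$ bundle with $h^0 \geq 3$, contradicting $d_2 \geq 10$. Hence $d_6 = 18$.

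The remaining, and most delicate, step is $d_2 \leq 10$. In the embedding $|H|$, any effective divisor $D$ of degree $3$ on $C$ supported on a line $\ell \subset \PP^4$ satisfies $h^0(H \otimes \cO(-D)) = 3$ (sections of $H$ vanishing on $D$ correspond bijectively to hyperplanes containing $\ell$, which form a $3$-dimensional space) and $\deg(H \otimes \cO(-D)) = 10$. Thus $d_2 \leq 10$ reduces to exhibiting a trisecant line to $C \subset \PP^4$. This is the main obstacle: it is not a formal consequence of the numerical inequalities of Section~\ref{prelim} (which only yield $d_2 \leq 11$ via $d_2 < d_3$), and it requires projective-geometric input specific to exceptional curves of Clifford dimension~$4$. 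I would try to extract it from the fact that $C$ lies on a unique quadric $Q \subset \PP^4$ (since $h^0(\cO_{\PP^4}(2)) - h^0(K) = 15 - 14 = 1$ by projective normality), together with the structural results on exceptional curves in \cite{elms}. Equivalently, via Serre duality, $d_2 \leq 10$ amounts to producing a line bundle of degree $16$ with $h^0 \geq 6$; and the base-point-free pencil trick applied to the $d_1$-bundle $L$ reduces this further to showing $h^0(L^2) \geq 6$ for some $L$ of degree $8$ with $h^0(L) = 2$.
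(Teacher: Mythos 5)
Your lower bounds $d_2\ge 10$, $d_3\ge 12$ (from the fact that no line bundle of degree $<13$ can compute $\gamma_1$ on a curve of Clifford dimension $4$), the deduction $d_3=12$ from $d_3<d_4=13$, and the Serre-duality argument for $d_6=18$ are all correct and agree in substance with the paper's proof. You have also correctly isolated the crux: $d_2\le 10$ is equivalent to the existence of a trisecant line to $C\subset\PP^4$, and a trisecant $\ell$ with $D=C\cap\ell$ gives $h^0(H(-D))=3$, $\deg(H(-D))=10$, exactly as in the paper (``projecting from the trisecant'').

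The genuine gap is that you never establish the existence of a trisecant. Your proposal stops at ``I would try to extract it from the fact that $C$ lies on a unique quadric\dots together with the structural results in \cite{elms},'' which is a plan, not an argument; and it is not clear that this route closes. The paper's proof supplies the missing input by an enumerative computation: Macdonald's secant plane formula (\cite[p.~351]{acgh}) applied to the degree-$13$, genus-$14$ embedding $C\subset\PP^4$ shows that the (virtual) number of trisecant lines is nonzero, hence trisecants exist. This is a genuinely different kind of input from anything in Section~\ref{prelim} --- as you correctly observe, the numerical inequalities alone only give $d_2\le 11$ --- and without it (or some substitute, e.g.\ actually producing a degree-$8$ pencil $L$ with $h^0(L^2)\ge 6$ as in your last sentence, which you also do not do) the equality $d_2=10$ is not proved. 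Everything downstream in your write-up is fine, but this one step must be filled in, and the natural way to fill it is the secant plane formula.
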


\begin{proof}
From Macdonald's secant plane formula (see \cite[p. 351]{acgh}) we see that there exist trisecants 
of $C$ in $\PP^4$. Projecting from any such trisecant, we obtain a line bundle of degree 10 with $h^0 \geq 3$.
Since no line bundle of degree $< 13$ can compute $\gamma_1$, there does not exist a line bundle of degree 9 
with $h^0 \geq 3$ or of degree 11 with $h^0 \geq 4$. Hence $d_2 = 10$ and $d_3 = 12$. The last assertion follows by duality.
\end{proof}

\begin{prop} \label{prop4.6}
Let $C$ be a curve of Clifford dimension $4$. Then

{\em (i)} $\gamma_2 = 4$ and is computed by a bundle of the form $E_L$ with $L$ a line bundle of degree $10$ with $h^0(L) =3$
and by no other bundles;

{\em (ii)} $\gamma'_2 = \frac{9}{2}$ and is computed by a unique bundle $E$ of degree $13$ with $\det E = H$. 
\end{prop}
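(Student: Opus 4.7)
Part (i) follows directly from Theorem \ref{thm4.3}. With $r = 4$, Lemma \ref{lem4.5} gives $d_2 = 10$ and $d_4 = 13$, so the hypothesis $d_4 > d_2 + 2$ holds and the theorem says the bundles computing $\gamma_2$ are precisely the $E_L$ for line bundles $L$ of degree $10$ with $h^0(L) = 3$; the formula \eqref{eq4.1} then gives $\gamma_2 = d_2/2 - 1 = 4$.

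For part (ii), \eqref{eq4.1} yields the lower bound $\gamma_2' \geq \min\{\gamma_1, d_4/2 - 2\} = \min\{5, 9/2\} = 9/2$. Let $E$ compute $\gamma_2'$ and write $h^0(E) = 2 + s$ with $s \geq 2$. If $E$ has a line subbundle with $h^0 \geq 2$, Lemma \ref{lempr2} gives $\gamma(E) \geq \gamma_1 = 5 > 9/2$. Otherwise Lemma \ref{lempr} gives $d_E \geq d_{2s}$ and $h^0(\det E) \geq 2s + 1$, while $\gamma(E) = d_E/2 - s$. For $s = 3$, $d_E \geq d_6 = 18$ forces $\gamma(E) \geq 6$; for $s \geq 4$, the bound \eqref{eq0} on $d_{2s}$ yields $\gamma(E) > 5$. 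So if $\gamma(E) \leq 9/2$, then necessarily $s = 2$, $d_E = d_4 = 13$ and $h^0(\det E) \geq 5$, forcing $\det E = H$ (the unique line bundle of degree $13$ with $h^0 \geq 5$). Note also that $d_E = 13$ is odd, so semistability is equivalent to stability.

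The remaining task --- and the main difficulty --- is to exhibit such a bundle (which will give $\gamma_2' = 9/2$) and to prove its uniqueness. For existence, I would try to construct $E$ as a non-split extension
$$
0 \to \mathcal{O}_C \to E \to H \to 0
$$
with class $\xi \in \Ext^1(H, \mathcal{O}_C) = H^1(H^{-1})$ chosen so that the cup-product map $\cup \xi \colon H^0(H) \to H^1(\mathcal{O}_C)$ has rank exactly $2$, producing $h^0(E) = 1 + 3 = 4$. Via Serre duality this is equivalent to finding a $3$-dimensional subspace $W \subset H^0(H)$ with $W \cdot H^0(H^2) \subsetneq H^0(H^3)$, a determinantal condition closely tied to the relation $H^2 = K$ and to the projective normality of $C \subset \PP^4$.

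The hard part is proving this determinantal locus of $\xi$ is both non-empty (existence) and zero-dimensional modulo scaling (uniqueness). Both statements should follow from a careful study of the minimal free resolution of the ideal of $C \subset \PP^4$ for a Clifford-dimension-$4$ curve, or alternatively from a rigidity argument exploiting the self-duality $E \simeq E^* \otimes H$ that comes from $\det E = H$. Once existence is in hand, the analysis above shows every bundle computing $\gamma_2'$ must be this unique $E$, completing the proof.
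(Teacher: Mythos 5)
Your part (i) and the elimination argument in part (ii) match the paper: the lower bound $\gamma_2'\ge\frac92$ from \eqref{eq4.1}, the exclusion of subpencils via Lemma \ref{lempr2}, and the exclusion of $s\ge3$ via Lemma \ref{lempr} and $d_6=18$ are exactly what the paper does, and your conclusion that any bundle computing $\gamma_2'$ with $\gamma(E)\le\frac92$ must have $s=2$, $d_E=13$ and $\det E=H$ is correct.

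The genuine gap is the existence and uniqueness of the bundle $E$, which you explicitly leave as a sketch (``I would try to construct\dots'', ``should follow from\dots''). This is not a minor omission: without existence one cannot conclude $\gamma_2'=\frac92$ at all (a priori $\gamma_2'$ could equal $\gamma_1=5$ and be computed by entirely different bundles), so the heart of part (ii) is missing. The paper's route is quite different from the one you sketch: by \cite{elms} a curve of Clifford dimension $4$ lies on a K3 surface $X\subset\PP^4$ (the complete intersection of a quadric and a cubic), and \cite[Remark 3.4]{gmn} together with the uniqueness of $H$ then yields a \emph{unique} stable rank-$2$ bundle of degree $13$ with $h^0=4$ and determinant $H$; this is the same mechanism as Theorem \ref{thm8.2} of the paper, where such bundles correspond to line bundles $M$ of degree $d_4$ with $h^0(M)=5$ and $S^2H^0(M)\to H^0(M^2)$ not injective --- here $M=H$ is unique and non-injectivity is automatic since $\dim S^2H^0(H)=15>14=h^0(H^2)$. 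By contrast, your proposed construction as an extension $0\to\cO_C\to E\to H\to0$ with $h^0(E)=4$ requires a class $\xi\in H^0(H^3)^*$ annihilating $W\cdot H^0(H^2)$ for some $3$-dimensional $W\subset H^0(H)$; since $h^0(H^3)=26<3\cdot h^0(H^2)=42$, no dimension count gives this, it presupposes that $E$ has a nowhere-vanishing section (which is not known), and you give no argument for either non-emptiness or rigidity of the resulting locus. You should replace this sketch by the K3/\cite{gmn} argument.
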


\begin{proof}
(i) Since $d_4 > d_2 + 2$, this is included in Theorem \ref{thm4.3}.

(ii) According to \cite{elms} $C$ is contained in a K3-surface $X$ which is embedded by a complete linear 
system into $\PP^4$. Hence $X$ is the complete intersection of a quadric and a cubic in $\PP^4$.
It follows from \cite[Remark 3.4]{gmn} and the uniqueness of $H$ that there exists a unique stable rank-2 
bundle $E$ of degree 13 with $h^0(E) = 4$ and that $\det E = H$. Certainly $\gamma(E) = \frac{9}{2}$.
So by \eqref{eq4.1},  $\gamma'_2 = \frac{9}{2}$ and is computed by $E$.

Let $E$ be any bundle of degree $> 13$ computing $\gamma'_2$. Write $h^0(E) = 2 + s$ with $s \geq 3$ and $d_E = 9 + 2s$.
If $E$ has a subbundle $M$ with $h^0(M) \geq 2$, then, by Lemma \ref{lempr2}, 
$\gamma(E) \geq \gamma_1$, a contradiction. So Lemma \ref{lempr} applies to give  
$$
d_E \geq d_{2s}.
$$
This fails for $s = 3$, since $d_6 = 18$, and hence for all $s \geq 3$.
\end{proof}

\begin{rem}
Since $\gamma'_2 < \gamma_1$, this gives a new example of a curve for which Mercat's conjecture fails.
\end{rem}

Now suppose that $C$ is of Clifford dimension $r \geq 5$ and of genus $g = 4r-2$. This implies that
\begin{equation}  \label{e4.2}
d_r = 4r-3, \;\; \gamma_1 = 2r-3, \;\; d_1 = 2r \;\;\mbox{and} \;\; d_{r-1} = 4r-4.
\end{equation}
For the first equation note that $d_r \leq g-1$ by the definition of $\gamma_1$ and $d_r \geq 4r-3$ by \cite[Corollary 3.5]{elms}.
The last 2 equalities come from the fact that the Clifford dimension of $C$ is $r$. According to \cite{elms}, 
there exist 
curves of this type for any $r$. Let $H$ denote the line bundle computing $\gamma_1$. Then $H$ gives a non-degenerate 
embedding of $C$ into $\PP^r$ (note that $C$ in $\PP^r$ must be smooth, since otherwise projection from a singular 
point would give Clifford dimension $< r$). 

Now consider the canonical map
$$
S^2H^0(H) \ra H^0(H^2).
$$
We have $\dim S^2H^0(H) = \frac{1}{2}(r+1)(r+2)$ and $h^0(H^2) \leq 4r-2$. It follows that $C$ is contained in at least
$$
\frac{1}{2}(r+1)(r+2) - 4r + 2= \frac{1}{2}(r^2 - 5r + 6) = { r-2 \choose 2}
$$
independent quadrics.

Therefore a dimensional computation shows that $C$ is contained in a quadric of rank $\leq 5$. In fact, $C$ cannot 
be contained in a quadric of rank $\leq 4$, since otherwise the systems of maximal linear subspaces on the quadric 
would give pencils $L_1, L_2$ on $C$ such that $H \simeq L_1 \otimes L_2$. This is impossible since $d_r < 2d_1$.
So $C$ lies on a quadric $q$ of rank 5. 
As such, $q$ contains a 3-dimensional system of $(r-3)$-planes  all of which contain the vertex of $q$.

If $C$ does not meet the vertex of $q$, then through each point of $C$ there is only a 1-dimensional system of $(r-3)$-planes. 
So there exists an $(r-3)$-plane not meeting $C$.

If $C$ does meet the vertex, it does so in at most a finite number of points. It follows that by projection we can obtain 
a non-degenerate morphism $C \ra \PP^s$ with $s < r$ such that the image of $C$ in $\PP^s$ is contained in a quadric $q'$ of
rank 5 and does not meet the vertex of $q'$. So there exists an $(s-3)$-plane on $q'$ not meeting $C$.

\begin{lem} \label{lem4.8}
There exists a non-degenerate morphism $C \ra \PP^s$ for some $s \leq r$ such that, if $H'$ denotes the hyperplane bundle 
of $C$ in $\PP^s$, then there exists a 3-dimensional subspace $W$ of $H^0(H')$ such that 
\begin{itemize}
\item $W$ generates $H'$;
\item the kernel $N$ of the linear map $W \otimes H^0(H') \ra H^0(H'^2)$ has dimension $\geq 4$.
\end{itemize}
\end{lem}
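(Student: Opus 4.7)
The plan is to take $W \subset V := H^0(\mathbb{P}^s, \mathcal{O}(1))$ to be the $3$-dimensional space of linear forms vanishing on the $(s-3)$-plane $\Lambda$ produced in the discussion preceding the lemma. Since $C \to \mathbb{P}^s$ is non-degenerate, the restriction map $V \to H^0(H')$ is injective, so $W$ may be regarded as a $3$-dimensional subspace of $H^0(H')$. That $W$ generates $H'$ is immediate: its base locus on $C$ is $C \cap \Lambda = \emptyset$ by the choice of $\Lambda$.

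For the kernel estimate I would exploit two independent sources of relations. The first is the Koszul complex resolving $\mathcal{O}_\Lambda$ on $\mathbb{P}^s$: since $\Lambda$ is a linear (hence complete-intersection) subspace cut out by $W$, twisting by $\mathcal{O}(2)$ and taking global sections yields the exact sequence
\begin{equation*}
0 \to \Lambda^2 W \to W \otimes V \xrightarrow{\mu} S^2 V \to H^0(\mathcal{O}_\Lambda(2)) \to 0,
\end{equation*}
so $\dim \ker \mu = 3$. The second source is the rank-$5$ quadric $q'$, which by construction contains both $\Lambda$ and the image of $C$. Because $q' \in I(\Lambda)_2 = \IM \mu$, pick any $\xi \in W \otimes V$ with $\mu(\xi) = q'$; since $q'|_C = 0$, this $\xi$ lies in the kernel of the composition $W \otimes V \to S^2 V \to H^0(H'^2)$. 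It is independent of $\ker \mu$, since $\mu(\xi) = q' \neq 0$, so $\Lambda^2 W$ together with $\xi$ span a subspace of dimension at least $4$ in $\ker(W \otimes V \to H^0(H'^2))$.

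Finally, the natural map $W \otimes V \to W \otimes H^0(H')$ is $\mathrm{id}_W$ tensored with the injection $V \hookrightarrow H^0(H')$ and is therefore itself injective. Consequently the $4$-dimensional subspace constructed above embeds into the kernel $N$ of $W \otimes H^0(H') \to H^0(H'^2)$, giving $\dim N \geq 4$, as required.

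The main subtlety is recognising that the Koszul relations alone give only a $3$-dimensional kernel, so the fourth dimension must be supplied by a genuinely new relation; the geometric fact established before the lemma — that there is a rank-$5$ quadric on $\mathbb{P}^s$ containing both $\Lambda$ and $C$ — is precisely what furnishes that extra relation.
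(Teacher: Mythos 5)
Your proposal is correct and follows essentially the same route as the paper: the paper also takes $W$ to be the linear forms vanishing on the $(s-3)$-plane, notes that the kernel of $W \otimes H^0(H') \to S^2H^0(H')$ is $\bigwedge^2 W$ (your $3$-dimensional Koszul part), and obtains the fourth dimension from a preimage of the quadric $q'$, which lies in $W\cdot H^0(H')$ because $\Pi\subset q'$ and restricts to zero on $C$. The only cosmetic difference is that you work inside $W\otimes V$ and then inject into $W\otimes H^0(H')$, whereas the paper argues directly in $W\otimes H^0(H')$.
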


\begin{proof}
From the previous discussion we obtain the morphism $C \ra \PP^s$ such that $C$ in $\PP^s$ is contained in a quadric $q'$ of rank 5 
and there exists an $(s-3)$-plane $\Pi$ on $q'$ not meeting $C$. 

Now let $W$ be the 3-dimensional subspace of $H^0(H')$  which annihilates the $(s-2)$-dimensional subspace of $H^0(H')^*$ 
defined by $\Pi$.
Since $\Pi$ does not meet $C$, it follows that $W$ generates $H'$. Moreover, since $\Pi$ lies on $q'$, the image of 
$W \otimes H^0(H')$ in $S^2H^0(H')$ contains the 1-dimensional subspace corresponding to the quadric $q'$. It follows that
the image of $N$ in $S^2H^0(H')$ has dimension at least 1. Since the kernel of the map $N \ra S^2H^0(H')$ is $\bigwedge^2 W$, 
it follows that $\dim N \geq 4$.
\end{proof}

\begin{theorem} \label{thm4.9}
Let $C$ be a curve of Clifford dimension $r \geq 5$ of genus $g = 4r-2$.
Then there exists a stable bundle $E$ of rank $2$ and degree $\leq 4r-3$ on $C$ with $h^0(E) \geq 4$. In particular 
$$
\gamma'_2 \leq \gamma(E) < \gamma_1.
$$ 
\end{theorem}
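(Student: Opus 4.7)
The plan is to build $E$ directly from the data produced by Lemma \ref{lem4.8}. Let $W \subset H^0(H')$ be the three-dimensional subspace that generates $H'$. Since $W$ generates $H'$, the evaluation map gives a short exact sequence
$$
0 \ra F \ra W \otimes \mathcal{O}_C \ra H' \ra 0
$$
in which $F$ is a rank-$2$ subbundle of a trivial bundle with $\det F \simeq H'^*$. My candidate is $E := F^*$, which has rank $2$, determinant $H'$ and degree $\deg H' \leq \deg H = 4r-3$.

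To verify $h^0(E) \geq 4$, I would use the rank-$2$ identity $F^* \simeq F \otimes \det F^*$ to rewrite $E \simeq F \otimes H'$. Twisting the evaluation sequence by $H'$ and taking global sections gives
$$
0 \ra H^0(E) \ra W \otimes H^0(H') \ra H^0(H'^2),
$$
so that $H^0(E)$ is identified with the kernel $N$ of Lemma \ref{lem4.8}, and hence $h^0(E) \geq \dim N \geq 4$.

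The main work, and the step I expect to be the real obstacle, is to show that $E$ is stable; here the inequality $d_1 = 2r > \mu(E)$ will do all the heavy lifting. Two observations are needed. First, dualising the evaluation sequence yields $0 \ra H'^* \ra W^* \otimes \mathcal{O}_C \ra E \ra 0$, so $E$ is globally generated and hence every line-bundle quotient of $E$ is generated. Second, taking $H^0$ of the original evaluation sequence, $h^0(E^*) = h^0(F)$ equals the kernel of the inclusion $W \hookrightarrow H^0(H')$, so $h^0(E^*) = 0$; equivalently $E$ admits no nonzero morphism to $\mathcal{O}_C$. Now suppose for contradiction that $M \subset E$ is a line subbundle with $\deg M \geq \mu(E)$. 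Then the line-bundle quotient $N := E/M$ is generated and satisfies $\deg N \leq \mu(E) \leq (4r-3)/2 < 2r = d_1$. If $\deg N > 0$, then $N$ generated forces $h^0(N) \geq 2$ and so $\deg N \geq d_1$, a contradiction; if $\deg N = 0$, then $N \simeq \mathcal{O}_C$, contradicting $h^0(E^*) = 0$. Therefore $E$ is stable.

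To finish, $\mu(E) \leq (4r-3)/2 < 4r-3 = g-1$ and $h^0(E) \geq 4$, so $E$ contributes to $\gamma'_2$. A one-line computation
$$
\gamma(E) = \mu(E) + 2 - h^0(E) \leq \frac{4r-3}{2} - 2 = 2r - \frac{7}{2} < 2r - 3 = \gamma_1
$$
then yields $\gamma'_2 \leq \gamma(E) < \gamma_1$, as required.
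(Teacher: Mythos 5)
Your construction is exactly the paper's: you define $E$ by the evaluation sequence $0 \ra E^* \ra W \otimes \mathcal{O}_C \ra H' \ra 0$ from Lemma \ref{lem4.8}, identify $H^0(E)$ with the kernel $N$ after twisting by $H'$, and prove stability by combining global generation with $h^0(E^*)=0$ and the bound $\mu(E) < d_1 = 2r$. The argument is correct and matches the paper's proof step for step (you merely spell out the quotient-line-bundle case analysis that the paper leaves implicit).
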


\begin{proof}
Let $H'$ and $W$ be as in Lemma \ref{lem4.8}.
Define the bundle $E^*$ by the exact sequence
\begin{equation}  \label{eq4.3}
0 \ra E^* \ra W \otimes \cO_C \ra H' \ra 0.
\end{equation}
Tensoring by $H'$ and noting that $H' \simeq \det E$ and hence $E^* \otimes H' \simeq E$, we obtain
$$
0 \ra E \ra W \otimes H' \ra H'^2 \ra 0.
$$     
This implies $H^0(E) \simeq N$ and so $h^0(E) \geq 4$. Note that $d_E = d_{H'} \leq 4r-3$.

From \eqref{eq4.3} we get that $h^0(E^*) = 0$ and $E$ is generated. Hence any quotient line bundle $L$ of $E$ has $h^0(L) \geq 2$.
So $d_L \geq d_1 = 2r$ and $E$ is stable. Note that 
$$
\gamma(E) \leq \frac{4r-3}{2} -2 = 2r - \frac{7}{2} < \gamma_1.
$$
\end{proof}

\begin{cor} \label{cor4.10}
Let $C$ be a curve of Clifford dimension $r=5$. Then
$$
6 \leq \gamma'_2 \leq \frac{13}{2}.
$$
Moreover, if a bundle $E$ computes $\gamma'_2$, then $h^0(E) = 4$.
\end{cor}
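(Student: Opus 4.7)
The plan is to split the statement into the bounds $6 \le \gamma'_2 \le \tfrac{13}{2}$ and the claim $h^0(E)=4$, and to run the standard Paranjape--Ramanan dichotomy (Lemmas \ref{lempr} and \ref{lempr2}) on any bundle computing $\gamma'_2$, handling the borderline case $h^0(E)=5$ by a short Serre-duality argument that exploits $d_4=16$.

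For the upper bound I would simply cite Theorem \ref{thm4.9}: the stable rank-$2$ bundle $E$ it produces satisfies $d_E \le 4r-3 = 17$ and $h^0(E) \ge 4$, hence $\gamma(E) \le \tfrac{17}{2}-2 = \tfrac{13}{2}$, so $\gamma'_2 \le \tfrac{13}{2}$. For the lower bound I would plug the numerical data from \eqref{e4.2} for $r=5$ (so $\gamma_1 = 7$ and $d_4 = 4r-4 = 16$) into \eqref{eq4.1}, giving $\gamma'_2 \ge \min\{\gamma_1, d_4/2-2\} = \min\{7,6\} = 6$.

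For the ``moreover'' claim let $E$ compute $\gamma'_2$ and write $h^0(E) = 2+s$ with $s\ge 2$; the goal is to show $s=2$. If $E$ had a line subbundle $M$ with $h^0(M)\ge 2$, Lemma \ref{lempr2} would force $\gamma(E)\ge \gamma_1 = 7 > \tfrac{13}{2} \ge \gamma'_2$, a contradiction; so Lemma \ref{lempr} applies and yields $d_E \ge d_{2s}$. For $s\ge 4$, the bound \eqref{eq0} gives $d_{2s}\ge \min\{\gamma_1+4s,\, g+2s-1\} = \min\{7+4s,\,17+2s\}\ge 23$, hence $\gamma(E) \ge \tfrac{d_{2s}}{2}-s \ge \tfrac{15}{2} > \tfrac{13}{2}$, contradiction. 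So only $s=3$ must be ruled out.

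The main obstacle is precisely the case $s=3$, where \eqref{eq0} gives only $d_6\ge 19$ and hence merely $\gamma(E)\ge \tfrac{13}{2}$, without strict inequality. To strengthen this I would use Serre duality together with $d_4 = 16$: if a line bundle $L$ with $d_L=19$ satisfied $h^0(L)\ge 7$, then by Riemann--Roch $h^0(K\otimes L^*) = h^1(L) = h^0(L) - (d_L - g + 1) \ge 7-2 = 5$, producing a line bundle of degree $2g-2-19 = 15$ with at least $5$ sections, i.e.\ a $g^4_{15}$. This contradicts $d_4 = 16$, so in fact $d_6\ge 20$. Then Lemma \ref{lempr} gives $d_E \ge 20$ and $\gamma(E)\ge \tfrac{20}{2}-3 = 7 > \tfrac{13}{2}$, contradiction. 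Hence $s=2$, i.e.\ $h^0(E)=4$.
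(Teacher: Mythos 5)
Your proposal is correct and follows essentially the same route as the paper: upper bound from the bundle of Theorem \ref{thm4.9}, lower bound from \eqref{eq4.1} with $d_4=16$, and the Paranjape--Ramanan dichotomy plus the Serre-duality bound $d_6\ge 20$ (which the paper states in one line and you rightly spell out) to exclude $s\ge 3$. The only cosmetic difference is that you split off $s\ge 4$ via \eqref{eq0} before treating $s=3$, whereas the paper handles all $s\ge3$ uniformly from $d_F\le 13+2s$ versus $d_{2s}\ge 14+2s$.
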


\begin{proof}
According to \cite{elms}, $C$ is of genus 18. The bundle $E$ constructed in the theorem has $\gamma(E) \leq \frac{13}{2}$.
According to \eqref{e4.2}, $d_4 = 16$. So by \eqref{eq4.1}, $\gamma'_2 \geq 6$. 

Let $F$ be a bundle computing $\gamma'_2$.
If $h^0(F) = 2 + s$ with $s \geq 3$, then Lemmas \ref{lempr} and \ref{lempr2} imply that $d_F \geq d_{2s}$. Since 
$\gamma'_2 \leq \frac{13}{2}$, we have $d_F \leq 13 + 2s$. This contradicts the fact that $d_6 \geq 20$, which follows by
duality from the fact that $d_4 = 16$. 
\end{proof}
\begin{rem}\label{rmk4.10}
A curve $C$ of Clifford dimension $5$ has $\gamma_2'=6$ if and only if there exists a line bundle $M$ of degree $d_4=16$ with $h^0(M)=5$ and $S^2H^0(M)\to H^0(M^2)$ non-injective. This follows from \cite[Theorem 3.2]{gmn} and Proposition \ref{prop8.5} below.
From our previous discussion, the curve $C$ in $\PP^5$ lies on a number of quadric cones; if it passes through the vertex of one of these cones, then projection from this vertex gives the required bundle $M$. We do not know whether this can happen.
\end{rem}

\section{Hyperelliptic, trigonal and tetragonal curves}\label{htt}

From now on, we consider curves of Clifford dimension $1$. These are also known as $k$-gonal curves, 
where $k=d_1=\gamma_1+2$. In this section, we study the cases $2\le k\le4$, in other words 
hyperelliptic, trigonal and tetragonal curves. For these curves, it follows from \eqref{eq4.1} that
$$
\gamma_2 = \gamma'_2 = \gamma_1.
$$

For hyperelliptic curves it is well known that the only bundles computing $\gamma_2 = \gamma'_2 =0$ are the bundles 
$$
H^r \oplus H^r
$$
where $H$ is the hyperelliptic line bundle and $1 \leq r \leq \frac{g-1}{2}$ \cite[Proposition 2]{re}. 

\begin{prop} \label{pr5.1}
Let $C$ be a trigonal curve of genus $g \geq 5$ and denote by $T$ the trigonal line bundle. Then the bundles computing 
$\gamma_2$ are
\begin{itemize}
\item $T \oplus T$;
\item possibly stable bundles fitting into a non-trivial extension
\begin{equation} \label{eq5.1}
0 \ra T \ra E \ra K \otimes T^* \ra 0
\end{equation}
with $h^0(E) = h^0(T) + h^0(K \otimes T^*) =g$.
\end{itemize}
In particular every bundle computing $\gamma_2$ also computes $\gamma'_2$.
\end{prop}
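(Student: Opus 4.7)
The plan is to reduce $E$ to an extension $0 \to M \to E \to N \to 0$ with $M$ a line subbundle having $h^0(M) \geq 2$, then classify the possibilities for $M$ and $N$, and finally handle the two resulting cases. By \eqref{eq4.1} we have $\gamma_2 = \gamma'_2 = \gamma_1 = 1$, so any $E$ computing $\gamma_2$ satisfies $d_E = 2h^0(E) - 2$. If $E$ had no line subbundle with $h^0 \geq 2$, writing $h^0(E) = 2+s$ with $s \geq 1$, Lemma \ref{lempr} would force $d_E = 2s+2 \geq d_{2s}$, contradicting the bound $d_{2s} \geq \min(4s+1,\, g+2s-1) \geq 2s+3$ from \eqref{eq0} (valid for $g \geq 5$). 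So such an $M$ exists, and Lemma \ref{lempr2} gives the extension \eqref{eq3.2} with $\gamma(M)=\gamma(N)=1$ and all sections of $N=E/M$ lifting.

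To identify $M$ and $N$, I would first establish that $T$ is the unique line bundle computing $\gamma_1$ on any trigonal curve of genus $g \geq 5$. Any competing $\gamma_1$-computer would be a $g^r_{2r+1}$ with $r \geq 2$ and $2r+1 \leq g-1$, and standard arguments on trigonal curves rule these out: a base-point-free $g^2_5$ forces a plane-quintic model of the curve, incompatible with gonality $3$ for $g \neq 5$, while on $g=5$ the only such bundle is $K\otimes T^*$ of degree $5>g-1$, failing to contribute to $\gamma_1$; analogous compositions rule out larger $r$. Semistability gives $d_M \leq d_E/2 \leq g-1$, so $M \simeq T$. For $N$: if $d_N \leq g-1$, the same uniqueness forces $N \simeq T$; otherwise $K \otimes N^*$ has $h^0 \geq 2$, degree $\leq g-3$, and $\gamma = 1$, so $K \otimes N^* \simeq T$ and $N \simeq K \otimes T^*$, giving $d_E = 2g-2$ and $h^0(E)=g$.

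In the case $M \simeq N \simeq T$ we have $d_E=6$ and $h^0(E)=4$. Lemma \ref{mult} with $F=G=T$ gives the codimension of the image of $\alpha$ as $h^0(T^2) - 3$; since $\deg T^2 = 6 < 2g-2$ for $g \geq 5$, $T^2 \not\simeq K$, so Clifford's theorem together with the base-point-free pencil trick yields $h^0(T^2)=3$. Hence $\alpha$ is surjective, and Lemma \ref{l2.3} forces $E \simeq T \oplus T$. In the case $M \simeq T$, $N \simeq K \otimes T^*$, the split extension fails semistability since $\mu(K \otimes T^*) = 2g-5 > g-1$, so $E$ must be a non-trivial extension \eqref{eq5.1}. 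To confirm stability: a destabilizing sub-line-bundle $L$ of slope $g-1$ would either have $h^0(L) \leq 1$, forcing $h^0(E/L) \geq g-1$ and violating Clifford on the degree-$(g-1)$ line bundle $E/L$, or $h^0(L) \geq 2$, in which case Lemma \ref{lempr2} and the uniqueness argument force $L \simeq T$, contradicting $\deg L = g-1$. Since $h^0(E) \geq 4$ in both cases, $E$ also computes $\gamma'_2$.

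The main obstacle is the uniqueness of $T$ as the $\gamma_1$-computing line bundle on every trigonal curve of genus $g \geq 5$: this is classical but requires a genuine case analysis to exclude accidental $g^r_{2r+1}$'s and their dualization; once this is in place, the remainder is a straightforward application of Lemmas \ref{lempr}, \ref{lempr2}, \ref{mult}, and \ref{l2.3}.
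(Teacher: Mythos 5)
Your proof is correct in outline and reaches the right classification, but it takes a genuinely different route from the paper at the key step. The paper never passes through Lemma \ref{lempr2} or the uniqueness of $T$ among line bundles computing $\gamma_1$; instead it quotes the trigonal bound $h^0(L)\le \frac{d_L}{3}+1$ for $d_L\le g-1$ (\cite[Remark 4.5(b)]{cl}) and its Serre-dual version for $d_L>g-1$, and compares $h^0(M)+h^0(N)$ with $h^0(E)=\frac{d_E}{2}+1$. That single numerical estimate simultaneously forces $d_M=3$, $d_N\in\{3,2g-5\}$, identifies $M\simeq T$ and $N\simeq T$ or $K\otimes T^*$, and (rerun with $d_{M'}\ge\frac{d_E}{2}$) yields stability in the second case. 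You instead invoke Lemma \ref{lempr2} to get $\gamma(M)=\gamma(N)=1$ and then reduce everything to the claim that $T$ is the only line bundle of degree $\le g-1$ computing $\gamma_1$ on a trigonal curve of genus $\ge 5$. That claim is true and classical, but your sketch of it (ruling out $g^r_{2r+1}$'s via plane-quintic models and ``analogous compositions'') is the soft spot of the argument: it is incomplete as written and slightly imprecise in the $g^2_5$ discussion. The efficient fix is to note that the claim is an immediate consequence of the very bound the paper uses: $r+1\le\frac{2r+1}{3}+1$ forces $r\le1$, so any $\gamma_1$-computing line bundle of degree $\le g-1$ is a $g^1_3$, and uniqueness of the $g^1_3$ for $g\ge5$ follows from $h^0(T\otimes T')\ge3$ plus $d_2\ge6$. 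Your handling of the $d_E=6$ case (Lemma \ref{mult} with $h^0(T^2)=3$ and Lemma \ref{l2.3}) matches the paper's, and your stability argument for the degree $2g-2$ case, via a destabilizing line subbundle and Clifford's theorem, is a valid alternative to the paper's reuse of the $h^0$ estimate. What the paper's approach buys is economy: one citable inequality replaces both the Brill--Noether classification and a separate stability check; what yours buys is a more structural picture (everything is reduced to knowing which line bundles compute $\gamma_1$), at the cost of having to actually establish that classification.
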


\begin{proof}
If $E$ computes $\gamma_2$, then $d_E \geq d_2$ by Lemma \ref{lem2}. Hence, since $\gamma_2=1$, 
$$
h^0(E) = \frac{d_E}{2} + 1 \quad \mbox{and} \quad  6 \leq d_E \leq 2g-2
$$
with $d_E$ even. 

If $E$ has no line subbundle $M$ with $h^0(M) \geq 2$, then, by Lemma \ref{lempr},
$d_E \geq d_{r}$ with $r = 2(\frac{d_E}{2} -1) = d_E -2$. A simple numerical calculation using \eqref{eq0} gives a 
contradiction.

So we have an exact sequence
\begin{equation} \label{eq5.8}
0 \ra M \ra E \ra N \ra 0
\end{equation}
with $h^0(M) \geq 2$. Since $d_M \leq g-1$ by semistability, we have (see \cite[Remark 4.5(b)]{cl})
$$
h^0(M) \leq \frac{d_M}{3} + 1.
$$

If $d_N \leq g-1$, then $h^0(N) \leq \frac{d_N}{3} + 1$. So $h^0(E) \leq h^0(M) + h^0(N) \leq \frac{d_E}{3} +2$.
This contradicts $h^0(E) = \frac{d_E}{2} + 1$ except when $d_E =6$ and in this case $M\simeq N\simeq T$.
If $d_E =6$ and \eqref{eq5.8} does not split, then according to Lemma \ref{mult}
we have $h^0(T^2) \geq 4$, which is impossible since $d_{T^2} = 6$ and $d_3 \geq 7$.

If $d_N > g-1$, then 
\begin{eqnarray*}
h^0(N) &=& h^1(N) + d_N -g + 1\\
& \leq & \frac{d_{K \otimes N^*}}{3} + d_N - g + 2 =\frac{2d_N -g + 4}{3}.
\end{eqnarray*}
So $h^0(E) \leq \frac{d_M + 2d_N -g + 4}{3} + 1$, i.e. 
$$
\frac{d_M + d_N}{2} = \frac{d_E}{2} \leq \frac{d_M + 2d_N -g + 4}{3}
$$
which is equivalent to
$$
d_N -d_M \geq 2g -8.
$$
This requires $d_M = 3, \; d_N = 2g-5$ with $h^0(M) = h^0(K \otimes N^*) = 2$. Hence $M \simeq K \otimes N^* \simeq T$ 
and \eqref{eq5.8} becomes \eqref{eq5.1}.

If $E$ is not stable, then $E$ occurs in an extension 
$$
0 \ra M' \ra E \ra N' \ra 0
$$
with $\frac{d_E}{2} \leq d_{M'} < 2g-5$. The same calculation to estimate $h^0(E)$ gives $h^0(E) < \frac{d_E}{2} + 1$,
a contradiction.
\end{proof}

\begin{rem} \label{rem5.2}
Extensions \eqref{eq5.1} exist if and only if the multiplication map
\begin{equation} \label{eq5.3}
H^0(K \otimes T^*) \otimes H^0(K \otimes T^*) \ra H^0(K^2 \otimes {T^*}^2) 
\end{equation}
is not surjective.
For $g \geq 17$ these extensions do not exist.
(The condition $g \geq 17$ is probably not best possible.)
\end{rem}

\begin{proof}

Condition \eqref{eq5.3} follows from Lemma \ref{l2.3}.
The last assertion follows from \cite[Theorem 2(a)]{gl}. For this we have to show that $K \otimes T^*$ is very ample.
In fact for any $p,q \in C$, 
$$
h^0(K \otimes T^*(-p-q)) = h^1(T(p+q)) = h^1(T) -2 = h^0(K \otimes T^*) -2,
$$
since $h^0(T(p+q)) = h^0(T) = 2$. 
\end{proof}

\begin{rem} \label{rem5.3}
Let $M$ and $N$ be the trigonal line bundles on a general curve of genus 4. Then we have $M \otimes N\simeq K$.
A modified version of the previous argument shows that the only semistable bundles computing $\gamma_2$ are 
$$
M \oplus M, \quad N \oplus N \quad \mbox{and} \quad M \oplus N.
$$
For the trigonal curves of genus 4 for which there exists only one trigonal line bundle $M$ we have also a unique
semistable bundle $E$ with $h^0(E) = 4$ which occurs as a non-trivial extension $0 \ra M \ra E \ra M \ra 0$. 
\end{rem}

\begin{lem} \label{lem5.4}
Let $C$ be a tetragonal curve. Then $d_2 \geq 6$ and there exists a bundle $E$ computing $\gamma_2$ with $h^0(E) = 3$ if
and only if $d_2 = 6$.
\end{lem}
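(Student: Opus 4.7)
My plan has three short steps. First, I would establish the lower bound $d_2 \ge 6$ directly from \eqref{eq0}. A tetragonal curve has $d_1 = 4$ and $\gamma_1 = 2$, and must have genus $g \ge 5$ (a curve of genus $4$ is always trigonal, so cannot be tetragonal), hence
\[
d_2 \;\ge\; \min\{\gamma_1 + 4,\, g+1\} \;=\; \min\{6,\, g+1\} \;=\; 6.
\]
Combined with \eqref{eq4.1} this also forces $\gamma_2 = \gamma_1 = 2$, so any rank $2$ bundle $E$ with $h^0(E) = 3$ computing $\gamma_2$ satisfies $d_E = 2\gamma_2 + 2 = 6$ by the formula $\gamma(E) = \frac12(d_E - 2(h^0(E)-r_E))$.

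For the ``if'' direction, assume $d_2 = 6$ and choose a line bundle $L$ with $d_L = 6$ and $h^0(L) \ge 3$. The condition $\gamma_1 = 2$ forces $h^0(L) = 3$ (otherwise $\gamma(L) < \gamma_1$, contradicting the definition of $\gamma_1$). Since $d_L = 6 < 2 d_1 = 8$, parts (ii) and (iii) of Lemma \ref{lem0} apply to give that $E_L$ is stable with $h^0(E_L) = 3$, and a one-line computation yields $\gamma(E_L) = \frac12(6-2) = 2 = \gamma_2$. So $E_L$ is the required bundle.

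For the converse, suppose $E$ is a rank $2$ bundle computing $\gamma_2$ with $h^0(E) = 3$. As observed above this forces $d_E = 6$, and Lemma \ref{lem2} immediately gives $d_2 \le d_E = 6$; combining with the first paragraph, $d_2 = 6$.

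There is essentially no serious obstacle in this lemma: once $\gamma_1 = 2$ and \eqref{eq0}, \eqref{eq4.1} are in place, the statement assembles from the line-bundle constructions of Lemma \ref{lem0} and the minimality statement of Lemma \ref{lem2}. The only sanity checks worth noting are that tetragonality automatically forces $g \ge 5$ (so the minimum in \eqref{eq0} equals $6$) and that $h^0(L) = 3$ rather than merely $\ge 3$ is automatic from $\gamma_1 = 2$ via Clifford.
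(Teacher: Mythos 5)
Your proof is correct and follows essentially the same route as the paper: $d_2\ge 6$ because $d_2\le 5$ would force $\gamma_1\le 1$ (the paper states this directly, you derive it via \eqref{eq0} together with $g\ge5$, which amounts to the same thing), the converse via Lemma \ref{lem2} forcing $d_E=d_2=6$, and existence via $E_L$ using Lemma \ref{lem0}. No issues.
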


\begin{proof}
We cannot have $d_2 \leq 5$, since otherwise $\gamma_1 \leq 5 - 4 = 1$. A semistable bundle $E$ with $h^0(E) \geq 3$ has 
$d_E \geq d_2$ by Lemma \ref{lem2}. If $h^0(E) = 3$, then $\gamma(E) \geq \frac{1}{2}(d_2 -2)$. So $E$ can compute 
$\gamma_2 = 2$ only if $d_E = d_2 = 6$. 
For the existence of $E$ we take $E = E_L$ where $L$ is a line bundle of degree $d_2$ with $h^0(L) = 3$. 
\end{proof}

\begin{rem} \label{rem5.5}
The condition $d_2 = 6$ is satisfied when $g=5$ or $6$ and for all bielliptic curves.
\end{rem}

\begin{lem} \label{lem5.6}
Let $C$ be a tetragonal curve. If there exists a bundle $E$ computing $\gamma'_2$ with no line subbundle $M$ with 
$h^0(M) \geq 2$, then $g=5$ and $E$ fits into an exact sequence
\begin{equation} \label{e5.4}
0 \ra M \ra E \ra K \otimes M^* \ra 0
\end{equation}
with $d_M = 2$ and $h^0(M) =1$. Moreover, when $g = 5$, there exist non-trivial extensions \eqref{e5.4} 
for which all sections of $K \otimes M^*$ lift and all such $E$ are semistable.
\end{lem}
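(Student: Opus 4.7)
First I would pin down the numerical invariants. Writing $h^0(E) = 2+s$ with $s \geq 2$, the condition $\gamma(E) = \gamma_2' = \gamma_1 = 2$ forces $d_E = 2s+4$, while Lemma \ref{lempr} gives $d_E \geq d_{2s}$. Combining with \eqref{eq0}, either $4s+2 \leq 2s+4$ (impossible for $s \geq 2$) or $g+2s-1 \leq 2s+4$, giving $g \leq 5$. Since tetragonal curves satisfy $g \geq 5$, we obtain $g = 5$. For $g = 5$ and $s \geq 3$, a line bundle of degree $2s+4 > 2g-2$ would be non-special with $h^0 = 2s < 2s+1$, contradicting $d_{2s} \leq 2s+4$; hence $s = 2$, $h^0(E) = 4$ and $d_E = 8$. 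Lemma \ref{lempr} also gives $h^0(\det E) \geq 5$, and since $\deg(\det E) = 2g-2$, uniqueness of $K$ forces $\det E \simeq K$.

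Second, I would construct the subbundle. $E$ is generated by Lemma \ref{lem1}, so $\dim H^0(E(-p)) = 2$ for every $p$. Take independent $s_1, s_2 \in H^0(E(-p))$; under our hypothesis, $s_1 \wedge s_2 \in H^0(K(-2p))$ must be nonzero (otherwise the rank-$1$ image of $(s_1,s_2)$ would give a line subbundle of $E(-p)$ with $h^0 \geq 2$, which after twisting by $\cO(p)$ violates the assumption). Since $\deg K(-2p) = 6 > 0$, this section has a zero $q \in C$, and a suitable combination $s' = s_1 - c s_2$ vanishes at both $p$ and $q$, generating a line subbundle $M \subseteq E$ of degree $\geq 2$. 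Semistability forces $d_M \leq 4$; to exclude $d_M \in \{3,4\}$, note that $h^0(E) = 4 \leq h^0(M) + h^0(E/M) \leq 1 + h^0(E/M)$ would require $h^0(E/M) \geq 3$. But $E/M = K \otimes M^*$ has $h^1(E/M) = h^0(M) \leq 1$, so Riemann--Roch gives $h^0(E/M) \leq 2$ in both cases. Hence $d_M = 2$, and $h^0(M) = 1$.

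For the existence claim in $g = 5$, fix $M = \cO(p+q)$ of degree $2$ (automatically $h^0(M) = 1$ as $C$ is not hyperelliptic). Then $h^0(K \otimes M^*) = 3$ and $h^0(K^2 \otimes M^{*2}) = 8$ by Riemann--Roch. The multiplication map $\alpha$ of Lemma \ref{l2.3} factors through $S^2 H^0(K \otimes M^*)$, of dimension $6 < 8$, so $\alpha$ is not surjective and by Lemma \ref{l2.3} a positive-dimensional family of non-trivial extensions with the lifting property exists. For semistability, a destabilizing subbundle $M' \subset E$ has $\deg M' \in \{5,6\}$ (higher degrees would force $M' \hookrightarrow K \otimes M^*$, absurd). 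The degree-$6$ case splits the sequence and is ruled out by non-triviality. A degree-$5$ subbundle has the form $K \otimes M^*(-r)$, and its existence corresponds to the extension class lying in the $1$-dimensional kernel of the restriction $H^1(M^2\otimes K^*) \to H^1(M^2 \otimes K^*(r))$. A Serre-duality computation identifies this kernel with the annihilator of $H^0(K^2 \otimes M^{*2}(-r))$ and the lifting subspace with the annihilator of $\IM \alpha$; these intersect nontrivially iff every section of $K \otimes M^*$ vanishes at $r$. Since $K \otimes M^*$ is base-point-free (as $C$ admits no $g^1_3$, because $\gamma_1 = 2$), no such $r$ exists, so every non-trivial lifting extension yields a semistable $E$.

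The main obstacle is pinning down $d_M = 2$ in Step $2$: producing a subbundle of degree $\geq 2$ via the wedge trick is easy, but ruling out higher degrees uses the self-dual structure $\det E = K$ together with the tetragonal Clifford constraint. The semistability check in Step $3$ also requires a Serre-duality unwinding, but is routine once the base-point-freeness of $K \otimes M^*$ is noted.
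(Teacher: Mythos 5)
Your proof is correct and follows the same overall skeleton as the paper's (Paranjape--Ramanan to force $g=5$, $s=2$, $\det E\simeq K$; a degree-$2$ subbundle; the dimension count $6<8$ for existence; exclusion of destabilizing subbundles), but several of the sub-arguments are genuinely different. To get $g=5$ you combine $d_E\ge d_{2s}$ with \eqref{eq0}, whereas the paper deduces $d_4\le 8$ from $d_{2s}\ge d_4+2s-4$ and then observes that a line bundle of degree $d_4\le g+2$ with $h^0=5$ would contribute to $\gamma_1$ with $\gamma\le 0$; both are clean, and your route has the small advantage of not needing the case analysis on $d_4$ versus $g+2$. For the subbundle of degree $\ge 2$ you replace the paper's citation of Mukai--Sakai \cite{ms} by a direct wedge-of-sections argument, and you pin down $d_M=2$ by Riemann--Roch on $E/M\simeq K\otimes M^*$ rather than by the paper's one-line appeal to $h^0(E/M)\ge 3\Rightarrow d_{E/M}\ge d_2=6$; this makes the step self-contained at the cost of a little length. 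The largest divergence is in the semistability check: the paper simply notes that a degree-$5$ line subbundle $M'$ has $h^0(M')\le 2$ (since $d_2=6$) and $h^0(E/M')\le 1$ (since $d_1=4$), so $h^0(E)\le 3<4$, a contradiction; your Serre-duality computation identifying the locus of classes admitting a subbundle $K\otimes M^*(-r)$ with the annihilator of $H^0(K^2\otimes M^{*2}(-r))$, and intersecting it with the annihilator of $\IM(\alpha)$ via base-point-freeness of $K\otimes M^*$, is valid but does considerably more work than is needed here (it would, however, also rule out degree-$5$ subbundles of extensions with $h^0(E)<4$, which the section count does not). One cosmetic point: $s=2$ follows immediately from the contributing condition $\mu(E)\le g-1$, i.e.\ $d_E=2s+4\le 8$, without the non-speciality detour you take.
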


\begin{proof}
Write $h^0(E) = 2 + s$ with $s \geq 2$. By Lemma \ref{lempr}, $d_E\geq d_{2s} \geq d_4 + 2s -4$. So
$$ 
\gamma(E) \geq \frac{1}{2}(d_4 + 2s -4 -2s) = \frac{d_4}{2} -2.
$$
This implies $d_4 \leq 8$. A line bundle $L$ of degree $d_4$ with $h^0(L) = 5$ has $h^1(L) \geq 2$ if $d_4 \leq g + 2$
and therefore contributes to the Clifford index. This gives a contradiction for $g \geq 6$.

For $g = 5$, we must have $d_4 = 8, \;d_E = 8$ and $s=2$.
Moreover, by Lemma \ref{lempr}, we have $h^0(\det E) \geq 5$. So $\det E \simeq K$.
 By \cite{ms} $E$ has a line subbundle $M$ of degree $\geq 2$ 
with $h^0(M) \leq 1$. So $h^0(E/M) \geq 3$, which implies $d_{E/M} \geq d_2 = 6$. 
Since $E/M \simeq K \otimes M^*$, we obtain \eqref{e5.4} as required.

According to Lemma \ref{l2.3} there exist non-trivial extensions \eqref{e5.4} for which all sections of $K \otimes M^*$ 
lift if and only if 
$$
H^0(K \otimes M^*) \otimes H^0(K \otimes M^*) \ra H^0(K^2 \otimes M^{*2})
$$
is not surjective. The map factors through $S^2H^0(K \otimes M^*)$ which has dimension 6. On the other hand, 
$h^0(K^2 \otimes M^{*2}) = 8$. 

If $E$ is not semistable, then $E$ possesses a line subbundle $M'$ of degree 5. 
Now $h^0(M') \leq 2$ and $h^0(E/M') \leq 1$. So $h^0(E) \leq 3$, a contradiction.
\end{proof}

We now consider tetragonal curves of genus 5 starting with curves which satisfy the Petri condition.

\begin{prop} \label{prop5.7}
Let $C$ be a Petri curve of genus $5$. Then the bundles computing $\gamma_2$ are
\begin{enumerate} 
\item $E_L$ with $d_L = d_2 = 6$;
\item $Q \oplus Q'$, where $Q$ and $Q'$ are tetragonal line bundles; 
\item for each tetragonal line bundle $Q$ a unique bundle $E$ which is a 
non-trivial extension 
\begin{equation} \label{eqn5.5}
0 \ra Q \ra E \ra Q \ra 0
\end{equation}with $h^0(E)=4$:
\item  for each tetragonal line bundle $Q$ a unique bundle $E$ which is a 
non-trivial extension 
$$
0 \ra Q \ra E \ra K \otimes Q^* \ra 0
$$with $h^0(E)=4$;
\item possibly stable bundles fitting into a non-trivial extension \eqref{e5.4}.
\end{enumerate}
All these bundles compute $\gamma'_2$ except for those of type $(1)$. 
\end{prop}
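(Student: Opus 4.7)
The plan is to separate the analysis into bundles computing $\gamma_2$ with $h^0(E) = 3$ (which necessarily fail to compute $\gamma'_2$) and those with $h^0(E) = 4$ (which compute both). On a Petri curve of genus $5$, formula~\eqref{eqdr} gives $d_1 = 4$, $d_2 = 6$, $d_4 = 8$, and \eqref{eq4.1} yields $\gamma_1 = \gamma_2 = \gamma'_2 = 2$. A direct Riemann--Roch argument shows that a line bundle contributing to $\gamma_1$ and achieving $\gamma(L) = 2$ must satisfy $h^0(L) = 2$, $d_L = 4$, i.e.\ be tetragonal, and that Serre duality $Q \mapsto K \otimes Q^*$ preserves this family.

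The bundles computing $\gamma_2$ but not $\gamma'_2$ are handled by Corollary~\ref{cor2} (which applies since $d_2 = 6 = 2\gamma_1 + 2$): these are precisely the $E_L$ of case~(1), stable with $h^0(E_L) = 3$ by Lemma~\ref{lem0}. For bundles $E$ computing $\gamma'_2$ one has $d_E = 8$ and $h^0(E) = 4$. If $E$ has no line subbundle with $h^0 \geq 2$, then Lemma~\ref{lem5.6} places $E$ in case~(5). Otherwise, any subbundle $M$ with $h^0(M) \geq 2$ gives, via Lemma~\ref{lempr2}, an extension $0 \to Q_1 \to E \to Q_2 \to 0$ with both $Q_i$ tetragonal and all sections of $Q_2$ lifting to $E$.

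I then apply Lemma~\ref{l2.3} together with Lemma~\ref{mult} (with $F = K \otimes Q_1^*$, $G = Q_2$) to control the multiplication map $\alpha : H^0(Q_2) \otimes H^0(K \otimes Q_1^*) \to H^0(K \otimes Q_1^* \otimes Q_2)$. If $Q_2 \not\simeq Q_1$ and $Q_2 \not\simeq K \otimes Q_1^*$, then $F \not\simeq G$ and a Riemann--Roch computation shows $\alpha$ is an isomorphism, so only $E \simeq Q_1 \oplus Q_2$ occurs, giving case~(2). If $Q_2 \simeq Q_1$, then Lemma~\ref{lem2.10} (odd genus) forces $h^0(Q_1^2) = 4$ and hence $K \not\simeq Q_1^2$, so again $F \not\simeq G$, and Lemma~\ref{mult} yields $\codim \IM \alpha = h^0(K) - 4 = 1$, producing the unique non-trivial $E$ of case~(3). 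If $Q_2 \simeq K \otimes Q_1^*$, then $F \simeq G$ and the second branch of Lemma~\ref{mult} combined with Lemma~\ref{lem2.10} gives $\codim \IM \alpha = h^0(Q_2^2) - 3 = 1$, producing case~(4). The final assertion is immediate: the bundles in~(2)--(5) have $\gamma = 2 = \gamma'_2$ and $h^0 \geq 4$, while $E_L$ in~(1) has $h^0(E_L) = 3 < 4$ and cannot contribute to $\gamma'_2$.

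The main obstacle I anticipate is the bookkeeping around the uniqueness assertion in~(3) and~(4): the codimension-one computation forces the extension class to be unique up to scalar, and hence the bundle $E$ to be unique up to isomorphism, but this requires a careful argument, and one must confirm that the three sub-cases $Q_2 \simeq Q_1$, $Q_2 \simeq K \otimes Q_1^*$ and $Q_2 \not\simeq Q_1, K \otimes Q_1^*$ truly partition the possibilities (using Lemma~\ref{lem2.10} to rule out $Q_1 \simeq K \otimes Q_1^*$, which would collapse two sub-cases). A secondary concern is confirming semistability of the resulting bundles, which follows since any destabilising subbundle would have degree $> 4 = \mu(E)$ and hence $h^0 \leq 1$.
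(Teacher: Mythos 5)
Your proof is correct and follows essentially the same route as the paper: Corollary \ref{cor2} for the $h^0=3$ bundles, Lemma \ref{lem5.6} for bundles without a pencil subbundle, and Lemmas \ref{lempr2}, \ref{l2.3}, \ref{mult} and \ref{lem2.10} to reduce the remaining bundles to extensions of tetragonal line bundles and to compute $\codim\,\IM(\alpha)$ in the three sub-cases $Q_2\simeq Q_1$, $Q_2\simeq K\otimes Q_1^*$ and neither, exactly as the authors do. The only blemish is your closing semistability remark: a line bundle of degree $5>\mu(E)$ on such a curve need not have $h^0\le 1$ (consider $Q(p)$), so that justification fails as stated; the correct and immediate argument is that any line subbundle of an extension of one degree-$4$ line bundle by another maps non-trivially either to the sub or to the quotient and hence has degree at most $4=\mu(E)$.
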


\begin{proof}
The bundles $E_L$ and $Q \oplus Q'$ have $\gamma(E) = \gamma_2$ by direct numerical calculations.

If $E$ computes $\gamma_2$ and possesses a line subbundle $M$ with $h^0(M) \geq 2$, then by Lemma \ref{lempr2} there is a non-trivial extension $0 \ra M \ra E \ra N \ra 0$ with tetragonal line bundles $M = Q$ and $N = Q'$. By Lemma \ref{l2.3}, there exist such extensions for which all sections of $Q'$ lift if and only if the map
\begin{equation} \label{e5.5}
H^0(Q') \otimes H^0(K \otimes Q^*) \ra H^0(K \otimes Q^* \otimes Q')
\end{equation}
is not surjective. By Lemmas \ref{mult} and \ref{lem2.10} this happens if and only if either $Q' \simeq Q$ 
or $Q' \simeq K \otimes Q^*$.

By Lemma \ref{lem2.10}, the Petri condition implies that $Q \not \simeq K \otimes Q^*$. This gives the cases $(3)$ and $(4)$.
The uniqueness statement follows from Lemma \ref{mult}. 

If $E$ does not possess a line subbundle $M$ with $h^0(M) \geq 2$, we obtain an extension \eqref{e5.4} by Lemma \ref{lem5.6}.
Any strictly semistable bundles which are included among these are already contained in $(3)$ or $(4)$.
\end{proof}

\begin{rem}
It is not clear whether the bundles in $(5)$ actually exist. 
\end{rem}

\begin{rem}
If $C$ is not Petri and $Q^2 \simeq K$, then there 
exists a family of bundles $E$ parametized by $\PP^1$ which occur as non-trivial extensions \eqref{eqn5.5}.
Otherwise the results are the same as in Proposition \ref{prop5.7}. 
\end{rem}

\begin{prop} \label{prop5.8}
For a tetragonal curve of genus $g=6$ the bundles computing $\gamma_2$ are 
\begin{enumerate}
\item $E_L$ with $d_L = d_2 = 6$;
\item $Q \oplus Q'$, where $Q$ and $Q'$ are tetragonal line bundles;
\item if $C$ is not a Petri curve, bundles $E$ of degree $8$ with $h^0(E)=4$ given by non-trivial extensions
$$
0 \ra Q \ra E \ra Q \ra 0,
$$
where $Q$ is a tetragonal line bundle and $h^0(Q^2)=4$;
\item stable bundles $E$ of degree $10$ with $h^0(E) = 5$ given by non-trivial extensions 
$$
0 \ra Q \ra E \ra K\otimes Q'^* \ra 0
$$
where $Q$ and $Q'$ are tetragonal line bundles (such bundles exist when $Q\simeq Q'$).
\end{enumerate}
All these bundles compute $\gamma'_2$ except for those of type $(1)$. 
\end{prop}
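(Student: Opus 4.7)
My plan runs parallel to the proofs of Propositions \ref{pr3.5} and \ref{prop5.7}. Since $C$ is tetragonal we have $\gamma_2 = \gamma'_2 = \gamma_1 = 2$. Corollary \ref{cor2} applies whenever $d_2 = 6$ and identifies the bundles computing $\gamma_2$ but not $\gamma'_2$ as exactly the $E_L$ with $d_L = 6$, giving type (1). For the rest, let $E$ compute $\gamma'_2$ and write $h^0(E) = 2+s$ with $s \geq 2$, so $d_E = 2s+4$. Lemma \ref{lem5.6} rules out any $E$ with no line subbundle of $h^0 \geq 2$ (this would force $g=5$), so such a subbundle $M$ exists, and Lemma \ref{lempr2} then yields an extension $0 \to M \to E \to N \to 0$ with $\gamma(M) = \gamma(N) = 2$ and all sections of $N$ lifting.

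Next I would enumerate degrees. Semistability forces $d_M \leq d_E/2$, and $\mu(E) \leq g-1 = 5$ gives $d_E \leq 10$, leaving $d_E \in \{8,10\}$. For $d_E = 8$ this forces $d_M = d_N = 4$, so $M \simeq Q$ and $N \simeq Q'$ are tetragonal. For $d_E = 10$, Clifford's bound rules out $d_M = 5$ (such an $M$ would have $\gamma(M) \geq 3$), leaving $d_M = 4$ with $M \simeq Q$ tetragonal and $d_N = 6$ with $h^0(N) = 3$; Serre duality on $K \otimes N^*$ (degree $4$, $h^0 = 2$) then identifies $K \otimes N^* \simeq Q'$ tetragonal, so $N \simeq K \otimes Q'^*$.

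For $d_E = 8$ I would apply Lemma \ref{mult} with $F = Q$, $G = Q'$. When $Q \not\simeq Q'$, the non-trivial lifting condition is $h^0(Q \otimes Q') \geq 5$; by Riemann--Roch this becomes $h^0(K \otimes Q^* \otimes Q'^*) \geq 2$ on a line bundle of degree $2$, forcing $C$ hyperelliptic --- impossible. So only the split extension survives, giving $Q \oplus Q'$ of type (2). When $Q \simeq Q'$ the condition reads $h^0(Q^2) \geq 4$; Lemma \ref{lem2.10} rules this out on Petri curves (where $h^0(Q^2) = 3$), while the same hyperellipticity argument applied to $K \otimes Q^{*2}$ (of degree $2$) gives $h^0(Q^2) \leq 4$. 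So on a non-Petri curve with $h^0(Q^2) = 4$ we obtain the non-trivial self-extension of type (3), unique up to isomorphism by the codimension count in Lemma \ref{mult}.

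For $d_E = 10$, Lemma \ref{l2.3} produces the required non-trivial extensions as soon as $\alpha\colon H^0(K \otimes Q'^*) \otimes H^0(K \otimes Q^*) \to H^0(K^2 \otimes Q^* \otimes Q'^*)$ fails to be surjective. When $Q \simeq Q'$ the map factors through $S^2 H^0(K \otimes Q^*)$, of dimension $\binom{4}{2} = 6$, while $h^0(K^2 \otimes Q^{*2}) = 7$ by Riemann--Roch (its Serre dual has negative degree), so surjectivity fails and non-trivial extensions exist. Stability of $E$ follows because semistability gives $d_L \leq 5$ for any line subbundle $L$, and $d_L = 5$ would force both $L$ and $K \otimes L^*$ (each of degree $5 \leq g-1$) to have $h^0 \leq 2$ by $\gamma_1 = 2$, yielding $h^0(E) \leq 4$ --- contradiction. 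The main technical obstacle I foresee is the extension-lifting bookkeeping, especially pinning down $h^0(Q^2) = 4$ in (3) and ruling out strict semistability in (4); but each such step reduces to Riemann--Roch, Clifford's theorem, or the hyperelliptic dichotomy.
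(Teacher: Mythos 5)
Your proposal is correct and follows essentially the same route as the paper: reduce to extensions via Lemmas \ref{lem5.6} and \ref{lempr2}, enumerate the cases $d_E=8,10$, and settle the lifting questions with Lemmas \ref{mult} and \ref{l2.3} together with the dimension count $\dim S^2H^0(K\otimes Q^*)=6<7=h^0(K^2\otimes Q^{*2})$. The only differences are cosmetic: you replace the paper's observation that $d_4=9$ by an equivalent Riemann--Roch/hyperelliptic-dichotomy argument bounding $h^0(Q\otimes Q')$, and you supply explicitly the stability argument for type (4) that the paper merely asserts.
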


\begin{proof}
The bundles $E_L$ and $Q \oplus Q'$ certainly 
have $\gamma(E) = \gamma_2=2$.

In view of Lemmas \ref{lem5.6} and \ref{lempr2}, all other bundles $E$ computing $\gamma_2$ arise as extensions
$$
0 \ra Q \ra E \ra N \ra 0
$$
where $Q$ is a tetragonal line bundle and $N$ is either a tetragonal line bundle $Q'$ or the Serre dual $K\otimes Q'^*$ of a tetragonal line bundle $Q'$. 
In both cases all sections of $N$
must lift to $E$. 

It is easy to check that $d_4=9$. Hence, when $N\simeq Q'$, it follows from Lemma \ref{mult} that, if the extension is non-trivial, then 
$E$ can exist only if $Q' \simeq Q $ and $h^0(Q^2)=4$. This 
cannot happen on a Petri curve by Lemma \ref{lem2.10}. This gives $(3)$.

The only remaining case is $(4)$. Here the extension must be non-trivial, since $E$ is semistable. Conversely, all non-trivial extensions of this type yield semistable bundles $E$ and all those with $h^0(E)=5$ are in fact stable. Existence when $Q\simeq Q'$ follows from Lemma \ref{l2.3} and the fact that $S^2H^0(K\otimes Q^*)$ has dimension $6$, while $h^0(K^2\otimes Q^{*2})=7$.
\end{proof}

\begin{prop} \label{prop5.9}
Let $C$ be a tetragonal curve of genus $g = 7$ such that $d_2 = 7$. Then the bundles computing $\gamma_2$ are
\begin{enumerate}
\item $Q \oplus Q'$, where $Q$ and $Q'$ are tetragonal line bundles;
\item possibly non-trivial extensions
\begin{equation*} \label{eq5.5}
0 \ra Q \ra E \ra Q \ra 0,
\end{equation*}
where $Q$ is a tetragonal line bundle with $h^0(E) =4$ (such extensions exist if and only if $h^0(Q^2) = 4$);
\item possibly non-trivial extensions
\begin{equation*} \label{eq5.6}
0 \ra Q \ra E \ra K \otimes Q'^* \ra 0
\end{equation*}
with $h^0(E) = h^0(Q) + h^0(K \otimes Q'^*) = 6$. 
\end{enumerate}
In particular, every bundle computing $\gamma_2$ also computes $\gamma'_2$.
\end{prop}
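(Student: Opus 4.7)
The plan is first to verify each listed type computes $\gamma_2$, then to show any bundle computing $\gamma_2$ is of one of these forms. Each bundle in (1)--(3) is semistable of rank $2$ and satisfies $d_E/2 - h^0(E) + 2 = 2$; for (3) the split extension $Q \oplus (K \otimes Q'^*)$ is destabilised by the slope-$8$ summand $K \otimes Q'^*$, so the extension must be non-trivial and $h^0(E)=6$ forces all four sections of $K \otimes Q'^*$ to lift. For the last sentence of the proposition, $\gamma(E_L) = (d_2 - 2)/2 = 5/2 \neq 2$ since $d_2 = 7$, so Corollary \ref{cor2} implies no bundle computes $\gamma_2$ but not $\gamma'_2$; in particular any $E$ computing $\gamma_2$ has $h^0(E) \geq 4$.

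Conversely, let $E$ compute $\gamma_2$. I would first apply Lemma \ref{lem5.6} to rule out the case of no line subbundle with $h^0 \geq 2$ (which would force $g=5$); hence $E$ fits in $0 \to M \to E \to N \to 0$ with $h^0(M) \geq 2$, and Lemma \ref{lempr2} gives $\gamma(M) = \gamma(N) = \gamma_1 = 2$ with all sections of $N$ lifting. On this curve the line bundles $L$ with $\gamma(L) = 2$ and $h^0(L) \geq 2$ are exactly the tetragonals $Q$ and their Serre duals $K \otimes Q^*$: the hypothesis $d_2 = 7$ rules out $h^0(L) \geq 3$ with $d_L \leq g-1 = 6$, forcing $h^0(L)=2$ and $d_L=4$. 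Semistability of $E$ together with $\mu(E) \leq 6$ then forces $M = Q$ tetragonal, and either $N = Q'$ is tetragonal (so $d_E=8$, $h^0(E)=4$) or $N = K \otimes Q'^*$ (so $d_E=12$, $h^0(E)=6$).

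For $d_E = 8$ I would apply Lemma \ref{mult} with $F = Q$, $G = Q'$ (both of slope $4$). When $Q \not\simeq Q'$, a non-split extension in which all sections of $Q'$ lift needs $\codim \IM(\alpha) = h^0(Q \otimes Q') - 4 > 0$; but Riemann--Roch gives $h^0(Q \otimes Q') = 2 + h^0(K \otimes Q^* \otimes Q'^*)$, and the latter is at most $2$ since $K \otimes Q^* \otimes Q'^*$ is a line bundle of degree $4$ on a non-hyperelliptic curve. Hence only the split case survives, yielding (1). When $Q \simeq Q'$, Lemma \ref{mult} gives $\codim \IM(\alpha) = h^0(Q^2) - 3$, so a non-trivial extension with the required lifting exists iff $h^0(Q^2) \geq 4$, i.e. $h^0(Q^2) = 4$ by the Clifford bound; this is case (2). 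For $d_E = 12$ the split extension $Q\oplus(K\otimes Q'^*)$ fails semistability, so $E$ must be a non-trivial extension with $h^0(E) = 6$, giving (3). The principal obstacle is the bound $h^0(Q \otimes Q') \leq 4$ used to eliminate non-split extensions with distinct tetragonals; this relies crucially on $g = 7$ so that $K \otimes Q^* \otimes Q'^*$ has degree exactly $4$, together with the non-hyperelliptic hypothesis.
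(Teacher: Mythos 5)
Your argument is correct and follows essentially the same route as the paper: forcing $h^0(E)\ge 4$, invoking Lemma \ref{lem5.6} to produce a line subbundle with $h^0\ge 2$, using Lemma \ref{lempr2} and $d_2=7$ to identify $M$ and $N$ among the tetragonal bundles and their Serre duals, and then applying Lemma \ref{mult} together with the bound $h^0(Q\otimes Q')\le 4$ (which the paper gets from $d_3=8$ via Serre duality and you get equivalently from Riemann--Roch plus Clifford on the degree-$4$ Serre dual). One small correction: Corollary \ref{cor2} is not applicable here because its hypothesis $d_2\le 2\gamma_1+2=6$ fails; the statement you need -- that every bundle computing $\gamma_2$ has $h^0\ge 4$ -- should instead be drawn from Lemma \ref{lem2} (or Lemma \ref{lem5.4}), and your computation $\gamma(E_L)=\frac{5}{2}>2$ already supplies the required contradiction.
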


\begin{proof} 
By Lemma \ref{lem5.4} every bundle $E$ computing 
$\gamma_2$ has $h^0(E) \geq 4$. Hence by Lemma \ref{lem5.6}, $E$ possesses a line subbundle $M$ with $h^0(M) \geq 2$.
Writing $N = E/M$, it follows by Lemma \ref{lempr2} that $\gamma(M) = \gamma(N) = \gamma_1 = 2$
and all sections of $N$ lift to $E$.
Since $d_M \leq \mu(E) \leq g-1$, $M$ must be a tetragonal line bundle $Q$.
If $d_N \leq g-1$, the same holds for $N$; if $d_N > g-1$, then $N$ must be the Serre dual $K \otimes Q'^*$
of a tetragonal line bundle $Q'$.

Since $d_3 = 8$ by Serre duality, we have $h^0(Q \otimes Q') \le 4$. Lemma \ref{mult} now implies (2).
\end{proof}

\begin{theorem} \label{thm5.11}
Let $C$ be a tetragonal curve of genus $g \geq 8$ such that the tetragonal line bundles are the only 
line bundles of degree $\leq g-1$ which compute $\gamma_1$. Then the tetragonal line bundle $Q$ is unique and 
the only bundles computing $\gamma_2$ are
\begin{enumerate}
\item $Q \oplus Q$;
\item possibly non-trivial extensions
\begin{equation*} 
0 \ra Q \ra E \ra K \otimes Q^* \ra 0
\end{equation*}
with $h^0(E) = h^0(Q) + h^0(K \otimes Q^*) = g-1$. 
\end{enumerate}
In particular, every bundle computing $\gamma_2$ also computes $\gamma'_2$. 
\end{theorem}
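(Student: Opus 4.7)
The plan is to combine the structural lemmas of Section~\ref{prelim} (especially Lemmas~\ref{lempr2}, \ref{lem5.4}, \ref{lem5.6}, and~\ref{l2.3}) with explicit control over two multiplication maps attached to $Q$. I will split the argument into four parts: (i)~ruling out bundles with $h^0(E)=3$, (ii)~proving uniqueness of $Q$, (iii)~reducing an arbitrary $E$ to a tetragonal extension, and (iv)~showing that the extension $0\to Q\to E\to Q\to 0$ must split.

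For step~(i), the hypothesis excludes any line bundle of degree $6$ with $h^0\ge 3$ (such would compute $\gamma_1=2$ without being tetragonal), so $d_2\ge 7$ and Lemma~\ref{lem5.4} gives $h^0(E)\ge 4$ for every $E$ computing $\gamma_2$; hence every such $E$ also computes $\gamma_2'$. For step~(ii), suppose $Q\ne Q'$ were two tetragonal line bundles. The base-point-free pencil trick applied to $Q$ tensored with $Q'$, together with $H^0(Q^{-1}\otimes Q')=0$, gives $h^0(Q\otimes Q')\ge 4$. For $g\ge 9$, the degree-$8$ bundle $Q\otimes Q'$ then computes $\gamma_1$ without being tetragonal; for $g=8$, Riemann--Roch forces the degree-$6$ Serre dual $K\otimes Q^*\otimes Q'^*$ to have $h^0\ge 3$, again violating the hypothesis.

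For step~(iii), let $E$ compute $\gamma_2$. Since $g\ge 8$, Lemma~\ref{lem5.6} yields a line subbundle $M\hookrightarrow E$ with $h^0(M)\ge 2$, and Lemma~\ref{lempr2} then gives $\gamma(M)=\gamma(N)=2$ (where $N=E/M$) and all sections of $N$ lift to $E$. Semistability and the uniqueness of $Q$ force $M\simeq Q$. If $d_N\le g-1$ then $N$ itself computes $\gamma_1$, so $N\simeq Q$; if $d_N>g-1$ then $\gamma(N)=2$ combined with the bound $d_E\le 2g-2$ (from $\mu(E)\le g-1$) forces $d_N=2g-6$, so $K\otimes N^*$ is tetragonal and $N\simeq K\otimes Q^*$.

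The main obstacle is step~(iv): ruling out non-trivial extensions $0\to Q\to E\to Q\to 0$ with $h^0(E)=4$. By Lemma~\ref{l2.3}, such an extension exists precisely when
$$
\alpha : H^0(Q)\otimes H^0(K\otimes Q^*) \longrightarrow H^0(K)
$$
fails to be surjective. Since $Q$ is basepoint-free with $h^0(Q)=2$, tensoring the pencil sequence $0\to Q^{-1}\to H^0(Q)\otimes\mathcal{O}_C\to Q\to 0$ by $K\otimes Q^*$ identifies $\ker\alpha\cong H^0(K\otimes Q^{-2})$, so surjectivity of $\alpha$ is equivalent (by Riemann--Roch on $K\otimes Q^{-2}$) to $h^0(Q^2)=3$. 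For a basis $s,t$ of $H^0(Q)$, the sections $s^2, st, t^2$ are linearly independent in $H^0(Q^2)$, since any vanishing relation would factor into two nonzero sections of $Q$ whose degree-$4$ zero divisors would have to cover the infinite curve $C$. Hence $h^0(Q^2)\ge 3$. For $g\ge 9$, $h^0(Q^2)\ge 4$ would make the degree-$8$ bundle $Q^2$ a non-tetragonal computer of $\gamma_1$; for $g=8$, Riemann--Roch gives $h^0(Q^2)=1+h^0(K\otimes Q^{*2})$ and the same hypothesis applied to the degree-$6$ bundle $K\otimes Q^{*2}$ forces its $h^0\le 2$. In either case $h^0(Q^2)=3$, $\alpha$ is surjective, and only the split extension survives, giving $E\simeq Q\oplus Q$. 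The remaining case $N\simeq K\otimes Q^*$ is precisely item~(2), with $h^0(E)=g-1$ forced by $\gamma(E)=2$.
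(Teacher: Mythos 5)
Your proof is correct and follows essentially the same route as the paper: uniqueness of $Q$ via the base-point-free pencil trick combined with the hypothesis (the paper phrases this as $d_3\ge 9$), reduction to tetragonal extensions via Lemmas \ref{lem5.4}, \ref{lem5.6} and \ref{lempr2}, and elimination of the non-split extension of $Q$ by $Q$ by showing $h^0(Q^2)=3$. Your direct pencil-trick computation of $\operatorname{codim}\operatorname{im}(\alpha)$ is exactly the content of the paper's Lemma \ref{mult}, so the two arguments coincide in substance.
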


\begin{proof}
If $g \geq 9$, then $d_3 \geq 9$ by hypothesis. If $g = 8$, the same holds by Serre duality. Hence, if $Q$ and $Q'$
are tetragonal line bundles, we have $h^0(Q  \otimes Q') = 3$. So by the base-point-free pencil trick, 
$Q \simeq Q'$. 
Moreover, by Lemma \ref{mult} there does not exist a non-trivial extension $0 \ra Q \ra E \ra Q \ra 0$
for which all sections of $Q$ lift to $E$.

Since $d_2\ge7$, the bundles $E_L$ with $d_L=d_2$ and $h^0(L)=3$ do not compute $\gamma_2$.
The proof now proceeds in the same way as the proof of the previous proposition.
\end{proof}

\begin{rem} \label{rem5.12}
The hypothesis is satisfied for a general tetragonal curve (see \cite[Remark 4.5(c)]{cl}). 
\end{rem}

\begin{ex}
The normalisation of a plane curve of degree $6$ with 2 nodes is a tetragonal curve of genus 8 with $d_2 = 6$ and $d_3 = 8$.
It possesses 2 tetragonal line bundles $Q$ and $Q'$; the hyperplane bundle $H$ also computes $\gamma_1$. The 2 tetragonal 
line bundles $Q$ and $Q'$ are non-isomorphic and $h^0(Q^2) = h^0(Q'^2) = 3$.
The bundle $E_H$ computes $\gamma_2$ but not $\gamma'_2$.
Moreover, the only bundles computing $\gamma'_2$ are
$$
Q \oplus Q, \quad Q \oplus Q', \quad Q' \oplus Q', \quad H \oplus H
$$
and unique non-trivial extensions
$$
0 \ra Q \ra E \ra K \otimes Q \ra 0 \quad \mbox{and} \quad 0 \ra Q' \ra E \ra K \otimes Q' \ra 0.
$$ 
To see this, we have to show according to Lemma \ref{l2.3} that the map
$H^0(K \otimes Q'^*) \otimes H^0(K \otimes Q^*) \ra H^0(K^2 \otimes Q^* \otimes Q'^*)$
is surjective and that the map $H^0(K \otimes Q^*) \otimes H^0(K \otimes Q^*) \ra H^0(K^2 \otimes {Q^*}^2)$ 
and the analogous map for $Q$ replaced by $Q'$ have 1-dimensional cokernel. 

The linear series $|K \otimes Q^*|$ is cut by conics through the node corresponding to $Q'$ with a similar statement for  
$|K \otimes Q'^*|$. Moreover, $|K^2 \otimes Q^* \otimes Q'^*|$ is cut by quartics through both nodes. It is easy to see that 
this gives the surjectivity. On the other hand, the linear series $|K^2 \otimes {Q^*}^2|$ has dimension 12, while the 
linear system of quartics with a double point at the corresponding node is only 11-dimensional. 

In the same way one checks that there are no non-trivial extensions $0 \ra H \ra E \ra H \ra 0$ or $0\to H\to E\to K\otimes H^*\to 0$ such that $E$ 
computes $\gamma'_2$.
\end{ex}

\begin{ex}
A smooth $(4,4)$-curve on a smooth quadric surface 
is a tetragonal curve of genus 9 with $d_2 = 7, \; d_3 = 8$ and possessing 2 tetragonal line bundles $Q$ and $Q'$. 
These bundles are non-isomorphic and $h^0(Q^2) = h^0(Q'^2) = 3$. The hyperplane bundle $H$ also computes $\gamma_1$.
The bundles computing $\gamma'_2$ are exactly as 
in the previous example.

This follows from the fact that $|K \otimes Q^*|$ and $|K \otimes Q'^*|$ are 
cut respectively by $(1,2)$- and $(2,1)$-curves 
on the quadric while $|K^2 \otimes Q^* \otimes Q'^*|$ is cut by $(3,3)$-curves. On the other hand,
the codimension of the linear system of $(4,2)$-curves in $|K^2 \otimes {Q^*}^2|$ is again 1. Similarly $|H|$ is cut by $(1,1)$-curves and $|H^2|$ by $(2,2)$-curves.
\end{ex}

\begin{rem} \label{rem5.15}
Under the hypotheses of Theorem \ref{thm5.11}, extensions of type (2) exist if and only if the multiplication map
$$
H^0(K \otimes Q^*) \otimes H^0(K \otimes Q^*) \ra H^0(K^2 \otimes {Q^*}^2)
$$
is not surjective. For a general tetragonal curve of genus $g \geq 27$ there are no such bundles.
(The condition $g \geq 27$ 
is probably not best possible.)
\end{rem}

\begin{proof}
The statement concerning existence is Lemma \ref{l2.3}.

The surjectivity of the multiplication map follows from \cite[Theorem 2(a)]{gl}. For this we have to show that 
$K \otimes Q^*$ is very ample. The proof is the same as in the proof of Remark \ref{rem5.2}. 
\end{proof}

Finally in this section, we look at tetragonal curves which are in some sense at the opposite extreme from the general ones, namely bielliptic curves. For this case we list some bundles computing $\gamma_2$ and $\gamma_2'$, but we do not know whether the list is complete.

\begin{prop} \label{prop5.11}
Let $\pi: C \ra C'$ be a double covering of an elliptic curve for which $C$ has genus $g$ and $\gamma_1 = 2$. Then, 
if $E'$ is a semistable rank-$2$ bundle on $C'$ of degree $d', \; 3 \leq d' \leq g-1$, the pull-back $\pi^*E'$ 
computes $\gamma_2$. For $d' \geq 4$ it also computes $\gamma'_2$. 
\end{prop}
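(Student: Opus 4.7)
The plan is to compute $h^0(\pi^*E')$ and $\gamma(\pi^*E')$ directly, to check that $\pi^*E'$ is semistable, and then to invoke the equalities $\gamma_2=\gamma_2'=\gamma_1=2$ stated at the beginning of Section~\ref{htt} (the bielliptic structure forces $d_1\leq 4$ and the hypothesis $\gamma_1=2$ rules out smaller gonalities, so $C$ is tetragonal of Clifford dimension $1$).

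First I would use Riemann--Hurwitz, together with $g(C')=1$, to see that the branch divisor $B$ of $\pi$ has degree $2g-2$. Writing $\pi_*\mathcal{O}_C\simeq\mathcal{O}_{C'}\oplus\eta^{-1}$ with $\eta^2\simeq\mathcal{O}_{C'}(B)$ gives $\deg\eta=g-1$, and the projection formula yields
\[
\pi_*\pi^*E'\simeq E'\oplus(E'\otimes\eta^{-1}).
\]
By Atiyah's classification, a semistable bundle of positive degree $d$ on an elliptic curve has $h^0=d$, so $h^0(E')=d'$. The twist $E'\otimes\eta^{-1}$ is semistable of degree $d'-2(g-1)\leq -(g-1)<0$ (using $d'\leq g-1$ and $g\geq 4$), hence has no sections. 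Therefore $h^0(\pi^*E')=h^0(\pi_*\pi^*E')=d'$.

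Next I would verify that $\pi^*E'$ is semistable. Let $\sigma$ denote the bielliptic involution. A hypothetical line subbundle $M\subset\pi^*E'$ with $\deg M>\mu(\pi^*E')=d'$ would be the unique maximal destabilizing subbundle, hence stable under $\sigma^*$. The canonical $\sigma$-linearization on $\pi^*E'$ acts trivially on the fibres over the ramification points of $\pi$, and this triviality passes to the restricted linearization on $M$; by Galois descent $M\simeq\pi^*M_0$ for a line subbundle $M_0\subset E'$ with $\deg M_0=\tfrac{1}{2}\deg M>d'/2=\mu(E')$, contradicting the semistability of $E'$.

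Combining these, $\pi^*E'$ is semistable of slope $d'\leq g-1$ with $h^0(\pi^*E')=d'$, so
\[
\gamma(\pi^*E')=d'-2\cdot\frac{d'}{2}+2=2=\gamma_1=\gamma_2=\gamma_2'.
\]
Thus $\pi^*E'$ contributes to $\gamma_2$ as soon as $h^0(\pi^*E')=d'\geq 3$, and to $\gamma_2'$ once $d'\geq 4$; in each range it computes the corresponding invariant. The main obstacle is the semistability step; everything else is a routine degree and cohomology count, while the Galois-descent argument requires noting that the canonical $\sigma$-action on $\pi^*E'$ is trivial on fibres over ramification points so that a destabilizing subbundle must descend and contradict semistability of $E'$.
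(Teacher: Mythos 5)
Your argument is correct and follows essentially the same route as the paper: pull back, control $h^0(\pi^*E')$, note semistability, and use $\gamma_2=\gamma_2'=2$ to force $\gamma(\pi^*E')=2$. The paper is more economical — it only needs the inequality $h^0(\pi^*E')\ge h^0(E')\ge d'$ (so that $\gamma(\pi^*E')\le 2$, with equality forced by minimality of $\gamma_2$) and quotes semistability of pullbacks as standard — whereas you supply the exact value $h^0(\pi^*E')=d'$ via the projection formula and an explicit descent proof of semistability; these are valid added details rather than a different method.
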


\begin{proof}
The pull-back $\pi^*E'$ is semistable of degree $2d'$ and $h^0(\pi^*E') \geq d'$. So 
$$
\gamma(\pi^*E') \leq \frac{1}{2}(2d' -2(d'-2)) = 2.
$$
Since $\gamma_2 = 2$, we must have equality. For $\gamma'_2$ we need also $d' \geq 4$.
\end{proof}

\section{$k$-gonal curves for $k\ge5$}\label{kgonal}

\begin{prop} \label{prop6.1}
Suppose $d_2 = 2d_1$. Then $\gamma_2 = \gamma'_2 = \gamma_1$ and all bundles computing $\gamma_2$ compute $\gamma'_2$.
Moreover, there exists a unique semistable bundle $E$ of degree $d_2$ computing $\gamma'_2$ and
$$
E \simeq Q \oplus Q,
$$
where $Q$ is the unique line bundle of degree $d_1$ computing $\gamma_1$.
\end{prop}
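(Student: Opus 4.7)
The plan is to proceed in four steps, establishing the numerical identities first, then uniqueness of $Q$, then narrowing down the structure of any candidate $E$, and finally forcing the extension to split. Throughout I will use heavily the already established lemmas \ref{lem2}, \ref{lempr}, \ref{lempr2}, \ref{l2.3} and \ref{mult}.

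First, the identity $\gamma_2=\gamma'_2=\gamma_1$ drops out of \eqref{eq4.1}: since $d_2=2d_1=2(\gamma_1+2)$, we have $\tfrac{d_2}{2}-1=\gamma_1+1$, so $\gamma_2=\gamma_1$, and then $\gamma_1\ge\gamma'_2\ge\gamma_2=\gamma_1$ forces equality. If $E$ computes $\gamma_2$, then $\gamma(E)=\gamma_1$ gives $d_E=2h^0(E)+2\gamma_1-4$; combined with $d_E\ge d_2=2\gamma_1+4$ from Lemma \ref{lem2}, this yields $h^0(E)\ge4$, so $E$ automatically computes $\gamma'_2$.

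Next I would establish uniqueness of $Q$ and the equality $h^0(Q^2)=3$. Any $Q$ of degree $d_1$ with $h^0\ge 2$ is base-point-free, for if $p$ were a base point, $Q(-p)$ would give a pencil of degree $d_1-1$, contradicting minimality. For two such bundles $Q_1,Q_2$ with $Q_1\not\simeq Q_2$, the base-point-free pencil trick applied to $Q_1$ yields an injection $H^0(Q_1)\otimes H^0(Q_2)\hookrightarrow H^0(Q_1\otimes Q_2)$ (the kernel $H^0(Q_2\otimes Q_1^{-1})$ vanishes since the bundle has degree $0$ and is non-trivial). Hence $h^0(Q_1\otimes Q_2)\ge 4$, forcing $d_{Q_1\otimes Q_2}=2d_1\ge d_3>d_2=2d_1$, a contradiction. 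The same trick with $Q_1=Q_2=Q$ gives the kernel $H^0(\mathcal O_C)$, so $h^0(Q^2)\ge 4-1=3$; on the other hand $h^0(Q^2)\le 3$ since $d_{Q^2}=d_2<d_3$. Hence $h^0(Q^2)=3$.

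For the main claim, let $E$ be semistable of degree $d_2=2d_1$ computing $\gamma'_2$, so $h^0(E)=4$. If $E$ admitted no line subbundle with $h^0\ge 2$, Lemma \ref{lempr} would give $d_E\ge d_4>d_2$, a contradiction (since $d_r$ is strictly increasing in $r$). Hence $E$ sits in $0\to M\to E\to N\to0$ with $h^0(M)\ge 2$. Semistability gives $d_M\le d_1$, while Lemma \ref{lempr2} combined with $\gamma(E)=\gamma_1$ forces $\gamma(M)=\gamma(N)=\gamma_1$ and all sections of $N$ lifting. Then $d_M\ge\gamma_1+2=d_1$, so $d_M=d_1$, $h^0(M)=2$, hence $M\simeq Q$; similarly $d_N=d_1$, $h^0(N)=2$, so $N\simeq Q$.

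Finally, by Lemma \ref{l2.3} a non-trivial extension $0\to Q\to E\to Q\to 0$ with all sections of the quotient lifting exists if and only if $\alpha:H^0(Q)\otimes H^0(K\otimes Q^*)\to H^0(K)$ fails to be surjective. Lemma \ref{mult} (with $F=G=Q$) computes $\mathrm{codim\,im}(\alpha)=h^0(Q^2)-3=0$, so $\alpha$ is surjective. Thus the extension splits and $E\simeq Q\oplus Q$. The main obstacle is really the $h^0(Q^2)=3$ computation: the hypothesis $d_2=2d_1$ is exactly what pins $h^0(Q^2)$ to the minimum value coming from the pencil trick, which in turn is exactly what Lemma \ref{mult} needs to forbid non-split extensions.
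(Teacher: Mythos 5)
Your argument reproduces the paper's proof almost step for step, but there is one genuine gap: from the first line onwards you use the equality $d_1=\gamma_1+2$, i.e.\ you assume that $d_1$ computes $\gamma_1$. That is exactly the existence of the line bundle $Q$ asserted in the statement, and it is not automatic: a priori one only knows $\gamma_1\le d_1-2$, with strict inequality precisely when $C$ has Clifford dimension $\ge 2$. You invoke the equality in three places: to identify $\tfrac{d_2}{2}-1$ with $\gamma_1+1$ (harmless, since the inequality $\gamma_1\le d_1-2$ already gives $\tfrac{d_2}{2}-1>\gamma_1$), to conclude $d_M\ge\gamma_1+2=d_1$ and hence $M\simeq Q$ (essential), and implicitly to know that $Q\oplus Q$ computes $\gamma_2'$ at all, i.e.\ that the asserted bundle exists (essential). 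If $C$ had Clifford dimension $\ge2$, no line bundle of degree $d_1$ would compute $\gamma_1$ and both the existence claim and the identification $M\simeq Q$ would collapse.

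The paper closes this gap by showing that the hypothesis $d_2=2d_1$ itself forces Clifford dimension $1$: for a smooth plane curve $d_2=d_1+1<2d_1$, and for a curve of Clifford dimension $r\ge3$ the estimates $d_1\ge\gamma_1+3=d_r-2r+3$ and $d_2\le d_r-r+2$ give $2d_1-d_2\ge d_r-3r+4\ge r+1>0$ (this is \eqref{eqnewnew}). You need to add this (or an equivalent) argument. Everything else --- the use of \eqref{eq4.1} for the numerics, Lemma \ref{lem2} to rule out $h^0(E)=3$, Lemma \ref{lempr} to produce a subpencil, Lemma \ref{lempr2} to force $\gamma(M)=\gamma(N)=\gamma_1$ with all sections of $N$ lifting, and the base-point-free pencil trick together with Lemmas \ref{mult} and \ref{l2.3} to get $M\simeq N\simeq Q$ and splitness --- coincides with the paper's proof; your explicit verification that $h^0(Q^2)=3$ is just a slightly more detailed version of the paper's observation that $h^0(M\otimes N)\le3$ because $d_{M\otimes N}=d_2<d_3$.
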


\begin{proof} 
If $d_2 = 2d_1$, then $\gamma_1 \leq d_1 - 2 < \frac{d_2}{2}-1$. So $\gamma_2 = \gamma_1$ 
and hence also $\gamma'_2 = \gamma_1$.

If $h^0(E) = 3$, then $d_E \geq d_2$ by Lemma \ref{lem2}. This gives $\gamma(E) > \gamma_1$, a contradiction.

Let $E$ be a semistable bundle of rank 2 and degree $d_2$ with $h^0(E) \geq 4$.
Since $d_4 > d_2$, Lemma \ref{lempr} implies that $E$ has a line subbundle $M$ with $h^0(M) \geq 2$ and 
hence $d_M \geq d_1$. Lemma \ref{lempr2} now gives
an extension
\begin{equation} \label{eq6.1}
0 \ra M \ra E \ra N \ra 0
\end{equation}
with $M$ and $N$ line bundles of degree $d_1$ computing $\gamma_1$ and all sections of $N$ lift to $E$. 
Since $d_{M \otimes N} = d_2$, we have $h^0(M \otimes N) \leq 3$ which implies that $M \simeq N$ and the extension 
\eqref{eq6.1} splits by Lemma \ref{mult}. So $M \simeq Q$ is unique (provided it exists) and $E \simeq Q \oplus Q$. Clearly
$\gamma(Q \oplus Q) = \gamma'_2$.

For existence of $Q$, we need to know that $d_1$ computes $\gamma_1$. This holds
because $d_2=2d_1$ cannot hold on a curve of Clifford dimension $\ge2$. This is obvious for smooth plane curves; for exceptional curves, we have (see the proof of Lemma \ref{lem4.1})
\begin{equation}\label{eqnewnew}
2d_1-d_2\ge 2d_r-4r+6-(d_r-r+2)=d_r-3r+4\ge r+1.
\end{equation}
\end{proof}

\begin{theorem} \label{thm6.2}
Suppose $d_2 = 2d_1$ and the line bundle $Q$ of degree $d_1$ is the only line bundle of degree $\leq g-1$ 
computing $\gamma_1$. Then the bundles computing $\gamma_2 = \gamma'_2$ are
\begin{enumerate}
\item $Q \oplus Q$;
\item possibly non-trivial extensions
\begin{equation*} 
0 \ra Q \ra E \ra K \otimes Q^* \ra 0
\end{equation*}
with $h^0(E) = h^0(Q) + h^0(K \otimes Q^*) = g + 3 - d_1$. 
\end{enumerate}
\end{theorem}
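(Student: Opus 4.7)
Plan. By Proposition~\ref{prop6.1} we already have $\gamma_2=\gamma'_2=\gamma_1=d_1-2$, and that proposition also disposes of the case $d_E=d_2$ (giving $E\simeq Q\oplus Q$). So I take an arbitrary bundle $E$ computing $\gamma_2$, set $s:=h^0(E)-2$, and record that $\gamma(E)=\gamma_1$ forces $d_E=2s+2d_1-4$. Lemma~\ref{lem2} rules out $s=1$: it would force $d_E\ge d_2=2d_1$, contradicting $d_E=2d_1-2$. Hence $s\ge 2$, and semistability $\mu(E)\le g-1$ gives the upper bound $s\le g+1-d_1$. I then split on whether $E$ admits a line subbundle with $h^0\ge 2$.

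If $E$ has no such subbundle, Lemma~\ref{lempr} yields $d_E\ge d_{2s}$. Since the gonality sequence is strictly increasing and $d_2=2d_1$,
$$d_{2s}\ge d_2+(2s-2)=2d_1+2s-2,$$
contradicting $d_E=2d_1+2s-4$. This case is impossible, and it is the cleanest part of the argument.

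In the remaining case write $0\to M\to E\to N\to 0$ with $h^0(M)\ge 2$. By Lemma~\ref{lempr2} we have $\gamma(M)=\gamma(N)=\gamma_1$ and every section of $N$ lifts to $E$. Semistability forces $d_M\le g-1$, so $M$ contributes to $\gamma_1$; the uniqueness hypothesis gives $M\simeq Q$. Now I split on $N$. If $d_N\le g-1$, uniqueness yields $N\simeq Q$; at this point I invoke Lemma~\ref{mult} with $F=G=Q$ together with the observation that $h^0(Q^2)\le 3$ (which follows from $d_{Q^2}=2d_1=d_2<d_3$), to conclude that the multiplication map $\alpha$ of Lemma~\ref{l2.3} is surjective, so the only extension with every section of $N$ lifting is the trivial one. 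Hence $E\simeq Q\oplus Q$, giving case~(1). If instead $d_N>g-1$, the bound $s\le g+1-d_1$ together with Riemann--Roch gives $h^0(K\otimes N^*)=h^1(N)\ge 2$, so $K\otimes N^*$ contributes to $\gamma_1$ with the same Clifford index $\gamma(N)=\gamma_1$; uniqueness then forces $K\otimes N^*\simeq Q$, i.e.\ $N\simeq K\otimes Q^*$. Since all sections of $N$ lift, $h^0(E)=h^0(Q)+h^0(K\otimes Q^*)=g+3-d_1$, placing $E$ in case~(2).

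The main obstacle is the two-step handling of Case~A: in the $N\simeq Q$ subcase the splitting is not automatic and requires matching the Paranjape--Ramanan lifting criterion (Lemma~\ref{l2.3}) with the explicit codimension formula in Lemma~\ref{mult}, feeding in the bound $h^0(Q^2)\le 3$; in the $N\simeq K\otimes Q^*$ subcase the delicate point is verifying via Serre duality that $K\otimes N^*$ actually contributes to $\gamma_1$ so that the uniqueness of $Q$ can be applied. Everything else reduces to the numerical comparison between $d_E=2d_1+2s-4$ and the gonality sequence, enabled directly by the hypothesis $d_2=2d_1$.
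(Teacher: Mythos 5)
Your proof is correct and follows essentially the same route as the paper: rule out the absence of a line subbundle with $h^0\ge2$ via Lemma~\ref{lempr} and the strict growth of the gonality sequence, then use Lemma~\ref{lempr2} and the uniqueness of $Q$ to reduce to the two extension types, splitting the $Q$-by-$Q$ case via Lemmas~\ref{l2.3} and \ref{mult} with $h^0(Q^2)\le3$ (this last step being exactly what the paper delegates to Proposition~\ref{prop6.1}). Your extra details (excluding $s=1$ via Lemma~\ref{lem2}, and checking $h^1(N)\ge2$ so that $K\otimes N^*$ genuinely contributes to $\gamma_1$) are points the paper leaves implicit but are consistent with its argument.
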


\begin{proof}
Let $E$ be a bundle computing $\gamma'_2 = \gamma_2$ and write $h^0(E) = 2 + s,\; s \geq 2$. If $E$ has no 
line subbundle with $h^0 \geq 2$, then by Lemma \ref{lempr}, $d_E \geq d_{2s}$. So
$$
\gamma(E) \geq \frac{d_{2s}}{2} - s \geq \frac{d_2}{2} - 1 = d_1 - 1 > \gamma_1,
$$
a contradiction.
So by Lemma \ref{lempr2} there exists an extension 
$$
0 \ra M \ra E \ra N \ra 0
$$ with $M$ and $N$ line bundles computing $\gamma_1$ and such that any section of $N$ lifts to $E$.
The only possibilities are $M \simeq N \simeq Q$ and $M \simeq Q, \; N \simeq K \otimes Q^*$.
The rest is contained in Proposition \ref{prop6.1}.
\end{proof}

\begin{cor} \label{cor6.3}
For $k \geq 5$, let $C$ be a general $k$-gonal curve of genus $g > \max\{3k^2-8k+7,46\}$. Then the only bundle computing 
$\gamma_2 = \gamma'_2$ is $Q \oplus Q$, where $Q$ is the unique line bundle of degree $\leq g-1$ computing $\gamma_1$. 
\end{cor}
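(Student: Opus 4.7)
The plan is to apply Theorem~\ref{thm6.2}, which gives a clean dichotomy for bundles computing $\gamma_2=\gamma_2'$ whenever $d_2=2d_1$ and the gonal line bundle $Q$ is the only line bundle of degree at most $g-1$ computing $\gamma_1$. The argument therefore splits into two independent parts: first, verify these hypotheses on a general $k$-gonal curve of the given genus; and second, rule out alternative (2) of that theorem so that only $Q\oplus Q$ survives.

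For the first part, I would appeal to the standard Brill--Noether theory of general $k$-gonal curves. For $g$ sufficiently large relative to $k$, it is classical that the gonal pencil $Q=g^1_k$ is unique, that $W^2_{2k-1}(C)=\emptyset$ (equivalently $d_2=2k$), and that no line bundle of degree at most $g-1$ other than $Q$ achieves $\gamma=k-2$. The hybrid bound $g>\max\{3k^2-8k+7,46\}$ is engineered so that these statements hold simultaneously: the bound $g>46$ dominates at $k=5$ (where the quadratic term gives only $42$), while the quadratic bound takes over for $k\ge6$.

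For the second part, by Lemma~\ref{l2.3} the non-existence of a non-trivial extension $0\to Q\to E\to K\otimes Q^*\to 0$ in which all sections of $K\otimes Q^*$ lift is equivalent to the surjectivity of
$$\mu\colon H^0(K\otimes Q^*)\otimes H^0(K\otimes Q^*)\lra H^0(K^2\otimes Q^{*2}).$$
Exactly as in Remarks~\ref{rem5.2} and \ref{rem5.15}, I would deduce this from \cite[Theorem~2(a)]{gl} applied to $L=K\otimes Q^*$. Very ampleness of $L$ follows by Serre duality: for $p,q\in C$, $h^0(K\otimes Q^*(-p-q))=h^0(K\otimes Q^*)-2$ iff $h^0(Q(p+q))=h^0(Q)=2$, and the alternative $h^0(Q(p+q))=3$ would produce a $g^2_{k+2}$, contradicting $d_2=2k>k+2$ (which holds since $k\ge 5$). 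Substituting $\deg L=2g-2-k$, $h^1(L)=2$ and $\gamma_1=k-2$ into the numerical hypothesis of Green--Lazarsfeld should, after rearrangement, yield precisely the stated bound $g>3k^2-8k+7$.

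The main obstacle is matching this last calculation to the quadratic threshold on the nose; the analogues for $k=3$ and $k=4$ (namely $g\ge17$ and $g\ge27$) are explicitly flagged in the paper as ``probably not best possible,'' and the same caveat will apply here. Apart from this bookkeeping, the remaining ingredients---Brill--Noether for general $k$-gonal curves, the very-ampleness check, and Lemma~\ref{l2.3}---are routine once the correct references are plugged in.
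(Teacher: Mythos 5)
Your skeleton matches the paper's: verify the hypotheses of Theorem \ref{thm6.2} (namely $d_2=2d_1$ and uniqueness of the line bundle of degree $\le g-1$ computing $\gamma_1$), then kill case (2) of that theorem via Lemma \ref{l2.3} and Green--Lazarsfeld. But you have the two numerical thresholds playing the wrong roles, and this hides a genuine gap. The Green--Lazarsfeld computation for $L=K\otimes Q^*$ yields surjectivity of the multiplication map for $g>\max\left\{\frac{k(k+1)}{2},\,10k-4\right\}$ --- \emph{not} for $g>3k^2-8k+7$ --- and the constant $46$ in the hypothesis is there precisely to cover $10k-4$ at $k=5$; for $k\ge6$ the quadratic bound already dominates both terms. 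So your plan to ``match this last calculation to the quadratic threshold on the nose'' cannot succeed: the quadratic bound does not come from Green--Lazarsfeld at all.

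It comes from the step you dismiss as classical: showing that $Q$ is the \emph{only} line bundle of degree $\le g-1$ computing $\gamma_1$. This requires controlling line bundles of every degree up to $g-1$ and every $h^0$, not merely the emptiness of $W^2_{2k-1}$. The paper does it via Kim's theorem \cite{k} that $d_r=kr$ for $r\le r_0:=\left[\frac{1}{k-2}\left[\frac{g-4}{2}\right]\right]$ on a general $k$-gonal curve: a line bundle $L$ with $d_L\le kr_0$ and $h^0(L)\ge 3$ has $\gamma(L)\ge(k-2)r>\gamma_1$, while one with $d_L>kr_0$ satisfies $h^0(L)-1\le d_L-kr_0+r_0$, whence $\gamma(L)\ge 2(k-1)r_0-g+1$, and the requirement that this exceed $\gamma_1=k-2$ unwinds to exactly $g>3k^2-8k+7$. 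Without this argument (or a precise citation replacing it), the uniqueness hypothesis of Theorem \ref{thm6.2} is unverified and the corollary does not follow. The remainder of your outline --- $d_2=2d_1$ from Kim's theorem for $g\ge 4k-4$, the very-ampleness of $K\otimes Q^*$ via $h^0(Q(p+q))=2$, and the reduction through Lemma \ref{l2.3} --- is correct and is what the paper does.
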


\begin{proof}
We use the fact (see \cite[Theorem 3.1]{k}) that
$$
d_r = kr \quad \mbox{for} \quad 1 \leq r \leq \frac{1}{k-2} \left[ \frac{g-4}{2} \right].
$$
In particular we have $d_2 = 2d_1$ for $g \geq 4k-4$ which holds by hypothesis. Now write
$$
r_0 := \left[ \frac{1}{k-2} \left[ \frac{g-4}{2} \right] \right].
$$
A line bundle $L$ with $d_L > kr_0$ has $h^0(L) - 1 \leq d_L -kr_0 + r_0$. Given that $d_L \leq g-1$, this implies that
$$
\gamma(L) \geq g-1 - 2(g-1 -kr_0 + r_0) = 2(k-1)r_0 -g + 1.
$$ 
To get $\gamma(L) > \gamma_1$ we therefore require
\begin{equation} \label{eq6.2}
2(k-1)r_0 -g + 1 > k-2.
\end{equation}
To prove this, note that $r_0 \geq \frac{\frac{g-5}{2} -k + 3}{k-2}$ which is equivalent to
$$
2(k-1)r_0 \geq g + \frac{1}{k-2} [g - (k-1)(2k-1)].
$$
It is therefore sufficient to have
$$
\frac{1}{k-2}[g - (k-1)(2k-1)] > k-3
$$
which is true by our hypothesis. Proposition \ref{prop6.1} implies the uniqueness 
of the line bundle computing $\gamma_1$. 

It remains to show that case (2) of Theorem \ref{thm6.2} does not occur. By Lemma 2.8 this means that we must show that the map
$$
H^0(K \otimes Q^*) \otimes H^0(K \otimes Q^*) \ra H^0(K^2 \otimes {Q^*}^2)
$$ 
is surjective. The argument of Remark \ref{rem5.2} shows that $K \otimes Q^*$ is very ample. Now by 
\cite[Theorem 2(a)]{gl} the map is surjective for 
\begin{equation}\label{eqsurj}
g > \max\left\{ \frac{k(k+1)}{2}, 10k-4 \right\}.
\end{equation}
This inequality holds under our hypothesis on $g$. 
\end{proof}

\begin{rem}The number $46$ as a lower bound for $g$ is required only to ensure that \eqref{eqsurj} holds when $k=5$.
\end{rem}

\begin{rem} \label{rem6.3}
Suppose $d_2 = 2d_1 -1$ and $d_3 > 2d_1$. Then $d_1$ computes $\gamma_1$ (see \eqref{eqnewnew}), $\gamma_2 = \gamma'_2 = \gamma_1$ and all bundles
computing $\gamma_2$ also compute $\gamma'_2$. Moreover, the proof of Proposition \ref{prop6.1} shows that there is a unique 
line bundle $Q$ of degree $d_1$ computing $\gamma_1$ and that $h^0(Q^2) = 3$. 
If $Q$ is the only line bundle of degree $\leq g-1$ computing $\gamma_1$, then the proof of Theorem \ref{thm6.2} works 
with the same conclusion.
\end{rem}

\begin{rem}
If $d_2 = 2d_1 -2$, then $d_1$ computes  $\gamma_1 = \frac{d_2}{2} - 1$. So again $\gamma_2 = \gamma'_2 = \gamma_1$. 
In this case the bundles $E_L$ with $L$ a line bundle of degree $d_2$ with $h^0(L) = 3$ compute $\gamma_2$, 
but not $\gamma'_2$.
\end{rem}

\begin{prop} \label{prop7.1}
If $d_1$ computes $\gamma_1$ and $d_2 \leq 2d_1 -2$, the bundles computing $\gamma_2$ but not $\gamma'_2$ are precisely the bundles $E_L$, 
where $L$ is a line bundle of degree $d_2$ with $h^0(L) = 3$, and all such bundles are stable.

If $d_1$ computes $\gamma_1$, $d_2 < 2d_1 -2$ and $d_4 > d_2+2$, then these are the only bundles computing $\gamma_2 < \gamma'_2$.
\end{prop}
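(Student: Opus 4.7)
The plan is to reduce both parts directly to results already established in Section \ref{prelim}, with one short additional argument in the second part to force $h^0(E)=3$ for any bundle computing $\gamma_2$.

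For the first statement, the key observation is that the hypothesis ``$d_1$ computes $\gamma_1$'' is exactly the statement $\gamma_1 = d_1 - 2$. Combined with $d_2 \le 2d_1 - 2$, this gives $d_2 \le 2\gamma_1 + 2$, which is precisely the hypothesis of Corollary \ref{cor2}. That corollary therefore identifies the bundles computing $\gamma_2$ but not $\gamma'_2$ with the bundles $E_L$ associated to line bundles $L$ of degree $d_2$ with $h^0(L)=3$. For stability, I would invoke Lemma \ref{lem0}(ii): since $d_L = d_2 \le 2d_1 - 2 < 2d_1$, every such $E_L$ is stable.

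For the second statement, I would first show that the stronger hypotheses $d_2 < 2d_1 - 2$ and $d_4 > d_2 + 2$ imply $\gamma_2 < \gamma'_2$. Since $\gamma_1 = d_1-2$, the strict inequality $d_2 < 2d_1 - 2$ gives $\tfrac{d_2}{2}-1 < \gamma_1$, and hence $\gamma_2 = \tfrac{d_2}{2}-1$ by \eqref{eq4.1}. On the other hand, \eqref{eq4.1} also gives $\gamma'_2 \ge \min\{\gamma_1, \tfrac{d_4}{2}-2\}$; since $\gamma_1 > \tfrac{d_2}{2}-1 = \gamma_2$ and $\tfrac{d_4}{2}-2 > \tfrac{d_2}{2}-1 = \gamma_2$ (using $d_4 > d_2 + 2$), we obtain $\gamma'_2 > \gamma_2$.

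Now let $E$ be any bundle computing $\gamma_2$, so in particular $h^0(E) \ge 3$. If one had $h^0(E) \ge 4$, then $E$ would contribute to $\gamma'_2$, forcing $\gamma(E) \ge \gamma'_2 > \gamma_2$ and contradicting the assumption $\gamma(E) = \gamma_2$. Thus $h^0(E) = 3$, so $E$ computes $\gamma_2$ but not $\gamma'_2$, and the first part of the proposition identifies $E$ with some $E_L$. Conversely, every such $E_L$ computes $\gamma_2$ and, since $\gamma_2 < \gamma'_2$, does not compute $\gamma'_2$. There is no serious obstacle; the only subtle point is recognising that the strict inequalities are exactly what is needed to separate $\gamma_2$ from $\gamma'_2$ in \eqref{eq4.1}, after which everything collapses to a single application of Corollary \ref{cor2}.
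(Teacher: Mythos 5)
Your proposal is correct and follows essentially the same route as the paper: the first part is Corollary \ref{cor2} (via $d_2\le 2\gamma_1+2$) plus Lemma \ref{lem0}(ii) for stability, and the second part deduces $\gamma_2<\gamma_2'$ from \eqref{eq4.1} so that every bundle computing $\gamma_2$ has $h^0=3$ and falls under the first part. You merely spell out the inequality $\gamma_1>\tfrac{d_2}{2}-1$ and the $h^0(E)=3$ step, which the paper leaves implicit.
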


\begin{proof}
Since $\gamma_1 = d_1 -2$, the first statement follows from Corollary \ref{cor2} and Lemma \ref{lem0}.

Now suppose $d_2 < 2d_1 -2$ and $d_4 > d_2+2$; then $\frac{d_4}{2}-2 > \frac{d_2}{2} -1$. So $\gamma'_2 > \gamma_2$ by
\eqref{eq4.1}. Hence the bundles $E_L$ are the only bundles computing $\gamma_2$. 
\end{proof}

\begin{rem} 
If $d_2 < 2d_1 -2$ and $d_4 = d_2 + 2$, then the bundles $E_L$ for $L$ of degree $d_2$ 
still compute $\gamma_2$. However there may exist further bundles computing simultaneously $\gamma_2$ and $\gamma'_2$.
If such bundles exist, then $\gamma'_2 < \gamma_1$. So they would give counterexamples to Mercat's conjecture.
\end{rem}

\section{General curves} \label{general}

For $g \leq 6$, Remark \ref{rem5.3} and Propositions \ref{prop5.7} and \ref{prop5.8} apply to general curves. 
In this section we consider general curves of genus $g \geq 7$.

\begin{prop} \label{prop7.3}
For a general curve of genus $g \geq 7, \; g \neq 8$, the bundles $E_L$ with $d_L = d_2$ compute $\gamma_2$ 
and are the only bundles computing $\gamma_2$.
\end{prop}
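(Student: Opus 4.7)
The plan is to apply Proposition \ref{prop7.1} directly: once its three hypotheses are verified for a general curve of genus $g \ge 7$ with $g \ne 8$, the conclusion is immediate. The entire proof reduces to an arithmetic check on the explicit gonality sequence of a Petri curve, so there is no conceptual obstacle beyond explaining why $g = 8$ must be excluded.

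First, since a general curve is a Petri curve, formula \eqref{eqdr} supplies the explicit values $d_r = g + r - \lfloor g/(r+1)\rfloor$. In particular $d_1 = \lfloor (g+3)/2 \rfloor$, so $\gamma_1 = d_1 - 2$ and therefore $d_1$ computes $\gamma_1$, which gives the first hypothesis of Proposition \ref{prop7.1}. For the second hypothesis $d_2 < 2d_1 - 2$, one computes
$$
2d_1 - d_2 = g - 2\lfloor g/2 \rfloor + \lfloor g/3 \rfloor,
$$
and a short case-check on the residue of $g$ modulo $6$ shows that this quantity is at least $3$ for every $g \ge 7$ except $g = 8$, where it equals exactly $2$. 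This is precisely why $g = 8$ must be excluded from the statement: there $d_2 = 2d_1 - 2$, hence $\gamma_2 = \gamma_1$ and extra bundles (notably $Q \oplus Q$ for a line bundle $Q$ of degree $d_1$ computing $\gamma_1$) appear. For the third hypothesis $d_4 > d_2 + 2$, one has
$$
d_4 - d_2 = 2 + \lfloor g/3 \rfloor - \lfloor g/5 \rfloor,
$$
which is at least $3$ for $g \ge 15$ using the crude estimate $\lfloor g/3 \rfloor - \lfloor g/5 \rfloor \ge 2g/15 - 1$, with the remaining values $7 \le g \le 14$ dispatched by direct substitution.

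Once all three hypotheses are in hand, the strict inequality $d_2 < 2d_1 - 2$ gives $\gamma_2 = d_2/2 - 1 < d_1 - 2 = \gamma_1$; combined with $d_4 > d_2 + 2$ this produces $\gamma_2 < \gamma_2'$ via \eqref{eq4.1}, and the second half of Proposition \ref{prop7.1} identifies the bundles $E_L$ with $d_L = d_2$ as the only ones computing $\gamma_2$. The main (and only) obstacle is the bookkeeping in the middle paragraph; everything else is an automatic consequence of Proposition \ref{prop7.1} together with the Petri formula \eqref{eqdr}.
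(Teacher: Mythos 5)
Your proof is correct and follows the same route as the paper: the paper's own proof simply recalls $\gamma_1=\left[\frac{g-1}{2}\right]$ and $d_r=r+g-\left[\frac{g}{r+1}\right]$ for a general curve, states that a "simple numerical computation" yields $d_2<2d_1-2$ and $d_4>d_2+2$, and invokes Proposition \ref{prop7.1}. You have merely written out that numerical computation (and the verification that $d_1$ computes $\gamma_1$) explicitly, and your arithmetic, including the identification of $g=8$ as the unique exception where $d_2=2d_1-2$, checks out.
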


\begin{proof}
Recall that for a general curve, 
\begin{equation} \label{dr}
\gamma_1 = \left[\frac{g-1}{2} \right] \quad \mbox{and} \quad d_r = r + g - \left[ \frac{g}{r+1} \right]
\end{equation}
(see \cite[Remark 4.4(c)]{cl} and \eqref{eqdr}).
A simple numerical computation 
using \eqref{dr} shows that $d_2 < 2d_1 -2$ and $d_4 > d_2 + 2$. So Proposition \ref{prop7.1} applies.
\end{proof}

The general curve of genus 8 requires separate treatment because $d_2 = 2d_1 -2$.

\begin{prop} \label{prop7.4}
For a general curve of genus $g = 8$ the bundles computing $\gamma_2= \gamma'_2=3$ are 
\begin{enumerate}
\item $E_L$ with $d_L = d_2 = 8$;
\item $Q \oplus Q'$ with $d_Q = d_{Q'} = d_1 = 5$ and $h^0(Q) = h^0(Q') =2$;
\item stable bundles $E$ of degree $14$ with $h^0(E) = 6$ given by a non-trivial extension 
\begin{equation*} 
0 \ra Q \ra E \ra K \otimes Q'^* \ra 0
\end{equation*} 
(such bundles exist when $Q \simeq Q'$).
\end{enumerate} 
 
\end{prop}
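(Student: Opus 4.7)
The plan is to follow the template of Propositions \ref{prop7.3}, \ref{prop5.8}, and \ref{prop5.9}. First, from \eqref{eqdr} one gets $d_1=5$, $d_2=8$, $d_4=11$ on a general curve of genus $8$, and $\gamma_1=d_1-2=3$; together with \eqref{eq4.1} this gives $\gamma_2=\min\{3,3\}=3$ and $\gamma'_2\ge\min\{3,\frac{7}{2}\}=3$, so $\gamma'_2=\gamma_1=3$. We then classify bundles $E$ computing $\gamma_2$ according to $h^0(E)$. For $h^0(E)=3$, Lemmas \ref{lem2} and \ref{lem0} give case (1) immediately, since $d_2<2d_1$. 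For $h^0(E)=2+s$ with $s\ge 2$, the inequality $d_{2s}\ge d_4+2s-4$ together with Lemma \ref{lempr} forces $\gamma(E)\ge\frac{7}{2}>3$ unless $E$ admits a line subbundle $M$ with $h^0(M)\ge 2$; Lemma \ref{lempr2} then supplies an exact sequence $0\to M\to E\to N\to 0$ with $\gamma(M)=\gamma(N)=3$ and all sections of $N$ lifting.

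A short numerical argument using $d_M\le g-1=7$ and $d_2=8$ shows that any line bundle of degree $\le 7$ computing $\gamma_1$ must be a pencil of degree $5$. So $M\simeq Q$ with $d_Q=5,\ h^0(Q)=2$, and either $N$ is itself a pencil $Q'$ of degree $5$ (so $d_E=10$, the setup of (2)) or $K\otimes N^*$ is such a pencil $Q'$ (so $d_E=14$ with $h^0(N)=h^1(Q')=4$, the setup of (3)). In the degree-$10$ case, I need to exclude every non-trivial extension with all sections of $Q'$ lifting. By Lemmas \ref{l2.3} and \ref{mult} this reduces to proving $h^0(Q\otimes Q')=4$ when $Q\not\simeq Q'$ and $h^0(Q^2)=3$ when $Q\simeq Q'$. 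The latter is Lemma \ref{lem2.10}, since a general genus-$8$ curve is Petri and $g$ is even. For the former, the minimal pencil $Q'$ is base-point-free (otherwise $Q'(-p)$ would be a $g^1_4$, contradicting $d_1=5$), so the base-point-free pencil trick gives $h^0(Q\otimes Q')\ge 2h^0(Q)=4$; on the other hand $h^1(Q\otimes Q')=h^0(K\otimes Q^*\otimes Q'^*)$ is the $h^0$ of a line bundle of degree $4$, which must be $\le 1$ (else its Clifford index would be $\le 2<\gamma_1$), and Riemann--Roch gives $h^0(Q\otimes Q')\le 4$. Hence $\alpha$ is surjective in every sub-case and only the split bundle $Q\oplus Q'$ of (2) survives.

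In the degree-$14$ case, $h^0(E)=6$ requires all $4$ sections of $K\otimes Q'^*$ to lift. When $Q\simeq Q'$, Lemma \ref{l2.3} provides a non-trivial such extension as soon as the multiplication map $H^0(K\otimes Q^*)\otimes H^0(K\otimes Q^*)\to H^0(K^2\otimes Q^{*2})$ fails to be surjective, and this is automatic because it factors through $S^2H^0(K\otimes Q^*)$ of dimension $10$, while the target has dimension $h^0(K^2\otimes Q^{*2})=11$ by Riemann--Roch. Semistability of any non-split extension of this shape is immediate (the split extension has $\mu(K\otimes Q^*)=9>7=\mu(E)$, and any other line subbundle of degree $\ge 8$ would have to be isomorphic to $K\otimes Q^*$), while stability of the generic such $E$ follows from a parameter count in the $11$-dimensional space $H^1(Q^2\otimes K^*)$ of extension classes: the subvariety of classes for which some $K\otimes Q^*(-D)$ with $\deg D\in\{1,2\}$ lifts has positive codimension.

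The main obstacle will be the degree-$10$ case: pinning down $h^0(Q\otimes Q')$ exactly (and not merely bounding it above) when $Q\not\simeq Q'$, because any larger value would force additional non-split bundles into the classification. The argument is a two-sided estimate combining the base-point-free pencil trick with a Clifford-index bound on the Serre dual, and is more delicate than the Petri-based computation of Lemma \ref{lem2.10}. A secondary technicality is the stability assertion in case (3), which requires the parameter count sketched above to exclude sub-line-bundles of degree $7$ or $8$ other than $K\otimes Q^*$ itself.
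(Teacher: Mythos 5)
Your overall strategy coincides with the paper's: compute $d_1=5$, $d_2=8$, $d_4=11$, $\gamma_1=3$; handle $h^0(E)=3$ via Corollary \ref{cor2}; use Lemmas \ref{lempr} and \ref{lempr2} to force an extension $0\to Q\to E\to N\to 0$ with all sections of $N$ lifting; kill the degree-$10$ non-split extensions via Lemmas \ref{l2.3}, \ref{mult} and \ref{lem2.10} (your Serre-duality bound $h^0(K\otimes Q^*\otimes Q'^*)\le 1$ is a harmless variant of the paper's appeal to $d_4=11$); and produce the degree-$14$ bundles from the dimension count $\dim S^2H^0(K\otimes Q^*)=10<11=h^0(K^2\otimes Q^{*2})$. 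All of this is correct and matches the paper.

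The gap is in your treatment of semistability and stability in case (3). Your parenthetical claim that ``any other line subbundle of degree $\ge 8$ would have to be isomorphic to $K\otimes Q^*$'' is false: a line subbundle $M'$ of degree $8$ cannot lie in $Q$ (which has degree $5$), so it injects into $K\otimes Q'^*$, but then $M'\simeq K\otimes Q'^*(-p)$ for some point $p$, which is not isomorphic to $K\otimes Q'^*$ and does not split the sequence. So you have not excluded degree-$8$ subbundles, i.e.\ you have not proved semistability, which is needed even for the extension to compute $\gamma_2$ at all. Your fallback --- a parameter count in the $11$-dimensional extension space showing the \emph{generic} class gives a stable bundle --- is not carried out and in any case would not suffice, since the proposition asserts that \emph{every} bundle of type (3) computing $\gamma_2$ is stable, not just a generic one. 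The paper's argument is both simpler and complete: if $E$ has a line subbundle $M'$ of degree $7$ or $8$, then $h^0(M')\le 2$ or $\le 3$ (because $d_2=8$ and $d_3=9$), while the quotient has degree $7$ or $6$ and hence $h^0\le 2$; so $h^0(E)\le 5<6$, a contradiction. This single section count rules out all destabilising and strictly semistabilising subbundles at once. You should replace your parenthetical and parameter count by this argument.
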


\begin{proof}
The values of $d_r$ come from \eqref{dr}. It follows from \eqref{eq4.1} that $\gamma_2 = \gamma'_2 = \gamma_1$.  
The bundles $E_L$ and $Q \oplus Q'$ certainly 
have $\gamma(E) = \gamma_2$. 

We have also $h^0(Q^2) = 3$ by Lemma \ref{lem2.10} and it follows 
from Lemma \ref{mult} that there are no non-trivial extensions $E$ of $Q$ by $Q$ with $h^0(E) = 4$.

If $Q \not \simeq Q'$, then $h^0(Q \otimes Q') = 4$ since $d_4 = 11$. Again by Lemma \ref{mult} 
there are no non-trivial extensions $E$ of $Q$ by $Q'$ with $h^0(E) = 4$.

It remains to determine whether there exist any semistable rank-2 bundles $E$ of degree $d_E$ with 
$2d_1 < d_E \leq 2g-2$ and $\gamma(E) = \gamma_2$.

There are now 2 possibilities $d_E = 12, \; h^0(E) = 5$ and $d_E = 14$,  $h^0(E) = 6$.
Lemmas \ref{lempr} and \ref{lempr2} imply that $E$ must occur in an extension $0 \ra Q \ra E \ra N \ra 0$
with $\gamma(N) = \gamma_1$ 
and $h^0(N) = h^0(E) -2$. If $d_E = 12$, we have $h^0(N) = 3$ giving $d_N \geq d_2 = 8$, a contradiction. 
If $d_E = 14$, then $d_N = 9$ and $h^0(N) = 4$. So $N$ is the Serre dual of a line bundle $Q'$.

For the last statement we have to show according to Lemma \ref{l2.3} that
the map 
$$
H^0(K \otimes Q^*) \otimes H^0(K \otimes Q^*) \ra H^0(K^2 \otimes {Q^*}^2)
$$
is not surjective. In this case the map factors through $S^2H^0(K \otimes Q^*)$.  
However we have $\dim S^2H^0(K \otimes Q^*) = 10$ 
and $h^0(K^2 \otimes {Q^*}^2) = 11$. 

It remains to prove stability of $E$. If $E$ is not stable, it would have a line subbundle of degree $7$ or $8$. Since $d_2=8$, this implies that $h^0(E)\le5$, a contradicton.
\end{proof}

We now consider the problem of finding bundles computing $\gamma'_2$ for general curves of genus 
$g \geq 7,\; g \neq 8$. It follows from \eqref{eq4.1} and \eqref{dr} that $\gamma'_2 = \gamma_1$ 
if $C$ is a general curve of genus $\leq 10$. 
This has also been proved for $g \leq 16$ in \cite[Theorem 1.7]{fo} (it is a consequence of \eqref{eq4.1} and \eqref{dr}
for $g \leq 10,\; g = 12$ and $g=14$). It is conjectured in \cite{fo} that this holds
for general curves of arbitrary genus.

\begin{lem} \label{lem7.3}
Suppose $C$ is a general curve of genus $g \geq 7$. Then the only line bundles computing $\gamma_1$ 
of degree $\leq g-1$ have degree $d_1$, except when $g =9$, where there are also bundles of degree $d_2$ 
computing $\gamma_1$.
\end{lem}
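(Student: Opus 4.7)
The plan is to parametrise the line bundles $L$ of degree $d_L\le g-1$ computing $\gamma_1$ by the integer $r:=h^0(L)-1\ge 1$. The equation $d_L-2r=\gamma_1$ together with existence of such $L$ on a general curve (which by Brill--Noether on a Petri curve is equivalent to $d_L\ge d_r$) reduces the lemma to identifying all integers $r\ge 1$ satisfying
\begin{equation*}
\gamma_1+2r\le g-1\qquad\text{and}\qquad \gamma_1+2r\ge d_r,
\end{equation*}
with $\gamma_1$ and $d_r$ given by \eqref{dr}.

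For $r=1$, a direct check splitting on the parity of $g$ gives $d_1=\gamma_1+2$ and $d_1\le g-1$ whenever $g\ge 5$, so line bundles of degree $d_1$ always contribute and no further work is needed here.

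For $r\ge 2$ I would split on the parity of $g$ and clear the floor $\lfloor g/(r+1)\rfloor$ appearing in $d_r$. In the even case $g=2m$, the inequality $\gamma_1+2r\ge d_r$ reduces to $(r-1)(r+1)\ge m(r-1)$, i.e.\ $m\le r+1$, while $\gamma_1+2r\le g-1$ forces $2r\le m$; combining gives $r\le 1$, excluding this case. In the odd case $g=2m+1$, the same simplification yields $m(r-1)\le r^2$, while $\gamma_1+2r\le g-1$ again forces $2r\le m$. For $r=2$ these two inequalities pin $m=4$, so $g=9$, which is the exceptional case: indeed $d_2=8=\gamma_1+2\cdot 2$, and any $L$ of degree $d_2$ with $h^0(L)\ge 3$ computes $\gamma_1$. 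For $r\ge 3$ the bound $m\le r+r/(r-1)\le r+2$ is incompatible with $m\ge 2r$, ruling these out.

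The only delicate point is the floor $\lfloor g/(r+1)\rfloor$, which has to be handled precisely enough that the single exceptional pair $(g,r)=(9,2)$ is correctly retained rather than excluded by a too-loose estimate. Once this is done, the argument is a short numerical case analysis resting only on the Brill--Noether formulas \eqref{dr} valid on a general curve.
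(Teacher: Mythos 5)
Your argument is correct and follows essentially the same route as the paper: both reduce the statement to the numerical conditions $d_L=\gamma_1+2r\ge d_r$ and $d_L\le g-1$ via \eqref{dr}, clear the floor functions to get a polynomial inequality in $r$ and $g$, and then check that only $(g,r)=(9,2)$ survives for $r\ge 2$. The paper organises the case analysis by $r$ (treating $r=2$ separately) rather than by the parity of $g$, but the content is the same.
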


\begin{proof}
For $g = 9$, we have $d_1 = 6$ and $d_2 = 8 = g-1$. The result follows.

For $g \neq 9$, we have to show that $d_r -2r > \gamma_1$ whenever $d_r \leq g-1$ and $r > 1$.

By \eqref{dr}, the condition $d_r \leq g-1$ is equivalent to $g \geq (r+1)^2$. So we require to prove that this implies that
$$
g-r - \left[ \frac{g}{r+1} \right] > \left[ \frac{g-1}{2} \right].
$$
It is sufficient to prove
$$
g-r - \frac{g}{r+1} > \frac{g-1}{2},
$$
which is equivalent to
$$
g(r-1) > (2r-1)(r+1).
$$
Since $g \geq (r+1)^2$, this is true for $r > 2$. For $r=2$ we have $d_2 = g+2 - \left[\frac{g}{3} \right]$. 
For $g =7$ and 8 this is $> g-1$. For $g \geq 10, \; d_2 > d_1 + 2$. So bundles of degree $d_2$ cannot compute $\gamma_1$.
\end{proof}

\begin{theorem} \label{prop7.5}
Let $C$ be a general curve of genus $g \geq 7, \;  g \neq 8$. Suppose that $\gamma'_2 = \gamma_1$. 
Then $\gamma_2 < \gamma'_2$ and
the bundles computing $\gamma'_2$ are
\begin{enumerate}
\item $Q \oplus Q'$ where $Q,\, Q'$ are bundles computing $\gamma_1$ of degree $d_1$ or, if $g = 9$, of degree $d_2$;
\item possibly non-trivial extensions
$$
0 \ra Q \ra E \ra K \otimes Q'^* \ra 0
$$
where all sections of $K \otimes Q'^*$ lift to $E$;
\item for $g$ odd, non-trivial extensions
$$
0 \ra Q \ra E \ra Q' \ra 0
$$
where all sections of $Q'$ lift to $E$ (such extensions always exist when $Q \simeq Q'$ and $d_Q=d_1$);
\item possibly stable bundles not possessing a line subbundle with $h^0 \geq 2$. We have necessarily
$h^0(E) = 2+s$ with
\begin{equation} \label{eq*}
2 \leq s \leq \left\{ \begin{array}{lll}
                      \frac{g-1}{2} & \mbox{if} & g \; \mbox{is odd},\\
                      \frac{g-2}{4} & \mbox{if} & g \; \mbox{is even}
                      \end{array} \right.
\end{equation}
and $d_E = 2 \gamma_1 + 2s$.
\end{enumerate}
\end{theorem}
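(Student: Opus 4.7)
The plan is to follow the template of Propositions \ref{prop5.7}--\ref{prop5.9}, classifying bundles $E$ computing $\gamma'_2$ via the Paranjape--Ramanan dichotomy. First I establish the strict inequality $\gamma_2<\gamma'_2$: by Proposition \ref{prop7.3}, $\gamma_2 = d_2/2 - 1$, and for a general curve of genus $g\ge 7$ with $g\neq 8$, a direct computation using the explicit formula \eqref{dr} and $\gamma_1=\lfloor(g-1)/2\rfloor$ gives $d_2 < 2\gamma_1+2$, whence $\gamma_2 < \gamma_1 = \gamma'_2$. Consequently any $E$ computing $\gamma'_2$ has $h^0(E)=2+s$ with $s\ge 2$, and $\gamma(E)=\gamma_1$ forces $d_E = 2\gamma_1+2s$.

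Next I split on whether $E$ admits a line subbundle $M$ with $h^0(M)\ge 2$. If not (Case A, yielding (4)), Lemma \ref{lempr} gives $d_E\ge d_{2s}$. Substituting $d_{2s} = 2s+g-\lfloor g/(2s+1)\rfloor$ from \eqref{dr} and the value of $\gamma_1$, the resulting inequality splits by parity: for $g$ odd, it is equivalent to $g\ge 2s+1$, giving $s\le(g-1)/2$ (coinciding with the semistability bound $d_E\le 2g-2$); for $g$ even, it is equivalent to $\lfloor g/(2s+1)\rfloor\ge 2$, giving $s\le(g-2)/4$. This is precisely the bound \eqref{eq*}.

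If such an $M$ exists (Case B), Lemma \ref{lempr2} produces an extension $0\to M\to E\to N\to 0$ with $\gamma(M)=\gamma(N)=\gamma_1$ and all sections of $N$ lifting to $E$. Lemma \ref{lem7.3} then constrains the possibilities: on a general curve of genus $g\ge 7$, any line bundle of degree $\le g-1$ computing $\gamma_1$ has degree $d_1$ (or also $d_2$ when $g=9$). Hence $M=Q$ is such a line bundle, and $N$ is either a similar $Q'$ (when $d_N\le g-1$), leading to cases (1) and (3), or $N=K\otimes Q'^*$ (when $d_N>g-1$), giving case (2). For the existence assertion in (3) with $Q\simeq Q'$, $d_Q=d_1$ and $g$ odd, Lemma \ref{lem2.10} gives $h^0(Q^2)=4$, so Lemma \ref{mult} applied with $F=G=Q$ yields $\codim\IM(\alpha) = h^0(Q^2)-3 = 1$; by Lemma \ref{l2.3}, non-trivial extensions with all sections of $Q$ lifting therefore exist. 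The restriction to $g$ odd is essential here: for $g$ even, Lemma \ref{lem2.10} gives $h^0(Q^2)=3$, forcing $\alpha$ surjective and ruling out case (3) with $Q\simeq Q'$.

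The main obstacle lies in Case B: verifying via Lemmas \ref{mult} and \ref{l2.3} the precise conditions under which non-split extensions preserve $h^0$ (parity of $g$ and whether $Q\simeq Q'$ both play a role), and confirming semistability of the resulting bundles (which follows from $\mu(E)=\gamma_1+s\le g-1$, using either the bound \eqref{eq*} or the specific degrees of $Q$ and $Q'$). The parity analysis in Case A is delicate but becomes routine once the Paranjape--Ramanan bound and the formula for $d_{2s}$ are in hand.
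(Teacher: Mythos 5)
Your overall strategy coincides with the paper's: establish $\gamma_2<\gamma_2'$ numerically from \eqref{eq4.1} and \eqref{dr}, split on whether $E$ has a line subbundle with $h^0\ge2$, use Lemma \ref{lempr2} together with Lemma \ref{lem7.3} to reduce the subbundle case to types (1)--(3), use Lemma \ref{lempr} and \eqref{dr} to derive the bound \eqref{eq*} in the remaining case, and invoke Lemmas \ref{l2.3}, \ref{mult} and \ref{lem2.10} for the existence assertion in (3). The parity computation behind \eqref{eq*} is correct. There are, however, two concrete gaps.

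First, to justify the restriction ``for $g$ odd'' in (3) you only exclude the case $Q\simeq Q'$ when $g$ is even (via $h^0(Q^2)=3$). For $g\ge10$ even a general curve carries several pencils of degree $d_1$, so you must also exclude non-trivial extensions of $Q'$ by $Q$ with $Q\not\simeq Q'$ in which all sections lift; by Lemma \ref{mult} this amounts to showing $h^0(Q\otimes Q')=4$. This does not follow from the gonality sequence: $\deg(Q\otimes Q')=2d_1=g+2\ge d_4$ for $g\ge 10$ even, so $h^0(Q\otimes Q')\ge 5$ is not excluded a priori. The paper imports the equality $h^0(Q\otimes Q')=4$ from Voisin \cite[Proposition 4.1]{v}; without some such input your argument does not establish that type (3) is empty for $g$ even. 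Second, the theorem asserts that the bundles in (4) are \emph{stable}, and you do not prove this. The paper's argument is short but necessary: any quotient line bundle $L$ of such an $E$ has $h^0(L)\ge s+1$ (since every line subbundle has $h^0\le1$), hence $d_L\ge d_s$; if $E$ were strictly semistable then $d_E\ge 2d_s$, i.e. $\gamma_1+s\ge d_s$, contradicting $d_s\ge d_1+s-1=\gamma_1+s+1$. Both points should be added to make the proposal complete.
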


\begin{proof}
We have $\gamma_2 < \gamma'_2$ by $(2.1)$ and $(7.1)$. Clearly the bundles of type $(1)$ compute $\gamma'_2=\gamma_1$.

If $E$ computes $\gamma'_2$ and $E$ has a subbundle $M$ with $h^0(M) \geq 2$, then by Lemma \ref{lempr2} we must 
have an extension $0 \ra M \ra E \ra N \ra 0$ with $\gamma(M) = \gamma(N) = \gamma_1$ and all sections of $N$ lift to $E$.
In view of Lemma \ref{lem7.3} the only possibilities are types (2) and (3). 

For $g \geq 10$ even, it follows from  Lemmas \ref{mult} and \ref{lem2.10} that there exists no non-trivial extension 
of type (3) with $Q \simeq Q'$. If $Q \not \simeq Q'$, it follows from \cite[Proposition 4.1]{v} that $h^0(Q \otimes Q') = 4$ 
and Lemma \ref{mult} applies again. For $g$ odd, the existence of non-trivial extensions when $Q \simeq Q'$ follows 
again from Lemmas \ref{mult} and \ref{lem2.10}. 

If $E$ computes $\gamma'_2$ and $E$ does not admit a subbundle with $h^0 \geq 2$, then $h^0(E) = 2+s$ with $s \geq 2,
\; d_E = 2\gamma_1 + 2s \leq 2g-2$ and $ d_E \geq d_{2s}$ by Lemma \ref{lempr}.
Any quotient line bundle $L$ of $E$ must have $h^0(L) \geq s+1$. So $d_L \geq d_s$. If $E$ is strictly semistable, then
$d_E \geq 2d_s$, giving $\gamma_1 + s \geq d_s$. Since $\gamma_1 = d_1 -2$, this contradicts the fact that $d_s \geq d_1 + s - 1$.
Hence $E$ is stable.

It remains to prove \eqref{eq*}.
By \eqref{dr} we have  
$$
2 \left[ \frac{g-1}{2} \right] + 2s \geq g + 2s - \left[ \frac{g}{2s+1} \right],
$$
which is equivalent to the second inequality of \eqref{eq*}.
\end{proof}

\begin{rem} \label{r7.5}
Semistable bundles of type (2) exist when $Q \simeq Q'$ and $g = 7, \;9$ or $11$ by dimensional calculations 
using Lemma \ref{l2.3}.
Such bundles do not exist when $Q \simeq Q'$ and $g \geq 10,\; g \neq 11$, by \cite[Propositions 4.2 and 4.3]{v}. 
\end{rem}

\begin{rem} \label{r7.6}
For all odd $g$, there exist bundles of type (4) with $s=2$ \cite[Theorem 1.1]{fo} (see also \cite[postscript]{gmn}). Provided that $\gamma_2'=\gamma_1$, these bundles compute $\gamma_2'$.
\end{rem}

\begin{prop} \label{prop7.6}
Let $C$ be a general curve of genus $7$. Bundles of type $(4)$ exist for $s=3$. 
\end{prop}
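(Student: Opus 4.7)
For $g=7$ and $s=3$, any bundle $E$ of type (4) has rank $2$, degree $d_E=2\gamma_1+2s=12=2g-2$ and $h^0(E)=5$; Lemma~\ref{lempr} combined with Clifford's theorem forces $\det E\simeq K$. I would work inside the moduli $SU_C(2,K)$ of dimension $3g-3=18$ and consider the Brill--Noether locus
\[
\mathcal{B}=\{E\in SU_C(2,K):h^0(E)\ge 5\}.
\]
A useful parity observation: any semistable $E$ with $d_E=12$ admitting a line subbundle $L$ of degree $6$ is S-equivalent to $L\oplus KL^*$, which has $h^0=2h^0(L)$. On a general genus-$7$ curve, $d_2=7$, so $h^0(L)\le 2$ for $\deg L=6$, hence $h^0(E)\le 4$; consequently $\mathcal{B}$ contains only stable bundles, and for any line subbundle $L\subset E\in\mathcal{B}$ with $h^0(L)\ge 2$ we necessarily have $d_L=5$.

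The proof then has two main steps. First, I would show $\dim\mathcal{B}\ge 3$. Since $E\simeq E^*\otimes K$, the relevant Petri map factors as $S^2H^0(E)\to H^0(S^2 E)$, of dimensions $15$ and $18$; its generic injectivity on a Petri curve of genus $7$ yields codimension at most $15$, hence $\dim\mathcal{B}\ge 3$. Non-emptiness of $\mathcal{B}$ is provided by the Lazarsfeld--Mukai construction on a K3 surface $(X,H)$ containing $C\in|H|$ with $H^2=12$, available by Mukai for a general genus-$7$ curve.

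Secondly, I would bound the sublocus $\mathcal{B}_0\subset\mathcal{B}$ of bundles with a line subbundle $L$ having $h^0(L)\ge 2$; by the parity observation, $L\in W^1_5(C)$, a variety of dimension $\rho(7,1,5)=1$. For fixed $L$, the extensions $0\to L\to E\to K L^*\to 0$ with $h^0(E)=5$ form the kernel of the connecting map
\[
\phi_L:\mathrm{Ext}^1(K L^*,L)\longrightarrow\Hom(H^0(K L^*),H^1(L)),
\]
where $\mathrm{Ext}^1(K L^*,L)=H^1(L^2 K^{-1})$ has dimension $8$ (degree $-2$, Riemann--Roch). Serre duality identifies $\phi_L^*$ with the multiplication $S^2 H^0(K L^*)\to H^0(K^2 L^{-2})$, of dimensions $6$ and $8$. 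This multiplication is injective on a general curve, because $|K L^*|$ maps $C$ birationally to an irreducible plane septic, which lies on no conic. Hence $\mathrm{rk}\,\phi_L=6$, $\dim\ker\phi_L=2$, and $\mathbb{P}(\ker\phi_L)\cong\mathbb{P}^1$ parametrises the resulting $E$ up to isomorphism. Varying $L\in W^1_5(C)$ gives $\dim\mathcal{B}_0\le 1+1=2$.

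Since $\dim\mathcal{B}\ge 3>2\ge\dim\mathcal{B}_0$, the complement $\mathcal{B}\setminus\mathcal{B}_0$ is non-empty, and every $E$ in it is a stable bundle of type (4). The main obstacle is securing the two Petri-type injectivities on a general genus-$7$ curve: namely, that of the fixed-determinant Petri map $S^2 H^0(E)\to H^0(S^2 E)$ (needed for $\dim\mathcal{B}\ge 3$), and that of the multiplication $S^2 H^0(K L^*)\to H^0(K^2 L^{-2})$ for $L\in W^1_5(C)$ (needed for $\dim\mathcal{B}_0\le 2$). Both are of standard Petri type but require careful verification.
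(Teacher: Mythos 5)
Your route is genuinely different from the paper's. The paper constructs the bundles directly: it takes a general effective line bundle $M$ of degree $3$ with $h^0(M)=1$, notes that $S^2H^0(K\otimes M^*)$ has dimension $10$ while $h^0(K^2\otimes M^{*2})=12$, so that by Lemma \ref{l2.3} there is a non-trivial extension $0\ra M\ra E\ra K\otimes M^*\ra 0$ with all four sections of $K\otimes M^*$ lifting (hence $h^0(E)=5$), and then rules out instability for general $M$ by a dimension count on the possible destabilising line bundles $M'$ of degree $7$ with $h^0(M')=3$. You instead work globally in $SU_C(2,K)$, comparing the Brill--Noether locus $\mathcal{B}$ with the sublocus $\mathcal{B}_0$ of bundles carrying a $g^1_5$ subbundle. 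Your treatment of $\mathcal{B}_0$ is sound: $K\otimes L^*$ is base-point-free (since $d_2=7$) and $7$ is prime, so $|K\otimes L^*|$ is birational onto an irreducible plane septic, the multiplication $S^2H^0(K\otimes L^*)\ra H^0(K^2\otimes L^{*2})$ is injective, $\dim\ker\phi_L=2$ and $\dim\mathcal{B}_0\le 2$. The paper's proof is more elementary and self-contained; yours, once repaired as below, gives the extra information that the type (4) bundles are dense in $\mathcal{B}$.

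The one real problem is your justification of $\dim\mathcal{B}\ge3$. Injectivity of the symmetric Petri map $S^2H^0(E)\ra H^0(S^2E)$ at a point $E$ would make $\mathcal{B}$ smooth of codimension \emph{exactly} $15$ there, i.e.\ it yields the \emph{upper} bound $\dim\mathcal{B}\le3$; it cannot produce the lower bound you need, so the first of your two declared ``main obstacles'' is both spurious and pointed in the wrong direction. What you should invoke is the Mukai/Bertram--Feinberg bound: at any stable $E$ with $\det E\simeq K$ and $h^0(E)\ge k$, every component of $\{h^0\ge k\}$ through $E$ has dimension at least $3g-3-\binom{k+1}{2}=3$; this follows from the symmetric determinantal structure alone and needs no Petri-type hypothesis (your parity observation guarantees all points of $\mathcal{B}$ are stable, so the bound applies to every component). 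With that substitution the inequality $\dim\mathcal{B}\ge3>2\ge\dim\mathcal{B}_0$ closes the argument. You can then also drop the appeal to K3 surfaces and Lazarsfeld--Mukai bundles: since $\dim\ker\phi_L=2>0$, non-split extensions of $K\otimes L^*$ by $L\in W^1_5(C)$ with all sections lifting exist, and any such extension is stable (a destabilising line subbundle would have degree $\ge7$, hence map isomorphically to $K\otimes L^*$ and split the sequence), so $\mathcal{B}\supset\mathcal{B}_0\neq\emptyset$ already follows from your own computation.
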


\begin{proof} 
If $s = 3$, we have $\gamma_1 = 3$ and $d_E = 12$.
By Lemma \ref{lempr}, $h^0(\det E) \geq 7$. So $\det E \simeq K$. By \cite{ms} the semistable $E$ possesses a 
subbundle $M$ of degree $\geq 3$. Considering cases, we see that we must have an extension
\begin{equation} \label{eq7.1}
0 \ra M \ra E \ra K \otimes M^* \ra 0
\end{equation}
with $h^0(M) = 1, \; d_M = 3, \; h^0(K \otimes M^*) = 4$ and all sections of $K \otimes M^*$ lift to $E$. By Lemma \ref{l2.3} 
and the fact that $h^0(K^2 \otimes {M^*}^2) = 12$ by Riemann-Roch, we see that there exists such an extension 
for every $M$.

Suppose that $E$ is not semistable. By considering cases, we see that there exists an extension
$$
0 \ra M' \ra E \ra K \otimes M'^* \ra 0
$$
with $d_{M'} = 7$ and $h^0(M') = 3$. Hence there exists a nonzero homomorphism $M \ra K \otimes M'^*$. 
Since $K \otimes M'^*$ is a generated line bundle of degree $5$ with $h^0(K \otimes M'^*) = 2$, there is at most
a 1-dimensional system of such line bundles $M$ for any fixed $M'$. Moreover, $M'$ belongs to the Brill-Noether 
locus of line bundles of degree 7 with $h^0 \geq 3$ which on a general curve of genus 7 has dimension 1.  
So the system of $M$ for which such an $M'$ exists has dimension at most 2. 
Hence for a general $M$ for which an extension \eqref{eq7.1} exists there is no such $M'$.
\end{proof}

\begin{prop} \label{prop7.7}
Let $C$ be a general curve of genus $9$. Then bundles of type $(4)$ with $s = 4$ exist.
\end{prop}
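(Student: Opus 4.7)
The plan is to mimic the construction of Proposition~\ref{prop7.6}. On a general curve of genus $g=9$ one has $\gamma_1=4$ and $d_1=6$, so any bundle of type~(4) with $s=4$ must have $d_E=2\gamma_1+2s=16=2g-2$ and $h^0(E)=6$. Lemma~\ref{lempr} then gives $h^0(\det E)\geq 2s+1=9=g$, so Clifford's theorem forces $\det E\simeq K$.

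I would let $M$ be a general line bundle of degree $4$. Since $d_1=6>4$, the Brill--Noether locus $W^1_4$ is empty, so $h^0(M)=1$, and Riemann--Roch gives $h^0(K\otimes M^*)=5$. I realise $E$ as a non-trivial extension
\begin{equation*}
0\to M\to E\to K\otimes M^*\to 0
\end{equation*}
in which every section of $K\otimes M^*$ lifts, so that $h^0(E)=6$ and $\det E=K$. By Lemma~\ref{l2.3}, existence of such extensions is equivalent to non-surjectivity of the multiplication map
\begin{equation*}
\alpha\colon H^0(K\otimes M^*)\otimes H^0(K\otimes M^*)\longrightarrow H^0(K^2\otimes M^{*\,2}).
\end{equation*}
The image of $\alpha$ factors through $S^2H^0(K\otimes M^*)$, of dimension $\binom{6}{2}=15$, while $K^2\otimes M^{*\,2}$ has degree $24\geq 2g-1$, so $h^0(K^2\otimes M^{*\,2})=16$ by Riemann--Roch. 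Hence $\alpha$ has cokernel of dimension at least~$1$, and good extensions form a family of dimension $\geq 1$.

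The final step is to check that, for generic $M$ and a generic good extension, $E$ is stable and has no line subbundle $N$ with $h^0(N)\geq 2$. Any line subbundle $N\subset E$ with $N\not\simeq M$ injects into $K\otimes M^*$, so $N\simeq K\otimes M^*(-D)$ for an effective divisor $D$; the case $\deg D=0$ would split the sequence and is excluded. Destabilization requires $d_N\geq 9$, i.e.\ $\deg D\leq 3$, while a subbundle with $h^0(N)\geq 2$ forces $d_N\geq d_1=6$, i.e.\ $\deg D\leq 6$. A case analysis using the Brill--Noether dimensions
\begin{equation*}
\dim W^1_6=1,\quad \dim W^2_9=3,\quad \dim W^3_{10}=1,\quad \dim W^4_{12}=4
\end{equation*}
on a general curve of genus~$9$ restricts the possible $N$ to a handful of types (for example, a destabilizing $N$ of degree $10$ forces $N\simeq K\otimes Q^*$ for a gonal bundle $Q$, which amounts to $M\simeq Q(-D')$ with $D'\in C_2$ and so confines $M$ to a subvariety of $\Pic^4(C)$ of dimension $\leq 3$). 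For each candidate the lifting of $N\hookrightarrow K\otimes M^*$ to a subbundle of $E$ is a single linear condition on the extension class, and a dimension count parallel to the one at the end of the proof of Proposition~\ref{prop7.6} shows that the bad locus is proper in the parameter space of good extensions.

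The hard part is this last dimension count. One must enumerate the candidate subbundles $N=K\otimes M^*(-D)$ as $\deg D$ ranges over $\{1,\ldots,6\}$, use Brill--Noether theory on the general genus-$9$ curve to verify that they impose proper conditions on $M\in\Pic^4(C)$, and check that the corresponding lifting hyperplanes meet the $1$-parameter family of good extensions in a proper subset, so that a sufficiently general good extension yields a stable $E$ of type~(4).
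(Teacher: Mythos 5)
Your proposal follows essentially the same route as the paper: the paper's proof of Proposition~\ref{prop7.7} simply observes that the argument of Proposition~\ref{prop7.6} carries over with $d_M=4$, $h^0(K\otimes M^*)=5$ and $h^0(K^2\otimes {M^*}^2)=16$, so that $\dim S^2H^0(K\otimes M^*)=15<16$ and Lemma~\ref{l2.3} yields the required extension with all sections lifting. Your concluding stability discussion, though left as a sketch, is exactly the kind of Brill--Noether dimension count the paper performs at the end of the proof of Proposition~\ref{prop7.6}, which is the intended (and equally unelaborated) completion in the genus $9$ case.
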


\begin{proof}
The proof is similar to the proof of Proposition \ref{prop7.6}. In this case, $d_M = 4, \; h^0(K \otimes M^*) = 5$ 
and $h^0(K^2 \otimes {M^*}^2) = 16$. So Lemma \ref{l2.3} applies again.
\end{proof}

\begin{rem}
If $g =9$ and $s=3$, we are not able to decide whether any bundles of type (4) exist. If $g$ is odd $\geq 11$, 
the argument of Propositions \ref{prop7.6} and \ref{prop7.7} no longer works, even in the case $s = \frac{g-1}{2}$.
\end{rem}

\begin{prop}\label{prop7.10}
Let $C$ be a general curve of genus $10$. Then $\gamma_2 < \gamma'_2 = \gamma_1$ and the only bundles computing $\gamma_2'$ are the bundles $Q\oplus Q'$, where $Q$ and $Q'$ are bundles of degree $6$ with $h^0=2$.
\end{prop}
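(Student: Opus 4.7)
My plan is to compute the gonality sequence of the general curve $C$, apply Theorem~\ref{prop7.5}, and rule out all cases except~(1). By \eqref{eqdr} we have $d_1=6$, $d_2=9$, $d_3=11$ and $d_4=12$, while $\gamma_1=\lfloor (g-1)/2 \rfloor=4$; then \eqref{eq4.1} gives $\gamma_2=d_2/2-1=7/2$ and $\gamma_2'\geq\min\{\gamma_1,d_4/2-2\}=4$, so $\gamma_2<\gamma_2'=\gamma_1=4$. Any direct sum $Q\oplus Q'$ of line bundles in $W^1_6(C)$ plainly computes $\gamma_2'$. Among the four possibilities in Theorem~\ref{prop7.5}, case~(3) is excluded because $g=10$ is even, while case~(4) is constrained by \eqref{eq*} to $s=2$, $h^0(E)=4$, $d_E=d_4=12$, so by Lemma~\ref{lempr} one has $\det E\in W^4_{12}(C)$ with $h^0=5$.

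To eliminate case~(4), I would observe that every such $E$ arises from the Paranjape--Ramanan construction attached to a pair $(M,V)$ with $M\in W^4_{12}(C)$ and $V\subset H^0(M)$ a $3$-dimensional generating subspace; a standard cohomological computation (in the spirit of Theorem~8.3 of the paper's final section) shows that $h^0(E)\geq 4$ is equivalent to the multiplication map $S^2 H^0(M)\to H^0(M^2)$ having a nonzero kernel. Here $\dim S^2 H^0(M)=15$ and $h^0(M^2)=24-g+1=15$ by Riemann--Roch since $\deg M^2=24>2g-2$, so injectivity and surjectivity coincide. On a general curve of genus~$10$ the Brill--Noether number is $\rho(10,4,12)=0$, and by known results on the maximal rank conjecture in this range the map is an isomorphism for every $M\in W^4_{12}(C)$, which rules out case~(4).

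To eliminate case~(2), Lemma~\ref{l2.3} equates existence with non-surjectivity of
$$\alpha\colon H^0(K\otimes Q'^*)\otimes H^0(K\otimes Q^*)\longrightarrow H^0(K^2\otimes Q^*\otimes Q'^*),$$
a map of dimensions $25\to 15$. When $Q\simeq Q'$, the image factors through $S^2H^0(K\otimes Q^*)$ of dimension $15$, and Voisin's Propositions~4.2 and~4.3 of \cite{v}, cited in Remark~\ref{r7.5}, show this restricted map is an isomorphism for $g=10$. When $Q\not\simeq Q'$, one combines Voisin's Proposition~4.1 of \cite{v} (giving $h^0(Q\otimes Q')=4$, equivalently $h^0(K\otimes Q^*\otimes Q'^*)=1$ by Riemann--Roch) with the base-point-free pencil trick applied successively to $Q$ and to $Q'$, together with the Petri property, to force $\alpha$ to be surjective. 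The main obstacle lies precisely in this $Q\not\simeq Q'$ case, since the Remark following Theorem~\ref{prop7.5} only addresses $Q\simeq Q'$: a single pencil trick yields only a partial surjectivity bound on $\alpha$, so closing the argument demands careful chasing in a commutative diagram of multiplication maps (with the Petri isomorphism $H^0(Q)\otimes H^0(K\otimes Q^*)\cong H^0(K)$ playing a central role).
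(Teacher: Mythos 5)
Your overall strategy coincides with the paper's: compute the numerics from \eqref{eqdr} and \eqref{eq4.1} (your values $d_1=6$, $d_4=12$, $\gamma_1=4$, $\gamma_2=\frac72<\gamma_2'=4$ are all correct), reduce to the four cases of Theorem \ref{prop7.5}, discard type (3) because $g$ is even, and then kill types (2) and (4). The problem is that you do not actually close either of the two remaining cases. For type (2) with $Q\not\simeq Q'$ you yourself concede that the base-point-free pencil trick ``yields only a partial surjectivity bound'' and that the argument still ``demands careful chasing''; note that Lemma \ref{mult} is of no help here because the quotient $K\otimes Q'^*$ is not a pencil ($h^0=5$), so there is no off-the-shelf tool in the paper for this $25\to15$ map. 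This is a genuine gap: the paper closes exactly this point by citing the postscript of \cite{gmn}, where the surjectivity of $H^0(K\otimes Q'^*)\otimes H^0(K\otimes Q^*)\to H^0(K^2\otimes Q^*\otimes Q'^*)$ is established for the general genus-$10$ curve (your treatment of the subcase $Q\simeq Q'$ via \cite[Propositions 4.2 and 4.3]{v} does agree with Remark \ref{r7.5}).

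For type (4), your reduction to the injectivity of $S^2H^0(M)\to H^0(M^2)$ for $M\in W^4_{12}(C)$ is the right idea (and the dimension count $15\to15$ with $\rho=0$ is correct), but the concluding appeal to ``known results on the maximal rank conjecture in this range'' is not a proof; the statement that \emph{every} one of the finitely many $M\in W^4_{12}(C)$ on a general genus-$10$ curve fails to lie on a quadric is precisely the nontrivial content of \cite[Theorem 4.1(i)]{gmn}, which is what the paper cites. There is also a boundary subtlety you pass over: the correspondence of Theorem \ref{thm8.2} is stated under $d_4<2d_1$, whereas here $d_4=12=2d_1$, so if you want to run the Paranjape--Ramanan correspondence rather than cite \cite{gmn} directly you must check that the relevant implications survive at this boundary. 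In short: correct skeleton and correct arithmetic, but both of the substantive non-existence statements are left resting on either an admitted incomplete computation or an unspecified external result.
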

\begin{proof} 
In this case, if $E$ is a bundle of type (4), we must have $s=2$ and $d_E = 12$. There do not exist
any stable bundles $E$ of this type by \cite[Theorem 4.1(i)]{gmn}. For bundles of type (2), it follows from \cite[postscript]{gmn} that
$$
H^0(K \otimes Q'^*) \otimes H^0(K \otimes Q^*) \ra H^0(K^2 \otimes Q^* \otimes Q'^*)
$$ is always surjective. Hence, by Lemma \ref{l2.3}, there are no extensions of type (2). This leaves only type (1).
\end{proof}

\section{Curves with $\gamma'_2 < \gamma_1$}

We have already noted that a general curve of genus $g \leq 16$ has $\gamma'_2 = \gamma_1$ and it is 
conjectured that this holds for general curves of arbitrary genus. However there are examples of curves of any genus $g \geq 11$ 
for which $\gamma'_2 < \gamma_1$ (see \cite{fo}).

In this section we shall refer to stable bundles $E$ with $h^0(E) = 2 + s, \; s \geq 2$, not possessing a line subbundle 
with $h^0 \geq 2$ as 
{\it bundles of type PR}.

We begin by considering bundles computing $\gamma_2$.

\begin{prop}\label{prop8.2}
Suppose $\gamma_2'<\gamma_1$. Then $\gamma_2=\frac{d_2}2-1$ and the bundles computing $\gamma_2$ but not $\gamma_2'$ are precisely the bundles $E_L$ where $L$ is a line bundle with $d_L=d_2$ and $h^0(L)=3$.
\end{prop}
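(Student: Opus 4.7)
The plan is to reduce everything to Corollary \ref{cor2}, whose hypothesis $d_2 \le 2\gamma_1 + 2$ will turn out to be forced by the assumption $\gamma_2' < \gamma_1$.

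First I would observe that $\gamma_2 \le \gamma_2'$: any bundle contributing to $\gamma_2'$ satisfies $h^0(E) \ge 4 \ge 3$ and hence contributes to $\gamma_2$, so the minimum defining $\gamma_2$ is over a larger family. Combining this with the hypothesis gives $\gamma_2 \le \gamma_2' < \gamma_1$. Then I invoke the formula $\gamma_2 = \min\{\gamma_1, \tfrac{d_2}{2}-1\}$ from \eqref{eq4.1}: since the minimum is strictly less than $\gamma_1$, it must be attained by the second argument, yielding $\gamma_2 = \tfrac{d_2}{2} - 1$, the first assertion of the proposition.

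Next, from $\tfrac{d_2}{2} - 1 = \gamma_2 < \gamma_1$ I get $d_2 < 2\gamma_1 + 2$, and in particular $d_2 \le 2\gamma_1 + 2$. This is exactly the hypothesis of Corollary \ref{cor2}, which immediately identifies the bundles computing $\gamma_2$ but not $\gamma_2'$ as precisely the bundles $E_L$ with $d_L = d_2$ and $h^0(L)=3$. So the proof is essentially a two-line deduction.

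No genuine obstacle is expected here: the proposition is an immediate corollary of the preliminary material, included for convenient reference at the start of Section 8. The only point requiring a moment's care is the monotonicity $\gamma_2 \le \gamma_2'$, which follows directly from the definitions.
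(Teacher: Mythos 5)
Your proposal is correct and follows essentially the same route as the paper: the paper likewise deduces $\gamma_2=\frac{d_2}{2}-1<\gamma_1$ from \eqref{eq4.1} and then cites Corollary \ref{cor2}. Your extra step of making explicit why the minimum in \eqref{eq4.1} is attained at $\frac{d_2}{2}-1$ (via $\gamma_2\le\gamma_2'<\gamma_1$) and why the hypothesis $d_2\le 2\gamma_1+2$ of Corollary \ref{cor2} holds is exactly the intended, if unstated, content of the paper's two-line argument.
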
\begin{proof}
It follows from \eqref{eq4.1} that $\gamma_2=\frac{d_2}2-1<\gamma_1$. The result now follows from Corollary \ref{cor2}.
\end{proof}

We turn now to the consideration of bundles computing $\gamma_2'$.

\begin{prop} \label{prop8.1}
Suppose $\gamma'_2 < \gamma_1$. Then all bundles computing $\gamma'_2$ are of type PR with 
$$
2 \leq s \leq \gamma'_2 - \frac{\gamma_1}{2}.
$$
\end{prop}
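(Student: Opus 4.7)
The plan is to verify the three features of type PR---stability, $s\ge 2$, and no line subbundle with $h^0\ge 2$---in turn, then extract the upper bound on $s$ from Lemma \ref{lempr} combined with \eqref{eq0}. Two of the features are essentially immediate: since $E$ contributes to $\gamma_2'$, $h^0(E)\ge 4$ so $s\ge 2$; and if $E$ had a line subbundle $M$ with $h^0(M)\ge 2$, Lemma \ref{lempr2} (applicable since $E$ is semistable with $\mu(E)\le g-1$, hence $d_E\le 2g-2$) would force $\gamma(E)\ge\gamma_1$, contradicting $\gamma(E)=\gamma_2'<\gamma_1$.

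The main---indeed only---real obstacle is ruling out strict semistability. I will suppose $E$ is strictly semistable, pick a Jordan--H\"older filtration $0\to M\to E\to N\to 0$ with $d_M=d_N=\mu(E)\le g-1$, and run the following bookkeeping. By the previous step $h^0(M)\le 1$, so $h^0(N)\ge h^0(E)-1=s+1\ge 3$; in particular $N$ contributes to $\gamma_1$, giving $d_N\ge \gamma_1+2h^0(N)-2$. The identity $\gamma(E)=d_N-h^0(E)+2$, combined with $h^0(E)\le 1+h^0(N)$, then yields
\[
\gamma(E)\ \ge\ d_N-h^0(N)+1\ \ge\ \gamma_1+h^0(N)-1\ \ge\ \gamma_1+2,
\]
again contradicting $\gamma(E)=\gamma_2'<\gamma_1$. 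So $E$ is stable, and together with the preceding paragraph this gives that $E$ is of type PR.

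For the bound on $s$: Lemma \ref{lempr} now applies and gives $d_E\ge d_{2s}$, while $\gamma(E)=\gamma_2'$ gives $d_E=2\gamma_2'+2s$, so $d_{2s}\le 2\gamma_2'+2s$. Invoking \eqref{eq0}, $d_{2s}\ge\min\{\gamma_1+4s,\, g+2s-1\}$. The alternative $d_{2s}\ge g+2s-1$ would force $\gamma_2'\ge (g-1)/2\ge \gamma_1$ by Clifford's theorem, contradicting the standing hypothesis $\gamma_2'<\gamma_1$; so the other alternative must hold, i.e. $d_{2s}\ge \gamma_1+4s$, which rearranges to $s\le \gamma_2'-\gamma_1/2$, as required.
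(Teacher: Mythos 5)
Your proof is correct and follows essentially the same route as the paper: Lemma \ref{lempr2} excludes line subbundles with $h^0\ge 2$, Lemma \ref{lempr} together with \eqref{eq0} yields $s\le\gamma_2'-\frac{\gamma_1}{2}$ after discarding the branch $d_{2s}\ge g+2s-1$ via $\gamma_1\le\frac{g-1}{2}$, and stability is checked separately. The only (immaterial) differences are that the paper disposes of the unwanted branch using the explicit bound $d_{2s}\le g+2s-\left[\frac{g}{2s+1}\right]$, and proves stability by comparing a quotient line bundle's degree with $d_s$ (as in Theorem \ref{prop7.5}) rather than by your Jordan--H\"older bookkeeping, which is equally valid.
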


\begin{proof}
If $E$ is a semistable bundle computing $\gamma'_2$ and possessing a line bundle $M$ with $h^0(M) \geq 2$, then by 
Lemma \ref{lempr2}, $\gamma(E) \geq \gamma_1$, a contradiction. So $E$ is of type PR
and 
$$
d_{2s} \leq d_E \leq 2g-2.
$$
Since $d_{g-1} = 2g-2$, this implies that $2s \leq g-1$. By \eqref{eq0} we have 
$$
d_{2s} \geq \min \{ \gamma_1 +4s, g+2s -1 \}.
$$
If $g-1- \gamma_1 \leq 2s \leq g-1$, this gives $d_{2s} \geq g+2s -1$, but 
$$
d_{2s} \leq g+2s - \left[ \frac{g}{2s+1} \right] = g+2s - 1.
$$ So $d_{2s} = g+2s -1$ and $\gamma'_2 = \gamma(E) \geq \frac{g-1}{2}$,
a contradiction.

Hence $2s < g-1-\gamma_1$ and $d_{2s} \geq \gamma_1 + 4s$ which implies 
$$
\gamma'_2 = \gamma(E) \geq \frac{\gamma_1 +2s}{2} = \frac{\gamma_1}{2} +s.
$$
So $s \leq \gamma'_2 - \frac{\gamma_1}{2}$.
Stability of $E$ follows as in the proof of Theorem \ref{prop7.5}, since $d_1 - 2 \geq \gamma_1$.
\end{proof} 

\begin{theorem} \label{thm8.2}
Suppose $\gamma'_2 < \gamma_1$ and $d_4 = 2 \gamma'_2 + 4$. Then the set of bundles 
of type PR with $s=2$ which compute $\gamma'_2$ is in bijective correspondence with the set of line bundles
$$
U(d_4,5) := \left\{ M \;{\Big |}\; \begin{array}{c}
                      d_M = d_4,\quad  h^0(M) = 5,\\
                      S^2H^0(M) \ra H^0(M^2) \; \mbox{not injective}
                      \end{array} \right\}.
$$
\end{theorem}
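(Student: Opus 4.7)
The plan is to exhibit mutually inverse maps between the two sets.

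\emph{From $E$ to $M$.} Given $E$ of type PR with $s=2$ computing $\gamma'_2$, set $M:=\det E$. The equalities $\gamma(E)=\gamma'_2$ and $h^0(E)=4$, combined with the hypothesis $d_4=2\gamma'_2+4$, give $d_M=d_E=d_4$. Lemma \ref{lempr} yields $h^0(M)\geq 2s+1=5$, and $h^0(M)=5$ follows because $d_5>d_4$. For the non-injectivity of $S^2H^0(M)\to H^0(M^2)$, consider the wedge map
$$
w\colon \bigwedge^{2} H^0(E)\to H^0(M).
$$
Its source has dimension $6$, its target dimension $5$, and the Paranjape--Ramanan argument underlying Lemma \ref{lempr} shows that the image has dimension $\geq 5$. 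Hence $w$ is surjective with a $1$-dimensional kernel, spanned by some $\omega$. The PR condition on $E$ forces $\omega$ to be non-decomposable (otherwise $\omega=s\wedge t$ would give a line subbundle $\langle s,t\rangle\subset E$ with $h^0\geq 2$). Geometrically, $\phi_M$ factors through the Plücker embedding $\mathrm{Gr}(2,H^0(E)^*)\subset \mathbb{P}(\bigwedge^2 H^0(E)^*)$, and $\phi_M(C)$ lies in the hyperplane section $\mathrm{Gr}(2,H^0(E)^*)\cap \omega^\perp$ inside $\omega^\perp=\mathbb{P}H^0(M)^*$. This intersection is a quadric $Q_\omega\subset\mathbb{P}^4$, smooth precisely because $\omega$ is non-decomposable. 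Its defining form is a non-zero element of $\ker(S^2H^0(M)\to H^0(M^2))$, so $M\in U(d_4,5)$.

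\emph{From $M$ to $E$.} Given $M\in U(d_4,5)$, a non-zero $q\in\ker(S^2H^0(M)\to H^0(M^2))$ defines a quadric $Q\subset\mathbb{P}H^0(M)^*\simeq\mathbb{P}^4$ containing $\phi_M(C)$. When $Q$ is smooth (rank $5$), I set $E:=\phi_M^*\mathcal{S}$ where $\mathcal{S}$ is the rank-$2$ spinor bundle on the $3$-dimensional smooth quadric; standard properties yield $\det E=M$ and $h^0(E)=4$. Stability of $E$ follows as in the proof of Proposition \ref{prop8.1}, and $E$ is of type PR because a smooth $Q\subset\mathbb{P}^4$ contains no $2$-planes, so $E$ admits no line subbundle $L$ with $h^0(L)\geq 2$. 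Finally, $\gamma(E)=\tfrac{1}{2}(d_4-4)=\gamma'_2$ by hypothesis.

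\emph{Bijectivity and main obstacle.} The two constructions are mutually inverse: the wedge-kernel of $E$ yields precisely the smooth quadric $Q_\omega$, and pulling back its spinor bundle recovers $E$. The principal obstacle is the reverse direction. One must show that for every $M\in U(d_4,5)$ some $q$ in the kernel has rank $5$ (so that $Q$ is smooth), since forms of rank $\leq 4$ correspond to quadrics containing $2$-planes and hence to bundles with a line subbundle having $h^0\geq 2$, violating PR; and one must show that the resulting $E$ depends only on $M$ (not on the choice of $q$), giving a bijection with $U(d_4,5)$ rather than with pairs $(M,q)$. Both points should follow by transferring the $1$-dimensionality of $\ker w$ obtained in the forward direction back to $\ker(S^2H^0(M)\to H^0(M^2))$, using ideas parallel to those of \cite{gmn}.
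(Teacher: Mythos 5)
The paper's own proof is a two-line reduction: it observes that any such $E$ has $d_E=2\gamma_2'+4=d_4<2d_1$ and is stable, and then invokes \cite[Theorem 3.2 and Remark 3.4]{gmn}, which is precisely the statement that stable rank-$2$ bundles with $h^0=4$ and determinant of degree $<2d_1$ correspond bijectively to line bundles $M$ with $h^0(M)=5$ and $S^2H^0(M)\to H^0(M^2)$ non-injective. Your proposal instead reconstructs that cited result from scratch via the Pl\"ucker quadric and the spinor bundle on a smooth quadric threefold. The forward direction ($E\mapsto M=\det E$) is essentially correct and matches the standard argument: the wedge map $\bigwedge^2H^0(E)\to H^0(M)$ is surjective with one-dimensional kernel spanned by a non-decomposable $\omega$, and the corresponding smooth hyperplane section of $\mathrm{Gr}(2,4)$ gives the non-trivial element of $\ker\bigl(S^2H^0(M)\to H^0(M^2)\bigr)$.

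However, the two points you flag at the end are genuine gaps, and the fix you suggest --- ``transferring the $1$-dimensionality of $\ker w$ back'' --- is circular: that one-dimensionality was established only for those $M$ already known to arise as $\det E$, which is exactly what the reverse direction must produce. The correct way to close both gaps is the inequality $d_4<2d_1$, which your argument never uses (and which is the one substantive observation in the paper's proof). Concretely: if some non-zero $q\in\ker\bigl(S^2H^0(M)\to H^0(M^2)\bigr)$ had rank $\le4$, the rulings of the corresponding quadric cone would factor $M\simeq L_1\otimes L_2$ with $h^0(L_i)\ge2$, whence $d_M\ge d_{L_1}+d_{L_2}\ge 2d_1>d_4$, a contradiction (here $2d_1\ge 2\gamma_1+4>2\gamma_2'+4=d_4$). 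Thus \emph{every} non-zero element of the kernel has rank $5$; and since any pencil of quadrics in $\PP^4$ contains singular members, the kernel must be exactly one-dimensional, so $q$ is unique up to scalar and $E$ depends only on $M$. With these two points supplied (plus the minor checks that $M$ is automatically generated, since a base point would force $d_4-1\ge d_4$, and that $h^0(E)=4$ exactly, since $h^0(E)\ge5$ would give $\gamma(E)<\gamma_2'$), your construction becomes a complete proof of the cited result of \cite{gmn}; as written, it stops short of one.
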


\begin{proof}
If $E$ is a bundle of type PR with $s =2$ which computes $\gamma'_2$, then
$$
d_E = 2 \gamma'_2 + 4 = d_4 < 2d_1.
$$
Since $E$ is necessarily stable, the result follows from
\cite[Theorem 3.2 and Remark 3.4]{gmn}.
\end{proof}

The following corollary generalises \cite[Proposition 4.5]{lmn}
\begin{cor} \label{cor8.3}
Suppose $\gamma_1\ge5$ and $\gamma'_2 = \frac{\gamma_1}{2} + 2$. Then $U(d_4,5)$ is non-empty and the corresponding bundles of type PR 
are the only bundles that compute $\gamma'_2$.
\end{cor}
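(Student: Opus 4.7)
The plan is to combine Proposition \ref{prop8.1} with Theorem \ref{thm8.2}, after checking that the numerical hypothesis $d_4 = 2\gamma_2'+4$ of the latter is automatic under the assumptions. The inequality $\gamma_1 \ge 5$ immediately gives $\gamma_2' = \gamma_1/2 + 2 < \gamma_1$, so Proposition \ref{prop8.1} applies. Its upper bound $s \le \gamma_2' - \gamma_1/2 = 2$, together with the standing lower bound $s \ge 2$, forces $s = 2$, and every bundle computing $\gamma_2'$ is therefore a stable bundle of type PR with $h^0 = 4$.

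Next I would establish $d_4 = 2\gamma_2' + 4$. By Clifford's theorem $g \ge 2\gamma_1 + 1$, and since $\gamma_1 \ge 5 \ge 4$ we have $2\gamma_1 + 1 \ge \gamma_1 + 5$, so $g + 3 \ge \gamma_1 + 8$. Hence the minimum in \eqref{eq0} with $r = 4$ equals $\gamma_1 + 8$, giving $d_4 \ge \gamma_1 + 8 = 2\gamma_2' + 4$. For the reverse inequality, the right-hand estimate in \eqref{eq4.1} together with $\gamma_2' < \gamma_1$ forces $\gamma_2' \ge d_4/2 - 2$, i.e.\ $d_4 \le 2\gamma_2' + 4$. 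So equality holds.

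Finally I would invoke Theorem \ref{thm8.2}: with $d_4 = 2\gamma_2' + 4$, the type-PR bundles with $s = 2$ computing $\gamma_2'$ are precisely those corresponding to elements of $U(d_4,5)$. Combined with the first paragraph, this shows that these are the only bundles that compute $\gamma_2'$. The non-emptiness of $U(d_4,5)$ then follows from the fact that $\gamma_2'$, being a finite value of a minimum by the hypothesis $\gamma_2' = \gamma_1/2 + 2$, is attained by at least one bundle.

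I anticipate no real obstacle; the corollary is essentially a bookkeeping consequence of the two main results of the section. The only step requiring any genuine calculation is the equality $d_4 = 2\gamma_2'+4$, where Clifford's inequality is used to rule out the alternative branch $g+r-1$ of the minimum in \eqref{eq0}.
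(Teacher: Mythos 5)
Your proposal is correct and follows essentially the same route as the paper: apply Proposition \ref{prop8.1} to force $s=2$, derive $d_4 = 2\gamma_2'+4$ by squeezing between \eqref{eq0} and \eqref{eq4.1}, and conclude via Theorem \ref{thm8.2}. The only difference is that you explicitly rule out the $g+r-1$ branch of the minimum in \eqref{eq0} using Clifford's inequality, a point the paper's proof passes over silently.
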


\begin{proof}
By Proposition \ref{prop8.1}, every bundle $E$ computing $\gamma_2'$ is of type PR with $s=2$. By \eqref{eq4.1}, $\gamma'_2 \geq \frac{d_4}{2} - 2$. So $d_4 \leq 2 \gamma'_2 + 4$.

On the other hand, $d_4 \geq \gamma_1 + 8$ by \eqref{eq0}. So $d_4 \geq 2 \gamma'_2 + 4$ and hence
$$
d_4 = 2 \gamma'_2 + 4.
$$
The result follows from the theorem. 
\end{proof}

\begin{cor} \label{cor8.4}
Suppose $\gamma'_2 < \gamma_1 =5$. Then $U(d_4,5)$ is non-empty and the corresponding bundles of type PR are 
the only bundles that compute $\gamma'_2$. 
\end{cor}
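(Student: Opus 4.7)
The plan is to reduce everything to Theorem 8.2: I need to show that under the hypotheses $\gamma_1 = 5$ and $\gamma'_2 < 5$, every bundle computing $\gamma'_2$ is of type PR with $s = 2$, and that $d_4 = 2\gamma'_2 + 4$, after which Theorem 8.2 applies verbatim.

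First I would apply Proposition \ref{prop8.1}. Any bundle $E$ computing $\gamma'_2$ is of type PR, with $s := h^0(E) - 2$ satisfying
\[
2 \le s \le \gamma'_2 - \tfrac{\gamma_1}{2} < \gamma_1 - \tfrac{\gamma_1}{2} = \tfrac{5}{2},
\]
so $s = 2$ is forced.

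Next I would pin down $d_4$. The inequality \eqref{eq4.1} gives $\gamma'_2 \ge \tfrac{d_4}{2} - 2$, hence $d_4 \le 2\gamma'_2 + 4 < 14$. On the other hand, \eqref{eq0} gives $d_4 \ge \min\{\gamma_1 + 8,\, g+3\} = \min\{13,\, g+3\}$. Clifford's theorem yields $\gamma_1 \le \lfloor (g-1)/2 \rfloor$, so $g \ge 2\gamma_1 + 1 = 11$ and $g + 3 \ge 14 > 13$. Thus $d_4 \ge 13$, and combining with the upper bound, $d_4 = 13$ and $\gamma'_2 = 9/2$. In particular $d_4 = 2\gamma'_2 + 4$, which is the hypothesis of Theorem \ref{thm8.2}.

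Applying Theorem \ref{thm8.2} provides a bijection between bundles of type PR with $s = 2$ that compute $\gamma'_2$ and the set $U(13, 5)$. Since $\gamma'_2 < \gamma_1$ is finite and the admissible values $\gamma(E)$ (for semistable rank-$2$ bundles with $h^0 \ge 4$ and $\mu \le g - 1$) lie in a discrete set bounded below, the minimum is realised by some bundle $E$, which by the first step is necessarily of type PR with $s = 2$; hence $U(13, 5) \ne \emptyset$. Conversely, every bundle computing $\gamma'_2$ falls under the bijection, yielding the corollary. The only step that needs any care is justifying the correct branch $\gamma_1 + 8$ of \eqref{eq0}, which rests on the Clifford bound $g \ge 2\gamma_1 + 1$; after that the argument is essentially a bookkeeping exercise linking Proposition \ref{prop8.1}, \eqref{eq4.1}, \eqref{eq0}, and Theorem \ref{thm8.2}, and I anticipate no serious obstacle.
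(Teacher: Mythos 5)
Your argument is correct and follows essentially the same route as the paper: the paper first deduces $\gamma_2'=\tfrac92=\tfrac{\gamma_1}{2}+2$ from \eqref{eq4.1} and then invokes Corollary \ref{cor8.3}, whose proof is exactly your chain (Proposition \ref{prop8.1} forcing $s=2$, then \eqref{eq4.1} and \eqref{eq0} forcing $d_4=2\gamma_2'+4$, then Theorem \ref{thm8.2}). You have merely inlined that corollary, so there is nothing substantively different to flag.
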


\begin{proof}
In this case $\gamma'_2 = \frac{9}{2} = \frac{\gamma_1}{2} + 2$, since $\gamma'_2 < \gamma_1 = 5$ and $\gamma'_2 \geq \frac{9}{2}$
by \eqref{eq4.1}. So the assertion follows from the previous corollary.
\end{proof}

\begin{prop} \label{prop8.5}
Suppose $\gamma_1 \geq 6$ and $\gamma'_2 = \frac{\gamma_1 + 5}{2}$. Then the bundles computing $\gamma'_2$ are all of type PR 
with $s = 2$ and $d_E = 2 \gamma'_2 + 4$. The set of such bundles is in bijective correspondence with 
$$
U^0(2\gamma'_2 + 4,5) := \{ M \in U(2\gamma'_2 + 4,5) \;|\; M\ generated \}.
$$
\end{prop}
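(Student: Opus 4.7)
The plan is to combine Proposition \ref{prop8.1} with the correspondence from \cite[Theorem 3.2 and Remark 3.4]{gmn} that underlies Theorem \ref{thm8.2}, paying careful attention to the ``generated'' condition distinguishing $U^0$ from $U$.

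First, I would verify the hypothesis of Proposition \ref{prop8.1}. Since $\gamma_1 \geq 6$, the equality $\gamma'_2 = (\gamma_1+5)/2$ gives $\gamma'_2 < \gamma_1$, so that proposition applies. Writing $h^0(E) = 2+s$, the bound $s \leq \gamma'_2 - \gamma_1/2 = 5/2$ combined with $s \geq 2$ forces $s = 2$. The formula $\gamma(E) = d_E/2 - 2$ for a rank-$2$ bundle with $h^0(E) = 4$ then gives $d_E = 2\gamma'_2 + 4 = \gamma_1 + 9$, establishing the first assertion of the proposition.

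Next, I would apply the same correspondence as in Theorem \ref{thm8.2}, which for stable rank-$2$ bundles of type PR with $s = 2$ and $d_E < 2d_1$ associates $E$ with $M := \det E$. Stability of $E$ is included in Proposition \ref{prop8.1}. For the degree bound, $d_1 \geq \gamma_1 + 2$ yields $d_E = \gamma_1 + 9 < 2\gamma_1 + 4 \leq 2d_1$, which holds precisely because $\gamma_1 \geq 6$. To place $M$ in $U^0(2\gamma'_2+4,5)$ I must check three things: $h^0(M) = 5$, $M$ generated, and $S^2H^0(M) \to H^0(M^2)$ not injective. The first follows from $h^0(M) \geq 2s + 1 = 5$ (Lemma \ref{lempr}) combined with $d_5 \geq \gamma_1 + 10 > d_M$ (from \eqref{eq0}, using $g \geq 2\gamma_1 + 1$ which forces $\gamma_1 + 10 \leq g + 4$). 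Generation of $M$ is automatic because by Lemma \ref{lem1} the bundle $E$ is generated, and the determinant of a generated bundle is generated. The non-injectivity condition is the output of the \cite{gmn} construction. Conversely, given $M \in U^0(2\gamma'_2+4,5)$, the \cite{gmn} construction produces a stable rank-$2$ bundle $E$ of type PR with $\det E = M$, $h^0(E) = 4$, and hence $\gamma(E) = \gamma'_2$, so $E$ computes $\gamma'_2$.

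The main subtlety, and the reason the statement must use $U^0$ rather than $U$, is that we have \emph{not} shown $d_4 = 2\gamma'_2 + 4$. Indeed, \eqref{eq0} and \eqref{eq4.1} together give only $\gamma_1 + 8 \leq d_4 \leq \gamma_1 + 9$, and without further information (concerning generation or the multiplication map on line bundles of degree $\gamma_1+8$ with $h^0 = 5$) the case $d_4 = \gamma_1 + 8$ cannot be excluded. In Theorem \ref{thm8.2} this issue does not arise because the hypothesis $d_4 = 2\gamma'_2 + 4$ forces any $M \in U(d_4,5)$ to be generated (else removing a base point would yield a line bundle of degree $< d_4$ with $h^0 = 5$). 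In the present setting we bypass $d_4$ entirely, working with the degree $d_E = 2\gamma'_2 + 4$ directly and using $U^0$ to encode the generation condition explicitly, which is the only mildly delicate point in the argument.
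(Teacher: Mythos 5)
Your proposal is correct and follows essentially the same route as the paper: Proposition \ref{prop8.1} forces $s=2$ (hence $d_E=2\gamma_2'+4$), the bound $d_5\ge\gamma_1+10>2\gamma_2'+4$ from \eqref{eq0} pins down $h^0(\det E)=5$, and the correspondence of \cite[Theorem 3.2 and Remark 3.4]{gmn} gives the bijection. The paper's proof is just a terser version of yours; your explanation of why $U^0$ rather than $U$ must appear (since $d_4=2\gamma_2'+4$ is not guaranteed here) is a correct reading of the point the paper leaves implicit.
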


\begin{proof}
We have $s=2$ by Proposition \ref{prop8.1}.
By \eqref{eq0} we have $d_5 \geq \gamma_1 + 10 > 2 \gamma'_2 + 4$.
Then the argument of \cite[Theorem 3.2 and Remark 3.4]{gmn} gives the assertion. 
\end{proof}

\begin{cor} \label{cor8.6}
Suppose $\gamma'_2 < \gamma_1 = 6$. Then $\gamma_2'=5$ or $\frac{11}2$ and  $d_4 = 14$ or $15$.
\begin{enumerate}
\item If $\gamma'_2 = 5$, then the set of bundles computing $\gamma'_2$ is in bijective correspondence with $U(14,5)$.
\item If $\gamma'_2 = \frac{11}{2}$, then the set of bundles computing $\gamma'_2$ is in bijective correspondence with $U^0(15,5)$.
\end{enumerate}
If $d_4 = 15$, then $\gamma'_2 = \frac{11}{2}$ and $U^0(15,5)=U(15,5)$.
\end{cor}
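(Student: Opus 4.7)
The plan is to pin down the numerical invariants using the bounds already available and then invoke Corollary \ref{cor8.3} and Proposition \ref{prop8.5} directly.

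I would first apply Proposition \ref{prop8.1} with $\gamma_1=6$: any bundle $E$ computing $\gamma'_2$ is of type PR with $h^0(E)=2+s$ and $s\le\gamma'_2-\gamma_1/2<3$, forcing $s=2$ and $d_E=2\gamma'_2+4$. Combining $d_E\ge d_4$ from Lemma \ref{lempr} with the lower bound $d_4\ge\gamma_1+8=14$ from \eqref{eq0} (valid since the Clifford bound on $d_1$ gives $g\ge 2\gamma_1+1\ge 13$, so the minimum in \eqref{eq0} is attained at $\gamma_1+2r$) and the upper bound $d_E<16$ coming from $\gamma'_2<\gamma_1=6$, I would conclude $d_E\in\{14,15\}$, so $\gamma'_2\in\{5,11/2\}$ and $d_4\in\{14,15\}$.

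The two numbered clauses then follow by citation. If $\gamma'_2=5$, then $\gamma'_2=\gamma_1/2+2$, so Corollary \ref{cor8.3} applies and yields the bijection with $U(d_4,5)=U(14,5)$, together with non-emptiness. If $\gamma'_2=11/2$, then $\gamma'_2=(\gamma_1+5)/2$, which puts us exactly in the hypothesis of Proposition \ref{prop8.5}, and that result gives the bijection with $U^0(2\gamma'_2+4,5)=U^0(15,5)$.

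For the final assertion, if $d_4=15$ then $d_E\ge d_4=15$ combined with $d_E\le 15$ forces $d_E=15$, and hence $\gamma'_2=11/2$. To see $U^0(15,5)=U(15,5)$ under this hypothesis, I would argue by contradiction: if $M\in U(15,5)$ were not generated, there would exist $p\in C$ with $h^0(M(-p))=h^0(M)=5$, producing a line bundle of degree $14$ with at least $5$ sections and contradicting $d_4=15$. I do not anticipate a substantive obstacle; the whole corollary is essentially a bookkeeping exercise, the only point requiring attention being the verification that the two arithmetic alternatives $\gamma'_2\in\{5,11/2\}$ match the hypotheses of Corollary \ref{cor8.3} and Proposition \ref{prop8.5} respectively.
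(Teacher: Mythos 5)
Your proposal is correct and follows essentially the same route as the paper: pin down $\gamma'_2\in\{5,\frac{11}{2}\}$ and $d_4\in\{14,15\}$ from Proposition \ref{prop8.1}, Lemma \ref{lempr} and \eqref{eq0}, then quote Theorem \ref{thm8.2} (via Corollary \ref{cor8.3}) and Proposition \ref{prop8.5} for the two cases. Your explicit verification that a non-generated $M\in U(15,5)$ would force $d_4\le 14$ is exactly the (implicit) reason the paper can write $U^0(15,5)=U(15,5)$.
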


\begin{proof}
Since $\gamma'_2 \geq \frac{\gamma_1}{2} + 2$, we must have either $\gamma'_2 = 5$ or $\gamma'_2 = \frac{11}{2}$.
Since $\gamma_1 + 8 \leq d_4 \leq 2\gamma'_2 + 4$, we have $d_4 = 14$ or $15$. 

If $\gamma'_2 = 5$, the result follows from Theorem \ref{thm8.2}. If $\gamma'_2 = \frac{11}{2}$, it follows from Proposition
\ref{prop8.5}.
\end{proof}

\begin{ex}
We know from \cite{fo} that there exist examples of curves of genus 11 and 12 with $\gamma_1 = 5$ and $\gamma'_2 = \frac{9}{2}$. 
So Corollary \ref{cor8.4} applies.
\end{ex}

\begin{ex}
From the same source we know that there exist curves of genus 13 with $\gamma_1 = 6$ and $\gamma'_2 \leq \frac{11}{2}$. 
So Corollary \ref{cor8.6} applies. We are not certain whether there exist curves of genus 13 with $\gamma_1 = 6$ and 
$\gamma'_2 =5$. 
\end{ex}

\begin{ex}
By \cite[Theorem 1.1]{ln2} there exists  a curve of genus 14 with $\gamma_1 = 6$ and $\gamma'_2 = 5$ provided 
that the quadratic form
$$
3m^2 + 14 mn + 13 n^2
$$
cannot take the value $-1$ for any integers $m$ and $n$. 

If the quadratic form does take the value $-1$, then reduction modulo 4 shows that $m$ is odd and $n$ is even. 
Writing $m = 2m_1 + 1$ and $n = 2n_1$ reduces the equation to
$$
3(m_1^2 + m_1) + 7n_1(2m_1 + 1) + 13n_1^2 = -1.
$$
The left hand side of this equation is always even, a contradiction. So the curve exists.
\end{ex}


\begin{thebibliography}{CAV}

\bibitem{acgh} E. Arbarello, M. Cornalba, P. A. Griffiths and J. Harris: 
\emph{Geometry of Algebraic Curves I}. 
Springer, Grundlehren Math. Wiss. 267 (1985). 

\bibitem{elms} D. Eisenbud, H. Lange, G. Martens and F.-O. Schreyer:
\emph{The Clifford dimension of a projective curve}.
Comp. Math. 72 (1989), 173-204.

\bibitem{fo} G. Farkas and A. Ortega:
\emph{The maximal rank conjecture and rank two Brill-Noether theory}.
Pure and Appl. Math. Quarterly 7, no.4 (2011), 1265--1296.


\bibitem{gl} M. Green and R. Lazarsfeld:
\emph{On the projective normality of complete linear series on an algebraic curve}.
Invent. Math. 83 (1986), 73-90. 

\bibitem{gmn} I. Grzegorczyk, V. Mercat and P. E. Newstead:
\emph{Stable bundles of rank 2 with 4 sections}.
Internat. J. Math. 22, no.12 (2011), 1743--1762, doi:10.1142/S0129167X11007434 .

\bibitem{k} S. Kim: 
\emph{On the Clifford sequence of a general $k$-gonal curve}. 
Indag. Mathem. 8 (1997), 209-216.

\bibitem{lmn} H. Lange, V. Mercat and P. E. Newstead:
\emph{On an example of Mukai}.
Glasgow Math. J. (2011), doi:10.1017/S0017089511000577.

\bibitem{cl} H. Lange and P. E. Newstead: 
\emph{Clifford Indices for Vector Bundles on Curves}.
In: A. Schmitt (Ed.) Affine Flag Manifolds and Principal Bundles. Trends in Mathematics 165-202, Birkh\"auser (2010).
\bibitem{ln} H. Lange and P. E. Newstead: 
\emph{Generation of vector bundles computing Clifford indices}.
Arch. Math. 94 (2010), 529-537.
\bibitem{ln2} H. Lange and P. E. Newstead: 
\emph{Further examples of stable bundles of rank 2 with 4 sections}.
Pure and Appl. Math. Quarterly 7, no.4 (2011), 1517--1528.

\bibitem{m} G. Martens: 
\emph{\"Uber den Clifford Index algebraischer Kurven}.
J. Reine Angew. Math. 336 (1982), 83-90. 

\bibitem{mer} V. Mercat:
\emph{Clifford's theorem and higher rank vector bundles}.
Int. J. Math. 13 (2002), 785-796.


\bibitem{ms} S. Mukai and F. Sakai:
\emph{Maximal subbundles of vector bundles on a curve}.
Manuscr. Mathem. 52 (1985), 251-256.

\bibitem{pr} K. Paranjape and S. Ramanan: 
\emph{On the canonical ring of a curve}.
Algebraic Geometry and Commutative Algebra in Honor of Masayoshi Nagata (1987), 503-516. 


\bibitem{re} R. Re:
\emph{Multiplication of sections and Clifford bounds for stable vector bundles on curves}.
Comm. in Alg. 26 (1998), 1931-1944. 

\bibitem{v} C. Voisin:
\emph{Sur l'application de Wahl des courbes satisfaisant la condition de Brill-Noether-Petri}.
Acta Math. 168 (1992), 249-272.

\end{thebibliography}
\end{document}